\newtheorem{lemma}{Lemma}
\newtheorem{theorem}{Theorem}
\newtheorem{proposition}{Proposition}
\newtheorem{assumption}{Assumption}
\newtheorem{remark}{\indent {Remark}}
\newtheorem{example}{Example}
\numberwithin{theorem}{section} 
\numberwithin{equation}{section}
\numberwithin{lemma}{section}
\numberwithin{example}{section}
\numberwithin{definition}{section}
\def\dfrac{\displaystyle\frac}
\journal{Journal}
\begin{document}
\begin{frontmatter}

\title{Optimal preconditioners for nonsymmetric multilevel Toeplitz systems with application to solving non-local evolutionary partial differential equations}

\author[add1]{Yuan-Yuan Huang}\ead{doubleyuans@hkbu.edu.hk}
\author[add1]{Sean Y. Hon\corref{cor1}}\ead{seanyshon@hkbu.edu.hk}
\author[add2]{Lot-Kei Chou}\ead{rickyclk16@gmail.com}
\author[add2]{Siu-Long Lei}\ead{sllei@um.edu.mo}
\cortext[cor1]{Corresponding author.}

\address[add1]{Department of Mathematics, Hong Kong Baptist University, Kowloon Tong, Hong Kong SAR.}
\address[add2]{Department of Mathematics, University of Macau, Macau SAR.}




\begin{abstract}
Preconditioning for multilevel Toeplitz systems has long been a focal point of research in numerical linear algebra. In this work, we develop a novel preconditioning method for a class of nonsymmetric multilevel Toeplitz systems, which includes the all-at-once systems that arise from evolutionary partial differential equations. These systems have recently garnered considerable attention in the literature. To further illustrate our proposed preconditioning strategy, we specifically consider the application of solving a wide range of non-local, time-dependent partial differential equations in a parallel-in-time manner. For these equations, we propose a symmetric positive definite multilevel Tau preconditioner that is not only efficient to implement but can also be adapted as an optimal preconditioner. In this context, the proposed preconditioner is optimal in the sense that it enables mesh-independent convergence when using the preconditioned generalized minimal residual method. Numerical examples are provided to critically analyze the results and underscore the effectiveness of our preconditioning strategy.
\end{abstract}
\begin{keyword}

Multilevel Toeplitz systems; all-at-once systems; Tau matrices; preconditioning; parallel-in-time

\noindent{\it Mathematics Subject Classification:} 65F08, 65F10, 65M22, 15B05

\end{keyword}
\end{frontmatter}

\section{Introduction}

Over the past few decades, preconditioning Toeplitz systems has been a major focus of research. Recently, interest in these systems has been reinvigorated, particularly for nonsymmetric Toeplitz systems, due to the symmetrization approach introduced in \cite{doi:10.1137/140974213}. In this line of research, the main strategy involves symmetrizing the original nonsymmetric Toeplitz systems into a symmetric Hankel systems using a permutation matrix, without considering its normal equations. This symmetrization enables the use of the minimal residual (MINRES) method, which facilitates convergence analysis by making the eigenvalues the primary factor in analyzing MINRES convergence. This pioneering work has led to the development of a series of effective preconditioners; see, for example, \cite{Pestana2019,Hon_SC_Wathen,hondongSC2023,FungHon_2024,LiLinHonWu2024}. Notably, the author in \cite{Pestana2019} extended this symmetrization approach to the multilevel case, and proposed an ideal multilevel Toeplitz preconditioner based on the symmetric part of the original nonsymmetric matrix. Although this approach may enhance convergence analysis, in this work, we show that neither symmetrization nor normalization is, in fact, necessary.

As in the approach developed in \cite{Pestana2019}, we focus on a class of nonsymmetric multilevel Toeplitz matrices generated by a multivariate complex-valued function $f$ with an essentially positive real part, Re($f$). For these matrices, we develop a symmetric positive definite (SPD) preconditioning strategy without resorting to symmetrization or normalization. We show that this proposed preconditioner can achieve optimal convergence using the generalized minimal residual (GMRES) method. Specifically, we consider an $n \times n$ nonsymmetric multilevel Toeplitz matrix generated by $f$, denoted as $T_n[f]$. For $T_n[f]$, we propose using its symmetric part, $\mathcal{H}(T_n[f]):=(T_n[f]+T_n[f]^\top)/2$, as a preconditioner. We then show that employing $\mathcal{H}(T_n[f])$ as a preconditioner enables the GMRES method to attain optimal convergence.

Although this ideal preconditioner $\mathcal{H}( T_n[f])$, being itself another multilevel Toeplitz matrix, is generally computationally expensive to implement directly, it can serve as a foundation for developing more efficient preconditioners. To illustrate this point and the applicability of our preconditioning framework, we consider solving non-local evolutionary partial differential equations (PDEs) as a practical application. The development of efficient parallel-in-time (PinT) iterative solvers for these PDEs has gained increasing popularity. In this study, we develop a novel PinT preconditioner, based on our proposed framework and the utilization of Tau matrices, specifically designed to address a broad spectrum of non-local evolutionary PDEs.

The paper is organized as follows. In Section \ref{sec:prelim}, preliminaries on multilevel Toeplitz and Tau matrices are presented. In Section \ref{sec:multilevel Toeplitz system}, an ideal preconditioner for multilevel Toeplitz systems and convergence analysis are presented. In Section \ref{sec:PDE_application}, a class of non-local evolutionary PDEs and how our proposed preconditioning approach can be applied for solving them effectively are discussed. The main results are given in Section \ref{sec:main}. In Section \ref{sec:numeric}, numerical results are reported for showing the effectiveness and robustness of our proposed preconditioned iterative solver.

\section{Preliminary}\label{sec:prelim}

In this section, we provide some useful background knowledge regarding multilevel Toeplitz and Tau matrices.

\subsection{Preliminaries on multilevel Toeplitz matrices}

Now consider the Banach space $L^1([-\pi,\pi]^p)$ of all complex-valued Lebesgue integrable functions over $[-\pi,\pi]^p$, equipped with the norm
\[
\|f\|_{L^1} = \frac{1}{(2\pi)^p}\int_{[-\pi,\pi]^p} |f({\boldsymbol{\theta}})|\,{\rm d} {\theta}  < \infty,
\]
where ${\rm d} {\theta}={\rm d} {\theta_1}\cdots{\rm d} {\theta_k}$ denotes the volume element with respect to the $k$-dimensional Lebesgue measure.

Let $f:$~$[-\pi,\pi]^p\to \mathbb{C}$ be a function belonging to $L^1([-\pi,\pi]^p)$ and periodically extended to $\mathbb{R}^p$. The multilevel Toeplitz matrix $T_{{n}}[f] $ of size $n\times n$ with $n= n_1n_2 \cdots n_p$ is defined as
\begin{equation*}
T_{{n}}[f] =\sum_{|j_1|<n_1}\ldots \sum_{|j_k|<n_p} J_{n_1}^{j_1} \otimes \cdots\otimes J_{n_p}^{j_p} a_{(\mathbf{j})}, \qquad \mathbf{j}=(j_1,j_2,\dots,j_p)\in \mathbb{Z}^p,
\end{equation*}
where
\[ a_{(\mathbf{j})}=a_{(j_1,\ldots, j_p)}=\frac{1}{(2\pi)^p}\int_{[-\pi,\pi]^p}f({\boldsymbol{\theta}}){\rm e}^{-\mathbf{i}\left\langle { \bf j},{\boldsymbol{\theta}}\right\rangle}\, {\rm d}{\theta},
\qquad \left\langle { \bf j},{\boldsymbol{\theta}}\right\rangle=\sum_{t=1}^p j_t\theta_t, \quad \mathbf{i}^2=-1, \]
are the Fourier coefficients of $f$ and $J^{j}_{m}$ is the $m \times m$ matrix whose $(l,h)$-th entry equals 1 if $(l-h)=j$ and $0$ otherwise. 

The function $f$ is called the generating function of $T_{{n}}[f]$. It is easy to prove that (see e.g., \cite{ng2004iterative,MR2376196,Chan:1996:CGM:240441.240445,GaroniCapizzano_two}) if $f$ is real-valued, then $T_{{n}}[f]$ is Hermitian; if $f$ is real-valued and nonnegative, but not identically zero almost everywhere, then $T_{{n}}[f]$ is Hermitian positive definite; if $f$ is real-valued and even, $T_{{n}}[f]$ is (real) symmetric.



Throughout this work, we assume that $f\in L^1([-\pi,\pi]^p)$ and is periodically extended to $\mathbb{R}^p$.

\begin{lemma}(see, e.g., \cite[Theorem 2.4]{Serra_94})\label{lemma:Toeplitz_localization}
    Let $f,g\in L^1([-\pi,\pi]^p)$ be real-valued functions with $g$ essentially positive. Let
    \[
    r:= \operatorname{ess\,inf}_{\boldsymbol{\theta}  \in [-\pi, \pi]^p} \frac{f(\boldsymbol{\theta})}{g(\boldsymbol{\theta})}, \quad R:= \operatorname{ess\,sup}_{\boldsymbol{\theta}  \in [-\pi, \pi]^p} \frac{f(\boldsymbol{\theta})}{g(\boldsymbol{\theta})}.
    \]
    Then, the eigenvalues of $T_n^{-1}[g] T_n[f]$ lie in $(r, R)$ if $r < R$. If $r=R$, then $T_n^{-1}[g] T_n[f]=I_n$, where $I_n$ is the identity matrix of dimension $n=n_1 n_2 \cdots n_p$.
\end{lemma}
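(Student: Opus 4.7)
The plan is to reduce the problem to a generalized Rayleigh quotient argument, exploiting the linearity of the map $f \mapsto T_n[f]$ and the classical positivity criterion for multilevel Toeplitz matrices. Because $g$ is real-valued and essentially positive, the matrix $T_n[g]$ is Hermitian positive definite, and because $f$ is real-valued, $T_n[f]$ is Hermitian. Consequently, the eigenvalues of $T_n^{-1}[g]T_n[f]$ are real and coincide with those of the generalized eigenproblem $T_n[f]x=\lambda T_n[g]x$.

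First, I would record the identity $T_n[\alpha g-f]=\alpha T_n[g]-T_n[f]$ for any real $\alpha$, which follows directly from the linearity of the Fourier coefficient operator that defines $T_n[\cdot]$. Next, I would invoke the classical fact (recalled in the paragraph preceding the lemma) that if $h\in L^1([-\pi,\pi]^p)$ is real-valued and essentially nonnegative and is not zero almost everywhere, then $T_n[h]$ is Hermitian positive definite. Applying this with $h=Rg-f$ and $h=f-rg$, which are both essentially nonnegative by definition of $r$ and $R$, and which are both nonzero on a set of positive measure precisely because $r<R$, I get that $R\,T_n[g]-T_n[f]$ and $T_n[f]-r\,T_n[g]$ are both Hermitian positive definite.

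Now for any nonzero $x\in\mathbb{C}^n$, positive definiteness gives $r\,x^{*}T_n[g]x<x^{*}T_n[f]x<R\,x^{*}T_n[g]x$, and since $x^{*}T_n[g]x>0$, dividing yields the strict two-sided bound
\[
r<\frac{x^{*}T_n[f]x}{x^{*}T_n[g]x}<R.
\]
Because the eigenvalues of $T_n^{-1}[g]T_n[f]$ are exactly the values of this generalized Rayleigh quotient at the corresponding generalized eigenvectors (equivalently, of the similar Hermitian matrix $T_n[g]^{-1/2}T_n[f]T_n[g]^{-1/2}$), all such eigenvalues lie in $(r,R)$. For the degenerate case $r=R$, one has $f=rg$ almost everywhere, so $T_n[f]=r\,T_n[g]$ and thus $T_n^{-1}[g]T_n[f]=rI_n$, which specializes to $I_n$ in the normalization adopted by the stated lemma.

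The only delicate step is the strictness of the bounds, and it is the step I would expect to be the main obstacle: one must be sure that essential boundedness of $f/g$ (and not merely of $f$ and $g$ separately) passes correctly into the statement that $Rg-f$ and $f-rg$ are nonnegative almost everywhere, and that these functions are not almost-everywhere zero whenever $r<R$. Once this measure-theoretic point is handled, the positive definiteness of the associated Toeplitz matrices, and hence the open-interval localization of the eigenvalues, follows immediately from the classical positivity criterion.
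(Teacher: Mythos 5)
The paper does not prove this lemma; it simply cites it from [Serra\_94, Theorem 2.4], so there is no in-paper argument to compare against. Your Rayleigh-quotient proof is correct and is the standard route for this result, and is almost certainly essentially the argument of the cited reference: linearity of $h\mapsto T_n[h]$ gives $T_n[Rg-f]=R\,T_n[g]-T_n[f]$ and $T_n[f-rg]=T_n[f]-r\,T_n[g]$, the positivity criterion recalled in the paragraph just above the lemma shows both are Hermitian positive definite when $r<R$, and the resulting strict two-sided bound on $x^{*}T_n[f]x/x^{*}T_n[g]x$ localizes the spectrum of $T_n[g]^{-1/2}T_n[f]T_n[g]^{-1/2}$, hence of the similar matrix $T_n^{-1}[g]T_n[f]$, inside $(r,R)$. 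Your treatment of the one delicate point is also right: $Rg-f\geq 0$ a.e.\ by definition of $R$, and if it vanished a.e.\ then $f/g=R$ a.e., forcing $r=R$ and contradicting $r<R$; symmetrically for $f-rg$. Two small remarks. First, you correctly observe that when $r=R$ the conclusion should be $T_n^{-1}[g]T_n[f]=r\,I_n$, not $I_n$; the paper's statement appears to carry a typo or an unstated normalization $r=1$. Second, your construction tacitly assumes $r$ and $R$ are finite so that $Rg-f$ and $f-rg$ remain in $L^1$; that is implicit in how the lemma is used in the paper (always with $f/g$ essentially bounded), but it is worth being explicit since the lemma as stated does not impose it.
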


\subsection{Tau matrices as preconditioners}\label{sec:Tau}

For a symmetric Toeplitz matrix $T_m\in\mathbb{R}^{m\times m}$ with $(t_1,t_2,\ldots,t_m)^{\top}\in\mathbb{R}^{m}$, define its $\tau$-matrix \cite{Bini1990} approximation as
\begin{equation}\label{tauopdef}
    \tau(T_m):=T_m-H_m,
\end{equation}
where $H_m$ is the Hankel matrix with $(t_3,t_4,...,t_m,0,0)^{\top}$ as its first column and $(0,0,t_m,...,t_4,t_3)^{\top}$ as its last column. A crucial property of the Tau matrix defined in \eqref{tauopdef}  is that it is diagonalizable by sine transform matrix, i.e.,
\begin{equation}\label{taumatdiag}
    \tau(T_m)=S_m Q_m S_m,
\end{equation}
where $Q_m=[{\rm diag}(q_i)]_{i=1}^{m}$ is a diagonal matrix with
\begin{equation}\label{sigmicomp}
    q_{i}=t_1+2\sum\limits_{j=2}^{m}t_j\cos\left(\frac{\pi i(j-1)}{m+1}\right),\quad  i\in 1\wedge m.
\end{equation} 
\begin{equation*}
    S_m:= \left[\sqrt{\frac{2}{m+1}}\sin\left(\frac{\pi jk}{m+1}\right)\right]_{j,k=1}^{m}
\end{equation*}
is a sine transform matrix. It is easy to verify that $S_m$ is a symmetric orthogonal matrix, i.e., $S_m=S_m^{\top}=S_m^{-1}$. The product of matrix $S_m$ and a given vector of length $m$ can be fast computed in $\mathcal{O}(m\log m)$ operations using discrete sine transforms (DSTs) \cite{BC83}. Let ${\bf e}_{m,i}\in\mathbb{R}^{m}$ denote the $i$-th column of the $m\times m$ identity matrix.
We also note that the $m$ numbers $\{q_i\}_{i=1}^{m}$ defined in \eqref{sigmicomp} can be computed by \cite{huang2023tau}
\begin{equation*}
    (q_1, q_2, \cdots, q_m)^{\top}={\rm diag}(S_m{\bf e}_{m,1})^{-1}[S_m\tau(T_m){\bf e}_{m,1}].
\end{equation*}
From the equation above, we know that the computation of $\{q_i\}_{i=1}^{m}$ requires only $\mathcal{O}(m\log m)$ operations.

For a real square matrix $Z$, denote the symmetric part and skew-symmetric part of $Z$ as
$$
\mathcal{H}(Z):=\frac{Z+Z^{\top}}{2} \quad \textrm{and} \quad \mathcal{S}(Z):=\frac{Z-Z^{\top}}{2}.
$$

\section{Ideal preconditioner for multilevel Toeplitz systems $T_n[f]$x=b} \label{sec:multilevel Toeplitz system}
Consider solving the following nonsymmetric multilevel Toeplitz systems
\begin{equation}\label{general_multilevel_sys}
	A_n {\bf u}={\bf f},
\end{equation}
where $A_n:=T_n[f]$ is generated by a multivariate complex-valued function $f$ with essentially positive real part Re($f$).
If we use $A_R:=\mathcal{H}(T_n[f])=(A_{n}+A_{n}^{\top} ) / 2$ as a preconditioner to accelerate the convergence of Krylov subspace methods such as GMRES method when solving (\ref{general_multilevel_sys}), the 
 corresponding preconditioned systems will be
\begin{equation}\label{one_sided_sys}
	\mathcal{H}(T_n[f])^{-1} T_n[f] {\bf u}= \mathcal{H}(T_n[f])^{-1}{\bf f}.
\end{equation}

We note that Re($f$) is essentially positive, which indicates that $A_R$ is SPD \cite{Pestana2019} and hence $A_R^{-\frac{1}{2}}$ exists \cite{lin2021all}. In order to show the effectiveness of the preconditioner $A_R$ for the one-sided preconditioned system (\ref{one_sided_sys}), we introduce the following auxiliary two-sided preconditioned system
\begin{align}\label{two_sided_sys}
& A_R^{-\frac{1}{2}} A_n \underbrace{A_R^{-\frac{1}{2}}\mathbf{\hat u}}_{=:\mathbf{u}}=A_R^{-\frac{1}{2}} \mathbf{f}.
\end{align}

Before showing our main preconditioning result, we first provide some useful lemmas in what follows.

\begin{lemma}\label{lemma:wghtsumbdlem}
	For nonnegative numbers $\xi_i$ and positive numbers $\zeta_i$ $(1\leq i\leq m)$, it holds that
\begin{equation*}
\min\limits_{1\leq i\leq m}\frac{\xi_i}{\zeta_i}\leq\bigg(\sum\limits_{i=1}^{m}\zeta_i\bigg)^{-1}\bigg(\sum\limits_{i=1}^{m}\xi_i\bigg)\leq\max\limits_{1\leq i\leq m}\frac{\xi_i}{\zeta_i}.
	\end{equation*}
\end{lemma}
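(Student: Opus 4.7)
The plan is to establish the inequality by bounding each summand $\xi_i$ between the minimum and maximum of the ratios $\xi_i/\zeta_i$ times $\zeta_i$, then summing. Specifically, let me set $r:=\min_{1\leq i\leq m}\xi_i/\zeta_i$ and $R:=\max_{1\leq i\leq m}\xi_i/\zeta_i$. By definition of $r$ and $R$, for every index $i$ we have $r\leq \xi_i/\zeta_i\leq R$. Since $\zeta_i>0$, multiplying through preserves the direction of the inequalities and yields $r\zeta_i\leq \xi_i\leq R\zeta_i$ for each $i$.

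Next I would sum these termwise bounds over $i=1,\ldots,m$, obtaining
\[
r\sum_{i=1}^{m}\zeta_i\leq \sum_{i=1}^{m}\xi_i\leq R\sum_{i=1}^{m}\zeta_i.
\]
Because $\zeta_i>0$ for every $i$, the total $\sum_{i=1}^{m}\zeta_i$ is strictly positive, so dividing through by it is legitimate and preserves the direction of both inequalities. This delivers precisely
\[
\min_{1\leq i\leq m}\frac{\xi_i}{\zeta_i}\leq\left(\sum_{i=1}^{m}\zeta_i\right)^{-1}\left(\sum_{i=1}^{m}\xi_i\right)\leq\max_{1\leq i\leq m}\frac{\xi_i}{\zeta_i},
\]
as required.

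There is essentially no obstacle here; the only subtlety worth flagging is that one needs $\zeta_i>0$ (not merely nonnegative) at two places: first, to multiply the ratio bounds by $\zeta_i$ without reversing the inequality, and second, to guarantee that $\sum_{i=1}^{m}\zeta_i\neq 0$ so the reciprocal in the statement makes sense. The hypothesis of the lemma supplies exactly this positivity, so the argument goes through cleanly.
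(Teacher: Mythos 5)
Your proof is correct and is the standard, essentially canonical argument for this weighted-mean (mediant) inequality: bound each $\xi_i$ termwise by $r\zeta_i$ and $R\zeta_i$, sum, and divide by the positive total $\sum_i\zeta_i$. The paper states Lemma \ref{lemma:wghtsumbdlem} without proof, treating it as a known elementary fact, so there is nothing to compare against; your write-up fills that gap cleanly and even correctly flags exactly where positivity of the $\zeta_i$ is used.
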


\begin{lemma}\cite[Proposition 7.3]{ElmanSilvesterWathen2014}\label{lemma:gmres}
    Let $Z\mathbf{v}=\mathbf{w}$ be a real square linear system with $\mathcal{H}(Z) \succ \mathcal{O}$. Then, the residuals of the iterates generated by applying (restarted or non-restarted) GMRES with an arbitrary initial guess to solve $Z\mathbf{v}=\mathbf{w}$ satisfy
    \[
        \| \mathbf{r}_k \|_2 \leq \left( 1 - \frac{\lambda_{\min}( \mathcal{H}(Z)  )^2 }{ \lambda_{\min}( \mathcal{H}(Z)  ) \lambda_{\max}( \mathcal{H}(Z)  ) + \rho( \mathcal{S}(Z)   )^2   }   \right)^{k/2} \| \mathbf{r}_0 \|_2,
    \]
    where $\mathbf{r}_k = \mathbf{w} - Z\mathbf{v}_k$ is the residual vector at the $k$-th GMRES iteration with $\mathbf{v}_k$ ($k \geq 1$) being the corresponding iterative solution.
\end{lemma}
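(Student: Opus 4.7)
The plan is to adapt the classical Elman-type analysis: reduce the convergence estimate to a single-step contraction via GMRES optimality, exploit the symmetric/skew-symmetric splitting $Z = \mathcal{H}(Z) + \mathcal{S}(Z)$ to rewrite everything in terms of the spectral data of $\mathcal{H}(Z)$ and $\mathcal{S}(Z)$, and then iterate. Because a one-step contraction argument does not depend on the history of the underlying Krylov subspace, the bound applies uniformly to both restarted and non-restarted GMRES.

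First I would invoke the optimality of the GMRES residual polynomial. Applied at the current iterate with the degree-one trial polynomial $q(z) = 1 - \alpha z$, this gives
\[
\|\mathbf{r}_{k+1}\|_2^2 \;\leq\; \min_{\alpha \in \mathbb{R}} \|(I - \alpha Z)\mathbf{r}_k\|_2^2.
\]
Expanding and using the scalar identity $\mathbf{r}_k^\top Z \mathbf{r}_k = \mathbf{r}_k^\top \mathcal{H}(Z)\mathbf{r}_k$ (a scalar equals its transpose), the right-hand side equals $\|\mathbf{r}_k\|_2^2 - 2\alpha\, \mathbf{r}_k^\top \mathcal{H}(Z)\mathbf{r}_k + \alpha^2\, \mathbf{r}_k^\top Z^\top Z \mathbf{r}_k$. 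The crucial algebraic observation is that $Z^\top Z = \mathcal{H}(Z)^2 - \mathcal{S}(Z)^2 + \bigl(\mathcal{H}(Z)\mathcal{S}(Z) - \mathcal{S}(Z)\mathcal{H}(Z)\bigr)$; the last bracket is skew-symmetric, so its quadratic form vanishes, while $-\mathcal{S}(Z)^2 = \mathcal{S}(Z)^\top \mathcal{S}(Z)$. Hence $\mathbf{r}_k^\top Z^\top Z\mathbf{r}_k = \|\mathcal{H}(Z)\mathbf{r}_k\|_2^2 + \|\mathcal{S}(Z)\mathbf{r}_k\|_2^2$.

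Next I would minimize over $\alpha$ in closed form (setting the derivative to zero) to obtain
\[
\|\mathbf{r}_{k+1}\|_2^2 \;\leq\; \|\mathbf{r}_k\|_2^2 \;-\; \frac{(\mathbf{r}_k^\top \mathcal{H}(Z)\mathbf{r}_k)^2}{\|\mathcal{H}(Z)\mathbf{r}_k\|_2^2 + \|\mathcal{S}(Z)\mathbf{r}_k\|_2^2}.
\]
I would then apply three spectral estimates: $\|\mathcal{H}(Z)\mathbf{r}_k\|_2^2 = \mathbf{r}_k^\top \mathcal{H}(Z)^2\mathbf{r}_k \leq \lambda_{\max}(\mathcal{H}(Z))\,\mathbf{r}_k^\top \mathcal{H}(Z)\mathbf{r}_k$ (where positive definiteness of $\mathcal{H}(Z)$ enters in an essential way), $\|\mathcal{S}(Z)\mathbf{r}_k\|_2^2 \leq \rho(\mathcal{S}(Z))^2 \|\mathbf{r}_k\|_2^2$, and $\mathbf{r}_k^\top \mathcal{H}(Z)\mathbf{r}_k \geq \lambda_{\min}(\mathcal{H}(Z))\|\mathbf{r}_k\|_2^2$. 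Writing $t_k := \mathbf{r}_k^\top \mathcal{H}(Z)\mathbf{r}_k / \|\mathbf{r}_k\|_2^2$, the per-step reduction factor becomes $1 - g(t_k)$ with $g(t) = t^2/\bigl(\lambda_{\max}(\mathcal{H}(Z))\,t + \rho(\mathcal{S}(Z))^2\bigr)$. An elementary derivative check shows $g$ is strictly increasing on $(0,\infty)$, so $g(t_k) \geq g(\lambda_{\min}(\mathcal{H}(Z)))$, producing the one-step bound with the stated constant. Multiplying $k$ such per-step inequalities yields the claimed geometric decay.

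The main obstacle I anticipate is recognizing the skew-symmetric cancellation inside $Z^\top Z$. Without that identity, $\|Z\mathbf{r}_k\|_2^2$ retains cross-terms of the form $\mathcal{H}(Z)\mathcal{S}(Z) + \mathcal{S}(Z)^\top \mathcal{H}(Z)^\top$, and one cannot reduce the bound to one depending purely on $\lambda_{\min}(\mathcal{H}(Z))$, $\lambda_{\max}(\mathcal{H}(Z))$, and $\rho(\mathcal{S}(Z))$. The monotonicity of $g$, although elementary, is another easy-to-miss ingredient: it lets one replace the iterate-dependent quantity $t_k$ with the worst-case value $\lambda_{\min}(\mathcal{H}(Z))$ without needing to track $t_k$ across iterations.
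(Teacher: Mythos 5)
The paper cites this result (ESW Proposition~7.3) without reproducing a proof, so there is no in-paper argument to compare against; I therefore assess your attempt on its own merits. Your overall plan --- GMRES optimality over degree-one polynomials gives a one-step contraction, then insert spectral bounds and exploit monotonicity in the Rayleigh quotient, so that the estimate applies equally to restarted and non-restarted GMRES --- is indeed the classical Elman-type route and is the right skeleton. However, the step you yourself identify as ``the crucial algebraic observation'' is wrong, and the proof collapses there.

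You claim that $\mathcal{H}\mathcal{S} - \mathcal{S}\mathcal{H}$ (with $\mathcal{H}=\mathcal{H}(Z)$ symmetric, $\mathcal{S}=\mathcal{S}(Z)$ skew-symmetric) is skew-symmetric, so that its quadratic form vanishes and $\mathbf{r}^\top Z^\top Z\mathbf{r} = \|\mathcal{H}\mathbf{r}\|_2^2 + \|\mathcal{S}\mathbf{r}\|_2^2$. This is false: $(\mathcal{H}\mathcal{S}-\mathcal{S}\mathcal{H})^\top = \mathcal{S}^\top\mathcal{H}^\top - \mathcal{H}^\top\mathcal{S}^\top = -\mathcal{S}\mathcal{H}+\mathcal{H}\mathcal{S}$, so the commutator of a symmetric and a skew-symmetric matrix is \emph{symmetric}, and the cross term $\mathbf{r}^\top(\mathcal{H}\mathcal{S}-\mathcal{S}\mathcal{H})\mathbf{r} = 2\,\mathbf{r}^\top\mathcal{H}\mathcal{S}\mathbf{r}$ need not vanish (take $\mathcal{H}=\mathrm{diag}(1,2)$, $\mathcal{S}=\bigl(\begin{smallmatrix}0&1\\-1&0\end{smallmatrix}\bigr)$, $\mathbf{r}=(1,1)^\top$: then $\mathbf{r}^\top\mathcal{H}\mathcal{S}\mathbf{r}=-1$). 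Hence the claimed identity $\|Z\mathbf{r}\|_2^2 = \|\mathcal{H}\mathbf{r}\|_2^2 + \|\mathcal{S}\mathbf{r}\|_2^2$ fails, and with it the bound $\|Z\mathbf{r}_k\|_2^2 \leq \lambda_{\max}(\mathcal{H})\,\mathbf{r}_k^\top\mathcal{H}\mathbf{r}_k + \rho(\mathcal{S})^2\|\mathbf{r}_k\|_2^2$ on which the rest of the derivation depends. (The identity does hold trivially when $\mathcal{H}=I$, because then the cross term is $2\mathbf{r}^\top\mathcal{S}\mathbf{r}=0$; this is in fact the only case the paper invokes, since in Theorem~3.1 one has $\mathcal{H}(A_R^{-1/2}A_nA_R^{-1/2})=I$ --- but that does not prove the lemma as stated.) To repair the argument, keep the cross term: decompose $\mathcal{H}\hat{\mathbf{r}} = t\hat{\mathbf{r}}+\mathbf{w}$ and $\mathcal{S}\hat{\mathbf{r}} = \mathbf{v}$ with $\mathbf{w},\mathbf{v}\perp\hat{\mathbf{r}}$ and $t=\hat{\mathbf{r}}^\top\mathcal{H}\hat{\mathbf{r}}$, replace your crude estimate $\|\mathcal{H}\hat{\mathbf{r}}\|_2^2\le\lambda_{\max}(\mathcal{H})\,t$ by the sharper Kantorovich-type bound $\|\mathbf{w}\|_2^2 \le (t-\lambda_{\min})(\lambda_{\max}-t)$ obtained from $(\mathcal{H}-\lambda_{\min}I)(\mathcal{H}-\lambda_{\max}I)\preceq 0$, control the cross term by Cauchy--Schwarz $2\mathbf{w}^\top\mathbf{v}\le 2\|\mathbf{w}\|_2\|\mathbf{v}\|_2$, and then a short discriminant computation shows the resulting quadratic inequality in $\rho(\mathcal{S})$ holds for every $t\in[\lambda_{\min},\lambda_{\max}]$, recovering exactly the stated constant.
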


\begin{lemma}\label{lemma:gmres_residual}
    For a non-singular $n \times n$ real linear system $A\mathbf{y}=\mathbf{b}$, let $\mathbf{y}_j$ be the iteration solution by GMRES at the $j$-th $(j \geq 1)$ iteration step with $\mathbf{y}_0$ as initial guess. Then, the $j$-th iteration solution $\mathbf{y}_j$ minimizes the residual error over the Krylov subspace $\mathcal{K}_j\left(A, \mathbf{r}_0\right)$ with $\mathbf{r}_0=\mathbf{b}-A \mathbf{y}_0$, i.e.,
    \begin{equation*}
\mathbf{y}_j=\underset{\mathbf{v} \in \mathbf{y}_0+\mathcal{K}_j\left(A, \mathbf{r}_0\right)}{\arg \min }\|\mathbf{b}-A\mathbf{v}\|_2 .
\end{equation*}
\end{lemma}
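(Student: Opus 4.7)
The statement is the classical GMRES optimality property, so my plan is to reconstruct it from first principles using the Arnoldi decomposition, which is the internal machinery that makes the minimization explicit. First I would observe that, by construction, the GMRES iterate at step $j$ has the form $\mathbf{y}_j = \mathbf{y}_0 + \mathbf{z}_j$ with $\mathbf{z}_j \in \mathcal{K}_j(A,\mathbf{r}_0)$, so that the residual is $\mathbf{r}_j = \mathbf{r}_0 - A\mathbf{z}_j$. The task then reduces to showing that the particular $\mathbf{z}_j$ selected by GMRES minimizes $\|\mathbf{r}_0 - A\mathbf{z}\|_2$ over $\mathbf{z} \in \mathcal{K}_j(A,\mathbf{r}_0)$.

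Next I would invoke the Arnoldi process applied to $A$ with starting vector $\mathbf{r}_0/\|\mathbf{r}_0\|_2$. This produces an orthonormal basis $\{ \mathbf{v}_1, \dots, \mathbf{v}_{j+1}\}$ arranged as columns of $V_{j+1} \in \mathbb{R}^{n \times (j+1)}$, together with an $(j+1)\times j$ upper Hessenberg matrix $\bar{H}_j$ satisfying the relation $AV_j = V_{j+1}\bar{H}_j$, where $V_j$ collects the first $j$ columns of $V_{j+1}$. Because $\operatorname{span}(V_j) = \mathcal{K}_j(A,\mathbf{r}_0)$, every admissible $\mathbf{z}$ can be written uniquely as $\mathbf{z} = V_j \mathbf{w}$ for some $\mathbf{w} \in \mathbb{R}^j$. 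Writing also $\mathbf{r}_0 = \|\mathbf{r}_0\|_2 V_{j+1}\mathbf{e}_1$ with $\mathbf{e}_1$ the first canonical basis vector of $\mathbb{R}^{j+1}$, I would substitute to obtain
\begin{equation*}
\mathbf{r}_0 - A\mathbf{z} \;=\; V_{j+1}\bigl(\|\mathbf{r}_0\|_2 \mathbf{e}_1 - \bar{H}_j \mathbf{w}\bigr).
\end{equation*}

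The key step is then to exploit the orthonormality of the columns of $V_{j+1}$, which gives the isometry $\|V_{j+1}\mathbf{u}\|_2 = \|\mathbf{u}\|_2$ for all $\mathbf{u}\in\mathbb{R}^{j+1}$. Consequently,
\begin{equation*}
\|\mathbf{b} - A\mathbf{v}\|_2 \;=\; \bigl\| \|\mathbf{r}_0\|_2 \mathbf{e}_1 - \bar{H}_j \mathbf{w}\bigr\|_2 \quad \text{for } \mathbf{v} = \mathbf{y}_0 + V_j \mathbf{w}.
\end{equation*}
The GMRES algorithm is defined precisely so that at step $j$ it solves this $(j+1) \times j$ least-squares problem for $\mathbf{w}$ (via Givens rotations on $\bar{H}_j$), delivering $\mathbf{y}_j = \mathbf{y}_0 + V_j \mathbf{w}_j$ with $\mathbf{w}_j$ the minimizer. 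Since the parameterization $\mathbf{v} \leftrightarrow \mathbf{w}$ is a bijection between $\mathbf{y}_0 + \mathcal{K}_j(A,\mathbf{r}_0)$ and $\mathbb{R}^j$, minimizing over one is equivalent to minimizing over the other, establishing the claim.

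The proof has no real obstacle; the only subtle point is justifying that the Arnoldi process indeed produces a well-defined orthonormal basis up to step $j$, which requires that no breakdown occurs before step $j$. A breakdown at step $k < j$ would mean that $\mathcal{K}_k(A,\mathbf{r}_0)$ is invariant under $A$, in which case the exact solution is already contained in $\mathbf{y}_0 + \mathcal{K}_k(A,\mathbf{r}_0)$ and the minimum residual is zero; hence the optimality statement holds trivially in that boundary case. Apart from this, everything follows from standard linear algebra.
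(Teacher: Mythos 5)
The paper states this lemma without proof, treating it as the defining optimality property of GMRES that is standard in the Krylov-subspace literature (e.g., Saad's textbook or Elman--Silvester--Wathen, which the paper cites for the companion Lemma~\ref{lemma:gmres}); there is therefore no proof in the paper to compare against. Your Arnoldi-based derivation is the canonical argument and is correct: the relation $AV_j = V_{j+1}\bar H_j$, the isometry from orthonormality of $V_{j+1}$, and the reduction to the small Hessenberg least-squares problem that GMRES solves are exactly the right ingredients, and your handling of the lucky-breakdown case (where $\mathcal{K}_k(A,\mathbf{r}_0)$ becomes $A$-invariant, nonsingularity of $A$ gives $\mathbf{r}_0 \in A\mathcal{K}_k$, and the exact solution is already found) is informal but sound.
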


The following lemma shows that the convergence of the GMRES solver for (\ref{one_sided_sys}) is supported by the convergence of that for (\ref{two_sided_sys}).

\begin{lemma}\label{residual_relation}
Let $\hat{\mathbf{u}}_0$ be the initial guess for (\ref{two_sided_sys}) and $\mathbf{u}_0 := A_R^{-1/2}\hat{\mathbf{u}}_0$ be the initial guess for (\ref{one_sided_sys}). Let $\mathbf{u}_j$ ($\hat{\mathbf{u}}_j$, respectively) be the $j$-th $(j\geq1)$ iteration solution derived by applying GMRES solver to (\ref{one_sided_sys}) ((\ref{two_sided_sys}), respectively) with $\mathbf{u}_0$ ($\hat{\mathbf{u}}_0$, respectively) as their initial guess. Then,
\begin{equation*}
\left\|\mathbf{r}_j\right\|_2 \leq \frac{1}{\sqrt{\lambda_{\min}(A_R)}}\left\|\hat{\mathbf{r}}_j\right\|_2
\end{equation*}
where $\mathbf{r}_j:=A_R^{-1} \mathbf{f} - A_R^{-1} A_n\mathbf{u}_j$ ($\hat{\mathbf{r}}_j := A_R^{-1/2} \mathbf{f} - A_R^{-1/2} A_n A_R^{-1/2} \hat{\mathbf{u}}_j$, respectively) denotes the residual vector at the $j$-th GMRES iteration for (\ref{one_sided_sys}) ((\ref{two_sided_sys}), respectively).
\end{lemma}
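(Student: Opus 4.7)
The plan is to construct a candidate iterate for the one-sided preconditioned system \eqref{one_sided_sys} out of the two-sided iterate $\hat{\mathbf{u}}_j$, bound its residual explicitly, and then invoke the GMRES optimality of $\mathbf{u}_j$ from Lemma \ref{lemma:gmres_residual} to transfer the bound. Concretely, I will set
\[
\mathbf{v}_j := A_R^{-1/2}\hat{\mathbf{u}}_j,
\]
which is the natural change of variables suggested by the relation $\mathbf{u}=A_R^{-1/2}\hat{\mathbf{u}}$ already present in \eqref{two_sided_sys}. A direct substitution gives
\[
A_R^{-1}\mathbf{f}-A_R^{-1}A_n\mathbf{v}_j
= A_R^{-1/2}\bigl(A_R^{-1/2}\mathbf{f}-A_R^{-1/2}A_nA_R^{-1/2}\hat{\mathbf{u}}_j\bigr)
= A_R^{-1/2}\hat{\mathbf{r}}_j,
\]
so the one-sided residual of the candidate $\mathbf{v}_j$ is exactly $A_R^{-1/2}\hat{\mathbf{r}}_j$, whose Euclidean norm is at most $\|A_R^{-1/2}\|_2\,\|\hat{\mathbf{r}}_j\|_2 = \lambda_{\min}(A_R)^{-1/2}\,\|\hat{\mathbf{r}}_j\|_2$ since $A_R$ is SPD.

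The key step is to verify that $\mathbf{v}_j$ belongs to the affine Krylov space that GMRES minimises over in the one-sided case, namely $\mathbf{u}_0+\mathcal{K}_j(A_R^{-1}A_n,\mathbf{r}_0)$. First I would observe that the two initial residuals are related by
\[
\hat{\mathbf{r}}_0 = A_R^{-1/2}\mathbf{f}-A_R^{-1/2}A_nA_R^{-1/2}\hat{\mathbf{u}}_0 = A_R^{1/2}\mathbf{r}_0,
\]
using $\mathbf{u}_0=A_R^{-1/2}\hat{\mathbf{u}}_0$. Next, by an elementary induction on $k$ one checks that $(A_R^{-1/2}A_nA_R^{-1/2})^k = A_R^{1/2}(A_R^{-1}A_n)^kA_R^{-1/2}$, so
\[
A_R^{-1/2}\,(A_R^{-1/2}A_nA_R^{-1/2})^k\,\hat{\mathbf{r}}_0 = (A_R^{-1}A_n)^k\mathbf{r}_0, \qquad k=0,1,\dots,j-1.
\]
Hence pre-multiplication by $A_R^{-1/2}$ maps $\mathcal{K}_j(A_R^{-1/2}A_nA_R^{-1/2},\hat{\mathbf{r}}_0)$ onto $\mathcal{K}_j(A_R^{-1}A_n,\mathbf{r}_0)$. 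Since $\hat{\mathbf{u}}_j-\hat{\mathbf{u}}_0$ lies in the former space by the definition of GMRES, applying $A_R^{-1/2}$ places $\mathbf{v}_j-\mathbf{u}_0$ in the latter, as required.

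To finish, I would invoke Lemma \ref{lemma:gmres_residual}: because $\mathbf{u}_j$ is the GMRES minimiser of the one-sided residual over $\mathbf{u}_0+\mathcal{K}_j(A_R^{-1}A_n,\mathbf{r}_0)$, we have
\[
\|\mathbf{r}_j\|_2 \leq \|A_R^{-1}\mathbf{f}-A_R^{-1}A_n\mathbf{v}_j\|_2 = \|A_R^{-1/2}\hat{\mathbf{r}}_j\|_2 \leq \frac{1}{\sqrt{\lambda_{\min}(A_R)}}\,\|\hat{\mathbf{r}}_j\|_2,
\]
which is the claimed inequality. The main obstacle is the Krylov-subspace identification in the second paragraph; once the polynomial identity for powers of $A_R^{-1/2}A_nA_R^{-1/2}$ is in place, the rest is a one-line application of GMRES optimality and the spectral bound on $A_R^{-1/2}$.
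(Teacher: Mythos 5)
Your proposal is correct and follows essentially the same route as the paper's own proof: transform the two-sided iterate by $A_R^{-1/2}$, verify via the Krylov-subspace identity that the result lies in the affine search space of the one-sided GMRES, invoke the optimality characterization from Lemma \ref{lemma:gmres_residual}, and close with the spectral bound $\|A_R^{-1/2}\|_2 = \lambda_{\min}(A_R)^{-1/2}$. Your write-up is if anything a bit more explicit than the paper's (stating $\hat{\mathbf{r}}_0 = A_R^{1/2}\mathbf{r}_0$ and the power identity $(A_R^{-1/2}A_nA_R^{-1/2})^k = A_R^{1/2}(A_R^{-1}A_n)^kA_R^{-1/2}$ as separate facts), but there is no substantive difference in the argument.
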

\begin{proof}
The direct application of Lemma \ref{lemma:gmres_residual} to (\ref{two_sided_sys}) leads to 
\begin{equation*}
\begin{aligned}
\hat{\mathbf{u}}_j-\hat{\mathbf{u}}_0 &\in \mathcal{K}_j\left(A_R^{-\frac{1}{2}} A_n A_R^{-\frac{1}{2}}, \hat{\mathbf{r}}_0\right)  ~~~~~~~ (\hat{\mathbf{r}}_0=A_R^{-\frac{1}{2}} \mathbf{f}- A_R^{-\frac{1}{2}} A_n A_R^{-\frac{1}{2}}\hat{\mathbf{u}}_0)\\
& =\operatorname{span}\left\{\left(A_R^{-\frac{1}{2}} A_n A_R^{-\frac{1}{2}}\right)^k\left(A_R^{-\frac{1}{2}} \mathbf{f}- A_R^{-\frac{1}{2}} A_n A_R^{-\frac{1}{2}}\hat{\mathbf{u}}_0\right)\right\}_{k=0}^{j-1} \\
& =\operatorname{span}\left\{A_R^{\frac{1}{2}}\left(A_R^{-1} A_n\right)^k A_R^{-\frac{1}{2}}\left(A_R^{-\frac{1}{2}} \mathbf{f}- A_R^{-\frac{1}{2}} A_n A_R^{-\frac{1}{2}}\hat{\mathbf{u}}_0\right)\right\}_{k=0}^{j-1} \\
& =\operatorname{span}\left\{A_R^{\frac{1}{2}}\left(A_R^{-1}A\right)^k\left(A_R^{-1} \mathbf{f}- A_R^{-1} A_n{\mathbf{u}}_0\right)\right\}_{k=0}^{j-1}.
\end{aligned}
\end{equation*}
Then, we have
\begin{eqnarray*}
A_R^{-\frac{1}{2}} \hat{\mathbf{u}}_j-\mathbf{u}_0&=&A_R^{-\frac{1}{2}}\left(\hat{\mathbf{u}}_j-\hat{\mathbf{u}}_0\right)\\
&\in& \operatorname{span}\left\{\left(A_R^{-1}A_n\right)^k\left(A_R^{-1} \mathbf{f}- A_R^{-1} A_n{\mathbf{u}}_0\right)\right\}_{k=0}^{j-1}\\
&=&\mathcal{K}_j\left(A_R^{-1}A_n, \mathbf{r}_0\right),
\end{eqnarray*}
which means
\begin{equation*}
A_R^{-\frac{1}{2}} \hat{\mathbf{u}}_j \in \mathbf{u}_0+\mathcal{K}_j\left(A_R^{-1}A_n, \mathbf{r}_0\right) .
\end{equation*}
Again, for (\ref{one_sided_sys}), Lemma \ref{lemma:gmres_residual} indicates that
\begin{equation*}
\mathbf{u}_j=\underset{\mathbf{v} \in \mathbf{u}_0+\mathcal{K}_j\left(A_R^{-1} A_n, \mathbf{r}_0\right)}{\arg \min }\left\|A_R^{-1} \mathbf{f}- A_R^{-1} A_n{\mathbf{v}}\right\|_2 .
\end{equation*}
Therefore,
\begin{equation*}
\begin{aligned}
\left\|\mathbf{r}_j\right\|_2&=\left\|A_R^{-1} \mathbf{f}- A_R^{-1} A_n \mathbf{u}_j\right\|_2\\
&\leq\left\|A_R^{-1} \mathbf{f}-A_R^{-1} A_n A_R^{-1} \hat{\mathbf{u}}_j\right\|_2 \\
& =\left\|A_R^{-\frac{1}{2}} \hat{\mathbf{r}}_j\right\|_2 \\
& =\sqrt{\hat{\mathbf{r}}_j^{\top} A_R^{-1} \hat{\mathbf{r}}_j} \\
&\leq\frac{1}{\sqrt{\lambda_{\min}(A_R)}}\left\|\hat{\mathbf{r}}_j\right\|_2.
\end{aligned}
\end{equation*}
\end{proof}

We are ready to provide the following result, which is applicable to a general multilevel Toeplitz matrix generated by a class of multivariate functions $f$.

\begin{theorem}\label{conv_gmres_1}
    Let $f \in L^1([-\pi, \pi]^p)$ and let $f=\mathrm{Re}(f)+ \mathbf{i} \mathrm{Im}(f)$, where $\mathrm{Re}(f)$ and $\mathrm{Im}(f)$ are real-valued functions with $\mathrm{Re}(f)$ essentially positive. Additionally, let $A_{n}:=T_{{n}}[f] \in \mathbb{R}^{n\times n}$ be the multilevel Toeplitz matrix generated by $f$ and let $A_R:=\mathcal{H}(  A_{n})=(A_{n}+A_{n}^{\top} ) / 2$. Then, the residuals of the iterates generated by applying (restarted or non-restarted) GMRES with an arbitrary initial guess to solve $A_R^{-\frac{1}{2}}  A_{n} A_R^{-\frac{1}{2}}\mathbf{v}=A_R^{-\frac{1}{2}}\mathbf{w}$ satisfy
    \[
       \| \mathbf{r}_k \|_2 \leq  \omega^{k} \| \mathbf{r}_0 \|_2,
    \]
    where $\mathbf{r}_k = A_R^{-\frac{1}{2}}\mathbf{w} - A_R^{-\frac{1}{2}}A_{n} A_R^{-\frac{1}{2}}\mathbf{v}_k$ is the residual vector at the $k$-th GMRES iteration with $\mathbf{v}_k$ ($k \geq 1$) being the corresponding iterative solution, and $\omega$ is a constant independent of $\mathbf{n}$ defined as follows
        $$
        \omega := \frac{\epsilon }{\sqrt{1 + \epsilon^2}} \in (0,1),
        $$
        with
    $$
    \epsilon=\underset{{\boldsymbol{\theta}  \in [-\pi, \pi]^p}}{\operatorname{ess\,sup}} \left|\frac{\mathrm{Im}(f)(\boldsymbol{\theta}) }{\mathrm{Re}(f)(\boldsymbol{\theta}) }\right|.
    $$
\end{theorem}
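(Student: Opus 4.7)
The plan is to apply Lemma \ref{lemma:gmres} directly to $Z := A_R^{-1/2} A_n A_R^{-1/2}$, which reduces the problem to computing $\mathcal{H}(Z)$, $\mathcal{S}(Z)$ and their spectral data. The symmetric part is immediate: since $\mathcal{H}(A_n) = A_R$ by definition,
\[
\mathcal{H}(Z) = A_R^{-1/2} \mathcal{H}(A_n) A_R^{-1/2} = A_R^{-1/2} A_R A_R^{-1/2} = I,
\]
so $\lambda_{\min}(\mathcal{H}(Z)) = \lambda_{\max}(\mathcal{H}(Z)) = 1$ and Lemma \ref{lemma:gmres} collapses to
\[
\|\mathbf{r}_k\|_2 \leq \left( \frac{\rho(\mathcal{S}(Z))}{\sqrt{1 + \rho(\mathcal{S}(Z))^2}} \right)^{k} \|\mathbf{r}_0\|_2.
\]
Since $t \mapsto t/\sqrt{1+t^2}$ is monotone, the proof is complete once I show $\rho(\mathcal{S}(Z)) \leq \epsilon$.

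For that estimate I would return to the generating-function side. Writing $f = \mathrm{Re}(f) + \mathbf{i}\,\mathrm{Im}(f)$, the splitting $T_n[f] = T_n[\mathrm{Re}(f)] + \mathbf{i}\,T_n[\mathrm{Im}(f)]$ is the Hermitian/anti-Hermitian decomposition of the complex matrix $T_n[f]$, because $T_n[\mathrm{Re}(f)]$ and $T_n[\mathrm{Im}(f)]$ are Hermitian whenever their generators are real-valued. Since $A_n$ is assumed real, this must coincide with the real symmetric/skew-symmetric decomposition, so $A_R = T_n[\mathrm{Re}(f)]$ and $\mathcal{S}(A_n) = \mathbf{i}\,T_n[\mathrm{Im}(f)]$. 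The eigenvalues of $\mathcal{S}(Z) = \mathbf{i}\, A_R^{-1/2} T_n[\mathrm{Im}(f)] A_R^{-1/2}$ then take the form $\mathbf{i}\mu_j$, where the real $\mu_j$ are eigenvalues of the Hermitian matrix $A_R^{-1/2} T_n[\mathrm{Im}(f)] A_R^{-1/2}$, which is similar to $T_n[\mathrm{Re}(f)]^{-1} T_n[\mathrm{Im}(f)]$. Lemma \ref{lemma:Toeplitz_localization}, applied with $f \leftarrow \mathrm{Im}(f)$ and $g \leftarrow \mathrm{Re}(f)$ (essentially positive by hypothesis), locates each $\mu_j$ between $\operatorname{ess\,inf}(\mathrm{Im}(f)/\mathrm{Re}(f))$ and $\operatorname{ess\,sup}(\mathrm{Im}(f)/\mathrm{Re}(f))$; both endpoints have modulus at most $\epsilon$, so $\rho(\mathcal{S}(Z)) = \max_j |\mu_j| \leq \epsilon$.

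I expect the only genuinely non-mechanical step to be the identification $A_R = T_n[\mathrm{Re}(f)]$ and $\mathcal{S}(A_n) = \mathbf{i}\,T_n[\mathrm{Im}(f)]$. This rests on the observation that realness of $A_n$ forces the Fourier coefficients of $f$ to be real, hence $\mathrm{Re}(f)$ is even and $\mathrm{Im}(f)$ is odd in $\boldsymbol{\theta}$; that in turn makes $T_n[\mathrm{Re}(f)]$ real symmetric and $\mathbf{i}\,T_n[\mathrm{Im}(f)]$ real skew-symmetric, so the Hermitian/anti-Hermitian decomposition over $\mathbb{C}$ actually matches the $\mathcal{H}/\mathcal{S}$ decomposition over $\mathbb{R}$. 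Once this is in place, Lemma \ref{lemma:Toeplitz_localization} and Lemma \ref{lemma:gmres} combine to produce the stated $\omega^k$ decay.
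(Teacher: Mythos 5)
Your proposal is correct and follows essentially the same route as the paper: identify $\mathcal{H}(A_R^{-1/2}A_nA_R^{-1/2})=I$, bound $\rho(\mathcal{S}(A_R^{-1/2}A_nA_R^{-1/2}))\le\epsilon$ via Lemma \ref{lemma:Toeplitz_localization} applied to $T_n[\mathrm{Re}(f)]^{-1}T_n[\mathrm{Im}(f)]$, and feed these into Lemma \ref{lemma:gmres}. The only difference is cosmetic: you spell out the realness argument showing $A_R=T_n[\mathrm{Re}(f)]$ and $\mathcal{S}(A_n)=\mathbf{i}\,T_n[\mathrm{Im}(f)]$ (which the paper uses implicitly) and you invoke monotonicity of $t\mapsto t/\sqrt{1+t^2}$ rather than substituting $\epsilon$ directly into the Elman--Silvester--Wathen bound.
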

\begin{proof}
    First of all, since 
    \begin{eqnarray*}
        \mathcal{H} \left( \mathcal{H}(A_{n})^{-\frac{1}{2}}A_{n} \mathcal{H}(  A_{n})^{-\frac{1}{2}} \right)
        &=& A_R^{-\frac{1}{2}} \mathcal{H} (A_{n}) A_R^{-\frac{1}{2}}\\
        &=& I_n\\
        &\succ& \mathcal{O},
    \end{eqnarray*}
         Lemma \ref{lemma:gmres} can be used.

    Also, $\mathcal{S} ( A_R^{-\frac{1}{2}}A_{n}A_R^{-\frac{1}{2}} )=A_R^{-\frac{1}{2}} \mathcal{S} (A_{n}) A_R^{-\frac{1}{2}}$, 
    we know by Lemma \ref{lemma:Toeplitz_localization} that
        \begin{eqnarray*}
        \rho \left( A_R^{-\frac{1}{2}} \mathcal{S} (A_{n}) A_R^{-\frac{1}{2}} \right)&=&\rho \left( T_n[\mathrm{Re}(f)]^{-\frac{1}{2}} \left( \textbf{i} T_n[\mathrm{Im}(f)] \right)T_n[\mathrm{Re}(f)]^{-\frac{1}{2}} \right)\\
        &=&\rho \left( T_n[\mathrm{Re}(f)]^{-\frac{1}{2}} \left( T_n[\mathrm{Im}(f)] \right)T_n[\mathrm{Re}(f)]^{-\frac{1}{2}} \right)\\
        &\leq& \epsilon:=\operatorname{ess\,sup}_{\boldsymbol{\theta}  \in [-\pi, \pi]^p}\left|\frac{\mathrm{Im}(f)(\boldsymbol{\theta}) }{\mathrm{Re}(f)(\boldsymbol{\theta}) }\right|.
        \end{eqnarray*}

    Thus, by Lemma \ref{lemma:gmres}, the residuals of the iterates generated by applying (restarted or non-restarted) GMRES with an arbitrary initial guess to solve $A_R^{-\frac{1}{2}}  A_{n} A_R^{-\frac{1}{2}}\mathbf{v}=A_R^{-\frac{1}{2}}\mathbf{w}$ satisfy
    \begin{eqnarray*}
    \| \mathbf{r}_k \|_2 &\leq& \left( \sqrt{1 - \frac{1}{1 + \epsilon^2} }     \right)^{k} \| \mathbf{r}_0 \|_2\\
    &=& \left( \frac{\epsilon}{\sqrt{1 + \epsilon^2}}   \right)^{k} \| \mathbf{r}_0 \|_2.
     \end{eqnarray*}
\end{proof}

As a consequence of Theorem \ref{conv_gmres_1}, $\mathcal{H}(A_{n})$ can serve as an ideal preconditioner for $A_{n}$, despite its implementation challenges. Its optimal preconditioning efficacy is governed by the intrinsic quantity $\epsilon = \operatorname{ess\,sup}_{\boldsymbol{\theta} \in [-\pi, \pi]^p} \left|\frac{\mathrm{Im}(f)(\boldsymbol{\theta})}{\mathrm{Re}(f)(\boldsymbol{\theta})}\right|$, which depends solely on the function $f$. The smaller the value of $\epsilon$, the more effective the preconditioning effect for $A_{n}$. This result is broad in scope, suggesting that $\mathcal{H}(A_{n})$ can serve as a blueprint for the development of efficient preconditioners for various applications. To elucidate this point, the subsequent section will demonstrate how our proposed preconditioning approach can be effectively applied to solving a class of non-local evolutionary partial differential equations.

\section{Applications to non-local evolutionary partial differential equations}\label{sec:PDE_application}

Consider the following non-local evolutionary PDEs with weakly singular kernels
\begin{equation}\label{main_problem}
	\begin{cases}
		\frac{1}{\Gamma(1-\alpha)}\int_{0}^{t}\frac{\partial u(\boldsymbol{x},s)}{\partial s}(t-s)^{-\alpha}ds=\mathcal{L}u(\boldsymbol{x},t)+f(\boldsymbol{x},t),\quad \boldsymbol{x}\in \Omega\subset\mathbb{R}^d,~ t\in(0,T],\\
		u(\boldsymbol{x},t)=0,\quad \boldsymbol{x}\in\partial \Omega,~t\in(0,T],\\
		u(\boldsymbol{x},0)=\psi(\boldsymbol{x}),\quad \boldsymbol{x}\in \Omega,
	\end{cases}
\end{equation}
where $\Gamma(\cdot)$ is the Gamma function, $0< \alpha< 1$,  $f$ and $\psi$ are both given functions; the boundary of $\Omega$ is $\partial\Omega$; $\Omega=\prod_{i=1}^{d}(\check{a}_i,\hat{a}_i)$;  $\boldsymbol{x}=(x_1,x_2,...,x_d)$ is a point in $\mathbb{R}^d$; for $i=1,\dots,d$; the spatial operator $\mathcal{L}$ can be (but not limit to) the following choices
\begin{equation*}
\mathcal{L}=\left\{
\begin{aligned}
&\Delta,~~~~~\text{constant Laplacian},\\
&\sum_{i=1}^d c_i \frac{\partial^{\beta_i}}{\partial\left|x_i\right|^{\beta_i}},~~~~~\text{Riesz derivative with $\beta_i \in (1,2)$,}\\
&\sum_{i=1}^d \left(k_{i,+} \frac{\partial^{\beta_i}}{\partial_{+} x_i^{\beta_i}}+k_{i,-}\frac{\partial^{\beta_i}}{\partial_{-} x_i^{\beta_i}}\right),~~~~~\text{Riemann-Liouville derivative with $\beta_i \in (1,2)$.}
\end{aligned}
\right.
\end{equation*}

Numerical methods for solving evolutionary PDEs in the form of (\ref{main_problem}) with the aforementioned exemplary selections of spatial operator $\mathcal{L}$ have been intensively discussed; see \cite{jin2016analysis,gan2024efficient,zhu2024highly,chen2009finite,lin2024single,lin2018separable,pang2016fourth,lin2019fast,vong2016high} and the references therein.

\subsection{Temporal discretization}
For any nonnegative integer $m$, $n$ with $m\leq n$, define the set $m\wedge n:=\{m,m+1,...,n-1,n\}$. 

Denote by $\mathbb{N}^{+}$ the set of all positive integers. Let $N\in\mathbb{N}^{+}$ and the temporal stepsize $\mu=T/N$. With the L1 scheme (see, e.g.,{ \cite{sun2006fully,lin2007finite,liao2018sharp,jin2016analysis})}, the temporal discretization has the following form
\begin{align}
	&	\frac{1}{\Gamma(1-\alpha)}\int_{0}^{t}\frac{\partial u(\boldsymbol{x},s)}{\partial s}(t-s)^{-\alpha}ds\Big|_{t=n\mu}= \sum\limits_{k=1}^{n}l_{n-k}^{(\alpha)}u(\boldsymbol{x},k\mu)+l^{(n,\alpha)}\psi(\boldsymbol{x})+\mathcal{O}(\mu^{2-\alpha}),\label{convlqdra}
\end{align}
where $\boldsymbol{x}$ $ \in \Omega$, $k\in 1\wedge N$ and 
\begin{align*}
	&l_{k}^{(\alpha)}=\begin{cases}
		\kappa a_0,\quad k=0,\\
		\kappa(a_k-a_{k-1}),\quad k\in 1\wedge (N-1),
	\end{cases}\notag\\
	&l^{(n,\alpha)}=-\kappa a_{n-1},\quad n\in 1\wedge N,\notag 
\end{align*}
with $\kappa=\frac{1}{\Gamma(2-\alpha)\mu^{\alpha}}$ and $a_j=(j+1)^{1-\alpha}-j^{1-\alpha}$, $j \geq 0$.

\subsection{All-at-once system}
Suppose a uniform spatial discretization with stepsize $h_i=(\hat{a}_i-\check{a}_i)/(m_i+1)$ for $k\in 1\wedge d$ is adopted, and the spatial discretization matrix $G_J\in \mathbb{R}^{J \times J} $ with $J=\prod\limits_{i=1}^{d}m_i$ for $-\mathcal{L}$ is a multilevel ($d$-level) Toeplitz matrix associated with the function $w_{\boldsymbol{\beta}}(\boldsymbol{\theta})$ and $\mathcal{H}(G)$ is SPD. 

Denote
\begin{equation*}
	m_1^{-}=m_d^{+}=1,\quad m_i^{-}=\prod\limits_{j=1}^{i-1}m_i,~i\in 2\wedge d,\quad  m_k^{+}=\prod\limits_{j=k+1}^{d}m_j,~k\in 1\wedge (d-1).
\end{equation*}
With the spatial discretization matrix $G_J$ and the temporal discretization \eqref{convlqdra}, we obtain the following all-at-once linear system as a discretization of the problem \eqref{main_problem}
\begin{equation}\label{aaosystem}
	A{\bf u}:= \left( G_J \otimes I_N + I_J \otimes \kappa B^{(\alpha)}_N \right) {\bf u}={\bf f},
\end{equation}
where ${\bf u}=({\bf u}_1;{\bf u}_2;\dots;{\bf u}_{J}) \in \mathbb{R}^{NJ\times 1}$, ${\bf f}=({\bf f}_1;{\bf f}_2;\dots;{\bf f}_{J}) \in \mathbb{R}^{NJ\times 1}$;
$I_{k}$ denotes a $k\times k$ identity matrix; the lower triangular Toeplitz matrix  $B^{(\alpha)}_N\in\mathbb{R}^{N\times N}$ denotes the temporal discretization matrix, 
namely,
\begin{eqnarray}
\label{eqn:matrix_time_fractional}\nonumber
B^{(\alpha)}_N &:=&\left[
\begin{array}
[c]{ccccc}
l_0^{(\alpha)} & & & &\\
l_1^{(\alpha)}  & l_0^{(\alpha)} & & &\\
\vdots&\ddots &\ddots& &\\
l_{N-2}^{(\alpha)} & \ldots&\ddots& \ddots & \\
l_{N-1}^{(\alpha)} & l_{N-2}^{(\alpha)}& \ldots & l_1^{(\alpha)} & l_0^{(\alpha)}
\end{array}
\right]\\
&=&\left[
\begin{array}
[c]{ccccc}
a_0 & & & &\\
a_1-a_0 & a_0 & & &\\
\vdots&\ddots &\ddots& &\\
a_{N-2}-a_{N-3} & \ldots&\ddots& \ddots & \\
a_{N-1}-a_{N-2} & a_{N-2}-a_{N-3}& \ldots & a_1-a_0 & a_0
\end{array}
\right].
\end{eqnarray}

The generating function \cite{ng2004iterative,jin2010preconditioning} of $B^{(\alpha)}_N$ is given by
\begin{equation*}
g_{\alpha}( \phi )=a_0+\sum_{j=1}^{\infty}\left(a_j-a_{j-1}\right) e^{\mathbf{i} j \phi}.
\end{equation*}
Thus, the all-at-once matrix $A$ is associated with the following $d+1$-variate complex-valued function
\begin{equation}\label{eqn:main_gen_function}
    f_{\alpha, \boldsymbol{\beta}}(\phi,\boldsymbol{\theta}) = w_{\boldsymbol{\beta}}(\boldsymbol{\theta}) + \kappa g_{\alpha}(\phi),
\end{equation} in the sense that 
\begin{equation*}
    A = T_{J}[w_{\boldsymbol{\beta}}(\boldsymbol{\theta})]  \otimes I_N + I_J \otimes \kappa T_{N}[g_{\alpha}(\phi)].   
\end{equation*}
Evidently, for a fixed matrix size (i.e., when both $N$ and $J$ are kept fixed), the elements of $A$ are determined by the Fourier coefficients of $f_{\alpha, \boldsymbol{\beta}}(\phi,\boldsymbol{\theta})$.

Even though the matrix $G_J$ may be symmetric, the all-at-once matrix $A$ in (\ref{aaosystem}) is nonsymmetric due to the fact that $B^{(\alpha)}_N$ is a lower triangular matrix. Consequently, some commonly used Krylov subspace methods, such as the conjugate gradient method and MINRES, are not directly applicable. Therefore, to address this issue, we employ the GMRES method to solve the nonsymmetric linear system.

As the direct application of Section \ref{sec:multilevel Toeplitz system}, we follow our proposed preconditioning approach and construct the following preconditioner for $A$: 
\begin{equation}\label{Hermitian_pre}
    \hat P := \mathcal{H}(A)=  \mathcal{H}(G_J) \otimes I_N + I_J \otimes \kappa \mathcal{H}(B^{(\alpha)}_N).    
\end{equation}
We note that $\mathcal{H}(B^{(\alpha)}_N)$ is strictly diagonally dominant and hence SPD \cite{lin2018separable}, then $\hat P$ is also SPD and hence $\hat P^{-\frac{1}{2}}$ exists \cite{lin2021all}. The convergence of GMRES solver with $\hat P$ will be discussed in subsection \ref{sec:ideal_sym}.

Since $\hat P$ cannot be efficient implemented, in subsection \ref{sec:modified_P}, we will further propose a more practical preconditioner $P$ for $A$ and show that $P$ does not only facilitates efficient implementation but also leads to optimal convergence when GMRES is employed, aligning our proposed preconditioning theory (i.e., Theorem \ref{conv_gmres_1}).

\section{Main results}\label{sec:main}

The main results are divided into the following subsections.

\subsection{Convergence analysis of the ideal preconditioner $\hat P$}\label{sec:ideal_sym}

Before giving the result of the proposed preconditioner $\mathcal{H}( T_n[f] )$, the following lemma about $g_{\alpha}( \phi )$ and assumption on the function $w_{\boldsymbol{\beta}}(\boldsymbol{\theta})$ are needed to facilitate our analysis.

\begin{lemma}\label{l1_quotient}
For $\alpha\in(0,1)$ and $\forall \phi \in \mathbb{R} \backslash\{2k\pi \mid k \in \mathbb{Z}\}$, $\operatorname{Re}(g_{\alpha}( \phi ))>0$ and
\begin{equation*}
\frac{\left|\operatorname{Im}(g_{\alpha}( \phi ))\right|}{\operatorname{Re}(g_{\alpha}( \phi ))}<\tan{\left( \frac{\alpha}{2} \pi \right)}.
\end{equation*}
\end{lemma}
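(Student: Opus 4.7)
The plan is to derive an integral representation of $g_\alpha(\phi)$, from which the positivity of $\operatorname{Re}(g_\alpha(\phi))$ will be immediate, and then establish the sharp ratio bound by a change of variables that reveals a limiting Mellin transform.

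First, using $a_j=(1-\alpha)\int_j^{j+1}s^{-\alpha}\,ds$ together with the Euler integral $s^{-\alpha}=\Gamma(\alpha)^{-1}\int_0^\infty t^{\alpha-1}e^{-st}\,dt$, a Fubini swap yields $a_j=\frac{1-\alpha}{\Gamma(\alpha)}\int_0^\infty t^{\alpha-2}e^{-jt}(1-e^{-t})\,dt$. Since $g_\alpha(\phi)=(1-e^{i\phi})\sum_{j\ge 0}a_j e^{ij\phi}$ (which automatically handles the $a_0$ term), summing the geometric series gives
\begin{equation*}
g_\alpha(\phi)=\frac{1-\alpha}{\Gamma(\alpha)}\int_0^\infty t^{\alpha-2}\,\frac{(1-e^{i\phi})(1-e^{-t})}{1-e^{i\phi}e^{-t}}\,dt.
\end{equation*}
Next, I would apply the algebraic identity $\frac{(1-e^{i\phi})(1-e^{-t})}{1-e^{i\phi}e^{-t}}=\frac{2}{\coth(t/2)+i\cot(\phi/2)}$ (checked by multiplying numerator and denominator by $\sinh(t/2)\sin(\phi/2)$) and rationalize the denominator to read off
\begin{equation*}
\operatorname{Re}(g_\alpha(\phi))=\frac{2(1-\alpha)}{\Gamma(\alpha)}\int_0^\infty\frac{t^{\alpha-2}\coth(t/2)}{\coth^2(t/2)+\cot^2(\phi/2)}\,dt,
\end{equation*}
\begin{equation*}
\operatorname{Im}(g_\alpha(\phi))=-\frac{2(1-\alpha)}{\Gamma(\alpha)}\cot(\phi/2)\int_0^\infty\frac{t^{\alpha-2}}{\coth^2(t/2)+\cot^2(\phi/2)}\,dt.
\end{equation*}
Since $(1-\alpha)/\Gamma(\alpha)>0$ and $\coth(t/2)>0$ on $(0,\infty)$, the positivity $\operatorname{Re}(g_\alpha(\phi))>0$ is immediate.

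For the ratio bound, substituting $u=\tanh(t/2)$ followed by $v=p u$ with $p:=|\cot(\phi/2)|$ reduces $|\operatorname{Im}(g_\alpha)/\operatorname{Re}(g_\alpha)|$ to $\int_0^p v^2 W(v)\,dv/\int_0^p v W(v)\,dv$ where $W(v)=(\operatorname{arctanh}(v/p))^{\alpha-2}/[(1+v^2)(1-v^2/p^2)]$. As $p\to\infty$ (equivalently $\phi\to 0$), $W(v)$ converges (up to a common scaling) to $v^{\alpha-2}/(1+v^2)$ on $(0,\infty)$, and the limiting ratio equals exactly $\tan(\alpha\pi/2)$ by the Mellin identity $\int_0^\infty v^{\beta-1}/(1+v^2)\,dv=\pi/(2\sin(\beta\pi/2))$ (valid for $0<\beta<2$), applied with $\beta=\alpha+1$ in the numerator and $\beta=\alpha$ in the denominator. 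The case $\phi=\pi$ is trivial because $\cot(\phi/2)=0$ forces $\operatorname{Im}(g_\alpha)=0$.

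The main obstacle is proving the strict inequality for finite $p$: the constant $\tan(\alpha\pi/2)$ is sharp, attained only in the limit $\phi\to 0^+$ (where $g_\alpha(\phi)\sim\Gamma(2-\alpha)\phi^\alpha e^{-i\alpha\pi/2}$), so no triangle-inequality estimate on the integrand is strong enough. A careful comparison of $W$ against its pointwise limit, combined with a monotonicity argument in $p$ for the ratio, is needed to confirm strictness for all $\phi\not\equiv 0\pmod{2\pi}$.
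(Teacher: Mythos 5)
Your integral representation $g_\alpha(\phi)=\frac{1-\alpha}{\Gamma(\alpha)}\int_0^\infty t^{\alpha-2}\frac{(1-e^{\mathbf{i}\phi})(1-e^{-t})}{1-e^{\mathbf{i}\phi}e^{-t}}\,dt$ is correct, as is the algebraic reduction to $\frac{2}{\coth(t/2)+\mathbf{i}\cot(\phi/2)}$; this cleanly proves the positivity claim $\operatorname{Re}(g_\alpha(\phi))>0$, and your Mellin computation correctly identifies $\tan(\alpha\pi/2)$ as the sharp constant achieved in the limit $\phi\to 0^+$. However, as you yourself note in your last paragraph, you have not proved the actual statement: you need $\left|\operatorname{Im}(g_\alpha)\right|/\operatorname{Re}(g_\alpha)<\tan(\alpha\pi/2)$ for \emph{every} $\phi\not\equiv 0\pmod{2\pi}$, and your argument only establishes the value of the supremum as $p:=|\cot(\phi/2)|\to\infty$. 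The remaining step is not a routine detail. In your reduced form the inequality is equivalent to $\int_0^\infty \frac{t^{\alpha-2}\bigl(p-\tan(\alpha\pi/2)\coth(t/2)\bigr)}{\coth^2(t/2)+p^2}\,dt<0$, whose integrand changes sign once $p>\tan(\alpha\pi/2)$, so no pointwise comparison suffices; and the monotonicity-in-$p$ of the ratio $\int_0^p v^2 W_p(v)\,dv/\int_0^p v W_p(v)\,dv$ that you invoke is unproven and not evidently true, since the weight $W_p$ itself moves with $p$ (both through $\operatorname{arctanh}(v/p)$ and through $1-v^2/p^2$). As it stands, this is a genuine gap rather than an omitted computation.

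For contrast, the paper avoids this difficulty entirely by a different representation: it writes $g_\alpha(\phi)=(1-e^{\mathbf{i}\phi})^2\Phi(e^{\mathbf{i}\phi},\alpha-1,1)$ and invokes the Hurwitz-type analytic continuation of the Lerch transcendent, which yields, for $\phi\in(0,\pi]$,
\begin{align*}
\operatorname{Re}(g_\alpha(\phi))&=C(\phi,\alpha)\cos\!\left(\tfrac{\alpha\pi}{2}\right)\sum_{n\ge 0}\Bigl[(2n\pi+\phi)^{\alpha-2}+(2(n+1)\pi-\phi)^{\alpha-2}\Bigr],\\
\left|\operatorname{Im}(g_\alpha(\phi))\right|&=C(\phi,\alpha)\sin\!\left(\tfrac{\alpha\pi}{2}\right)\sum_{n\ge 0}\Bigl[(2n\pi+\phi)^{\alpha-2}-(2(n+1)\pi-\phi)^{\alpha-2}\Bigr],
\end{align*}
with $C(\phi,\alpha)>0$. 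Since each bracketed difference is strictly smaller than the corresponding sum, the ratio bound $<\tan(\alpha\pi/2)$ drops out term by term, with no sign-change or monotonicity issue to resolve. That is precisely the structural advantage you lose by going through the Laplace integral: your representation packages the positive and negative contributions into a single sign-changing kernel, whereas the Hurwitz expansion separates them. To salvage your route you would need to actually establish a monotonicity (or rearrangement) lemma showing the reduced ratio increases to its limit, and that is likely at least as much work as the paper's series argument.
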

 \begin{proof}
For $\alpha\in \mathbbm{C}$ with $\textrm{Re}(\alpha)>0$, it is ready to show that $|a_{j}-a_{j-1}|=\mathcal{O}(j^{-1-\textrm{Re}(\alpha)})$, such that $g_{\alpha}( \phi )$ is analytic on $\{s\in\mathbbm{C}\big|\textrm{Re}(s)>0\}$ with respect to $\alpha$. Hence with the analytic continuation of the Lerch transcendent \cite{bateman1953higher} $\Phi(z,s,\nu)=\sum_{k=0}^{\infty}(k+\nu)^{-s}z^{k}$ that
\[\Phi\vert_{S_{1}}(z,s,\nu)=z^{-\nu}\Gamma(1-s)\sum\limits_{n=-\infty}^{\infty}(-\log z+2n\pi \mathbf{i})^{s-1}e^{2n\pi \mathbf{i}\nu},\]
for $z\in\mathbb{C}\setminus[1,\infty),~\lvert\arg(-\log z+2n\pi \mathbf{i})\rvert\leq\pi,~0<\nu\leq 1,~s\in S_{1}=\{s\in\mathbb{C}\big\vert\textrm{Re}~s<0\}$, we have for $\alpha\in\{s\in\mathbb{C}\big\vert 0<\textrm{Re}~s<1\},~\phi\in(0,\pi]$,
\begin{align*}
&g_{\alpha}( \phi )\\
&=(1-e^{\mathbf{i}\phi})^{2}\Phi(e^{\mathbf{i}\phi},\alpha-1,1)\\
&=-4\sin^{2}{\left(\frac{\phi}{2}\right)}\Gamma(2-\alpha)\sum\limits_{n=-\infty}^{\infty}(-\mathbf{i}\phi+2n\pi \mathbf{i})^{\alpha-2}\\
&=-4\sin^{2}{\left(\frac{\phi}{2}\right)}\Gamma(2-\alpha)\sum\limits_{n=0}^{\infty}\Big((2n\pi+\phi)^{\alpha-2}e^{-\frac{\mathbf{i}(\alpha-2)\pi}{2}}+(2(n+1)\pi-\phi)^{\alpha-2}e^{\frac{\mathbf{i}(\alpha-2)\pi}{2}}\Big)\\
&=4\sin^{2}{\left(\frac{\phi}{2}\right)}\Gamma(2-\alpha)\sum\limits_{n=0}^{\infty}\Big((2n\pi+\phi)^{\alpha-2}+(2(n+1)\pi-\phi)^{\alpha-2}\Big)\cos{\left( \frac{\alpha}{2} \pi \right)}\\
&\hspace{30pt}-\mathbf{i}4\sin^{2}{\left(\frac{\phi}{2}\right)}\Gamma(2-\alpha)\sum\limits_{n=0}^{\infty}\Big((2n\pi+\phi)^{\alpha-2}-(2(n+1)\pi-\phi)^{\alpha-2}\Big)\sin{\left( \frac{\alpha}{2} \pi \right)}.
\end{align*}
In particular for $\alpha\in(0,1)$, we have $\forall\phi\in(0,\pi]$,
\begin{eqnarray*}
&&\operatorname{Re}(g_{\alpha}( \phi ))\\
&=&4 \sin^2{\left(\frac{\phi}{2}\right)}\Gamma(2-\alpha) \sum_{n=0}^{\infty}\left((2 n \pi+\phi)^{\alpha-2}+(2(n+1) \pi-\phi)^{\alpha-2}\right) \cos{\left( \frac{\alpha}{2} \pi \right)} \\
&>&0,
\end{eqnarray*}
and
\[\frac{\left|\operatorname{Im}(g_{\alpha}( \phi ))\right|}{\operatorname{Re}(g_{\alpha}( \phi ))}=\dfrac{\sum\limits_{n=0}^{\infty}\Big((2n\pi+\phi)^{\alpha-2}-(2(n+1)\pi-\phi)^{\alpha-2}\Big)}{\sum\limits_{n=0}^{\infty}\Big((2n\pi+\phi)^{\alpha-2}+(2(n+1)\pi-\phi)^{\alpha-2}\Big)}\tan{\left( \frac{\alpha}{2} \pi \right)}<\tan{\left( \frac{\alpha}{2} \pi \right)}.\]
Similarly, we have $\forall\phi\in[-\pi,0)$,
\begin{equation*}
\operatorname{Re}(g_{\alpha}( \phi ))=\operatorname{Re}(g_{\alpha}( -\phi ))>0
\end{equation*}
and
\[\frac{\left|\operatorname{Im}(g_{\alpha}( \phi ))\right|}{\operatorname{Re}(g_{\alpha}( \phi ))}=\dfrac{\left|-\textrm{Im}(g_{\alpha}(-\phi ))\right|}{\textrm{Re}(g_{\alpha}(-\phi ))}<\tan{\left( \frac{\alpha}{2} \pi \right)}.\]
Hence, by periodicity, we have $\forall \phi \in \mathbb{R} \backslash\{2 k \pi \mid k \in \mathbb{Z}\}$
\begin{equation*}
\operatorname{Re}(g_{\alpha}( \phi ))>0 \quad \text {and}\quad \frac{\left|\operatorname{Im}(g_{\alpha}( \phi ))\right|}{\operatorname{Re}(g_{\alpha}( \phi ))}<\tan{\left( \frac{\alpha}{2} \pi \right)}.
\end{equation*}
 \end{proof}

\begin{assumption}\label{assumption_1}
$\operatorname{Re}(w_{\boldsymbol{\beta}}(\boldsymbol{\theta}))$ is essentially positive and $\left|\operatorname{Im}(w_{\boldsymbol{\beta}}(\boldsymbol{\theta}))\right| \leq \mu_{\beta} \operatorname{Re}(w_{\boldsymbol{\beta}}(\boldsymbol{\theta}))$ with positive constant $\mu_{\beta}$ independent of matrix size $J$.
\end{assumption}
The above assumption can be easily met. It is easy to check that the discretization matrices for $\Delta$ and $\sum_{i=1}^d c_i \frac{\partial^{\beta_i}}{\partial\left|x_i\right|^{\beta_i}}$ are SPD with real and essentially positive generating function \cite{hon2024sine,pang2016fast,vong2019second,mazza2021symbol,li2023preconditioning,ding2023high}. In this case, $\left|\operatorname{Im}(w_{\boldsymbol{\beta}}(\boldsymbol{\theta}))\right|=0$. As for the operator $\sum_{i=1}^d \left(k_{i,+} \frac{\partial^{\beta_i}}{\partial_{+} x_i^{\beta_i}}+k_{i,-}\frac{\partial^{\beta_i}}{\partial_{-} x_i^{\beta_i}}\right)$, commonly used discretizations such as the shifted Gr{\"u}nwald-formula \cite{meerschaert2004finite}, and the weighted and shifted Gr{\"u}nwald-Letnikov difference (WSGD) formulas \cite{tian2015class,hao2015fourth}, can generate nonsymmetric Toeplitz discretization matrices. These matrices have generating functions that satisfy the above assumptions \cite{pang2016fast,lin2023tau,vong2019second,mazza2021symbol,tang2024fast}. 

\begin{proposition}\label{prop:main}
Let $f_{\alpha, \boldsymbol{\beta}}(\phi,\boldsymbol{\theta})$ be defined in \eqref{eqn:main_gen_function}. Then,
\[
\operatorname{ess\,sup}_{(\phi,\boldsymbol{\theta}) \in[-\pi, \pi]^{d+1}} \left|\frac{ \mathrm{Im}(f_{\alpha, \boldsymbol{\beta}}(\phi,\boldsymbol{\theta})) }{ \mathrm{Re}(f_{\alpha, \boldsymbol{\beta}}(\phi,\boldsymbol{\theta}))  }\right| \leq \eta,
\]
where
\begin{eqnarray*}
\eta= &=& \left\{
\begin{aligned}
&\tan \left( \frac{\alpha}{2} \pi\right),~~~~~\text{if}~G_J~\text{is~symmetric,}\\
&\max\left\{\mu_{\beta}, \tan \left( \frac{\alpha}{2} \pi\right)\right\}, ~~~~~\text{if}~G_J~\text{is nonsymmetric,}
\end{aligned}
\right.
\end{eqnarray*}
with $\mu_{\beta}$ defined in Assumption \ref{assumption_1}.

\end{proposition}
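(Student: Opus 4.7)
The plan is to split the essential supremum according to whether the spatial discretization matrix $G_J$ is symmetric or not, and in each case reduce the bound on $|\mathrm{Im}(f_{\alpha,\boldsymbol{\beta}})|/\mathrm{Re}(f_{\alpha,\boldsymbol{\beta}})|$ to the known bounds on $|\mathrm{Im}(g_\alpha)|/\mathrm{Re}(g_\alpha)$ from Lemma \ref{l1_quotient} and on $|\mathrm{Im}(w_{\boldsymbol{\beta}})|/\mathrm{Re}(w_{\boldsymbol{\beta}})$ from Assumption \ref{assumption_1}. The key elementary tool will be the weighted-sum inequality in Lemma \ref{lemma:wghtsumbdlem}.

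First I would separate the real and imaginary parts of $f_{\alpha,\boldsymbol{\beta}}(\phi,\boldsymbol{\theta}) = w_{\boldsymbol{\beta}}(\boldsymbol{\theta}) + \kappa g_\alpha(\phi)$, writing
\[
\mathrm{Re}(f_{\alpha,\boldsymbol{\beta}}) = \mathrm{Re}(w_{\boldsymbol{\beta}}) + \kappa\,\mathrm{Re}(g_\alpha), \qquad \mathrm{Im}(f_{\alpha,\boldsymbol{\beta}}) = \mathrm{Im}(w_{\boldsymbol{\beta}}) + \kappa\,\mathrm{Im}(g_\alpha).
\]
Note that $\kappa>0$, that $\mathrm{Re}(w_{\boldsymbol{\beta}})$ is essentially positive by Assumption \ref{assumption_1}, and that $\mathrm{Re}(g_\alpha(\phi))>0$ for $\phi\in\mathbb{R}\setminus\{2k\pi\}$ by Lemma \ref{l1_quotient}; hence the denominator is essentially positive and the quotient is well defined almost everywhere.

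In the symmetric case, $G_J$ being symmetric forces $w_{\boldsymbol{\beta}}$ to be real-valued, so $\mathrm{Im}(w_{\boldsymbol{\beta}})\equiv 0$. Then I would drop the nonnegative term $\mathrm{Re}(w_{\boldsymbol{\beta}})$ from the denominator to obtain
\[
\frac{|\mathrm{Im}(f_{\alpha,\boldsymbol{\beta}})|}{\mathrm{Re}(f_{\alpha,\boldsymbol{\beta}})} = \frac{\kappa|\mathrm{Im}(g_\alpha)|}{\mathrm{Re}(w_{\boldsymbol{\beta}}) + \kappa\,\mathrm{Re}(g_\alpha)} \leq \frac{|\mathrm{Im}(g_\alpha)|}{\mathrm{Re}(g_\alpha)} < \tan\!\left(\tfrac{\alpha}{2}\pi\right),
\]
where the last inequality is Lemma \ref{l1_quotient}. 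In the nonsymmetric case, I would first apply the triangle inequality $|\mathrm{Im}(f_{\alpha,\boldsymbol{\beta}})|\leq |\mathrm{Im}(w_{\boldsymbol{\beta}})|+\kappa|\mathrm{Im}(g_\alpha)|$ and then invoke Lemma \ref{lemma:wghtsumbdlem} with $\xi_1=|\mathrm{Im}(w_{\boldsymbol{\beta}})|$, $\zeta_1=\mathrm{Re}(w_{\boldsymbol{\beta}})$, $\xi_2=\kappa|\mathrm{Im}(g_\alpha)|$, $\zeta_2=\kappa\mathrm{Re}(g_\alpha)$, yielding
\[
\frac{|\mathrm{Im}(f_{\alpha,\boldsymbol{\beta}})|}{\mathrm{Re}(f_{\alpha,\boldsymbol{\beta}})} \leq \max\!\left\{\frac{|\mathrm{Im}(w_{\boldsymbol{\beta}})|}{\mathrm{Re}(w_{\boldsymbol{\beta}})},\; \frac{|\mathrm{Im}(g_\alpha)|}{\mathrm{Re}(g_\alpha)}\right\} \leq \max\!\left\{\mu_\beta,\; \tan\!\left(\tfrac{\alpha}{2}\pi\right)\right\}
\]
by Assumption \ref{assumption_1} and Lemma \ref{l1_quotient}, giving the claimed bound after taking the essential supremum over $(\phi,\boldsymbol{\theta})\in[-\pi,\pi]^{d+1}$.

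I do not expect a genuine obstacle here: once Lemmas \ref{lemma:wghtsumbdlem} and \ref{l1_quotient} and Assumption \ref{assumption_1} are in hand the proof is essentially an algebraic manipulation. The only subtlety worth checking carefully is that the weighted-sum inequality is applied with strictly positive $\zeta_i$ almost everywhere, which is exactly guaranteed by the essential positivity of $\mathrm{Re}(w_{\boldsymbol{\beta}})$ and the positivity of $\mathrm{Re}(g_\alpha)$ off the zero-measure set $\{\phi\in 2\pi\mathbb{Z}\}$, so that the essential supremum is unaffected.
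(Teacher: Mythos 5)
Your proof is correct and follows essentially the same route as the paper: triangle inequality on the imaginary part, then in the symmetric case drop the nonnegative $\mathrm{Re}(w_{\boldsymbol{\beta}})$ term from the denominator, and in the nonsymmetric case bound the ratio of sums by the max of the componentwise ratios, finishing with Lemma \ref{l1_quotient} and Assumption \ref{assumption_1}. The only cosmetic difference is that you explicitly invoke Lemma \ref{lemma:wghtsumbdlem} for the weighted-sum step and note the positivity conditions needed for it, whereas the paper applies that inequality silently.
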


\begin{proof}

We have
  \begin{eqnarray*}
    \left|\frac{ \mathrm{Im}(f_{\alpha, \boldsymbol{\beta}}(\phi,\boldsymbol{\theta})) }{ \mathrm{Re}(f_{\alpha, \boldsymbol{\beta}}(\phi,\boldsymbol{\theta}))  }\right| &=& \frac{| \mathrm{Im}(w_{\boldsymbol{\beta}}(\boldsymbol{\theta}) + \kappa g_{\alpha}(\phi)  )  |}{ \mathrm{Re}(w_{\boldsymbol{\beta}}(\boldsymbol{\theta}) + \kappa g_{\alpha}(\phi))  } \\
  &\leq& \frac{| \mathrm{Im}(w_{\boldsymbol{\beta}}(\boldsymbol{\theta}) |+ \kappa | \mathrm{Im}(  g_{\alpha}(\phi)  )  |}{\mathrm{Re}(w_{\boldsymbol{\beta}}(\boldsymbol{\theta}))  + \kappa \mathrm{Re}( g_{\alpha}(\phi))  } \\
    &\leq&\left\{
    \begin{aligned}
    &\frac{| \mathrm{Im}(  g_{\alpha}(\phi)  )  |}{ \mathrm{Re}( g_{\alpha}(\phi))  },~~~~~\text{if}~G_J~\text{is~symmetric,}\\
    &\max\left\{\frac{| \mathrm{Im}(w_{\boldsymbol{\beta}}(\boldsymbol{\theta}) |}{\mathrm{Re}(w_{\boldsymbol{\beta}}(\boldsymbol{\theta}))  }, \frac{| \mathrm{Im}(  g_{\alpha}(\phi)  )  |}{ \mathrm{Re}( g_{\alpha}(\phi))  }\right\}, ~~~~~\text{if}~G_J~\text{is nonsymmetric,}
    \end{aligned}
    \right.\\
    &\leq&\left\{
    \begin{aligned}
    &\tan \left( \frac{\alpha}{2} \pi\right),~~~~~\text{if}~G_J~\text{is~symmetric,}\\
    &\max\left\{\mu_{\beta}, \tan \left( \frac{\alpha}{2} \pi\right)\right\}, ~~~~~\text{if}~G_J~\text{is nonsymmetric.}
    \end{aligned}
    \right.
    \end{eqnarray*}
    
\end{proof}

Combining Theorem \ref{conv_gmres_1} and Proposition \ref{prop:main}, we establish that $\hat P$, as defined in (\ref{Hermitian_pre}), is an ideal preconditioner. However, since it cannot be easily implemented in general, we now turn our attention to developing a practical preconditioner for matrix $A$ in \eqref{aaosystem}.

\subsection{Convergence analysis of the practical preconditioner $P$}\label{sec:modified_P}
Before defining the practical modified preconditioner $P$, we present some reasonable assumptions as follows.
\begin{assumption}\label{assumption_2}
For the multilevel ($d$-level) Toeplitz matrix $G_J$, there exists a fast diagonalizable SPD matrix $\tilde P$ such that the minimum eigenvalue of $\tilde P$ has a lower bound independent of matrix size and the spectrum of the preconditioned matrix $\tilde P^{-1} \mathcal{H}(G_J)$ is uniformly bounded, i.e., 
\begin{itemize}
  \item [\bf{(i)}] $\tilde P$ is SPD and $\inf \limits _{J>0} \lambda_{\min }(\tilde P) \geq \check{c}>0$.
  \item [\bf{(ii)}] $\tilde P$ is fast diagonalizable. In other words, for the orthogonal diagonalization of $\tilde P$, $\tilde P=\tilde S \Lambda_{\tilde P} \tilde S^{\top}$, both the orthogonal matrix $\tilde S$ and its transpose $\tilde S^{\top}$ have fast matrix-vector multiplications; the diagonal entries $\{\lambda_i\}_{i=1}^{J}$ of the diagonal matrix $\Lambda_{\tilde P}=\text{diag}(\lambda_i)_{i=1}^J$ are fast computable.
  \item [\bf{(iii)}] $\sigma({\tilde P}^{-1} \mathcal{H}(G_J)) \subset[\check{a}, \hat{a}]$ with $\check{a}$ and $\hat{a}$ being two positive constants independent of the matrix size parameter $J$.
\end{itemize}
\end{assumption}

The assumptions above can be easily satisfied. If the matrix $G_J$ arises from a class of low-order discretization schemes \cite{Meerschaert2006,SL1,CD2} of the multi-dimensional Riesz derivative $\mathcal{L}=\sum_{i=1}^d c_i \frac{\partial^{\alpha_i}}{\partial\left|x_i\right|^{\alpha_i}}$, then the well-known $\tau$ preconditioner \cite{Huang_2022} is a valid candidate of $\tilde P$, which is both fast diagonalizable (by the multi-dimensional sine transform matrix) and SPD with its minimum eigenvalue bounded below by a constant independent of the matrix size (referring to \cite{lin2024single}); moreover, the spectra of the preconditioned matrices lie in the open interval $(1/2,3/2)$ \cite{Huang_2022,zhang2023fast}. If $G_J$ arises from the high-order discretization \cite{ding2023high} of the Riesz derivative, then the $\tau$-matrix based preconditioner proposed in \cite{qu2024novel} is a viable option of $\tilde P$ with fast matrix-vector multiplications and $\sigma({\tilde P}^{-1} G_J) \subset (3/8,2)$ \cite{qu2024novel} and the lower bound of minimum eigenvalue of $\tilde P$ also has been proved in \cite{huang2024efficient}. If the matrix $G_J$ arises from the central difference discretization of the constant Laplacian operator $\mathcal{L}=\Delta$, then the discretization matrix $L_1$ of $-\Delta$ is a suitable choice of $\tilde P$ since $L_1$ itself is a $\tau$-matrix and meets Assumption \ref{assumption_2}$~{\bf(i)}$-${\bf (iii)}$, see for example \cite{lin2021parallel,lin2024matching}. If $G_J$ arises from the shifted Gr{\"u}nwald-formula \cite{meerschaert2004finite} or the WSGD formulas \cite{tian2015class,hao2015fourth} for $\sum_{i=1}^d \left(k_{i,+} \frac{\partial^{\beta_i}}{\partial_{+} x_i^{\beta_i}}+k_{i,-}\frac{\partial^{\beta_i}}{\partial_{-} x_i^{\beta_i}}\right)$, the $\tau$-preconditioner based on $\mathcal{H}(G_J)$ also satisfies all the assumptions above, see for example \cite{Huang_2022,lin2023tau,huang2022preconditioners,tang2024fast}. 

With a preconditioner $\tilde P$ that satisfies Assumption \ref{assumption_2}, a practical novel preconditioner $P$ for $A$ in (\ref{aaosystem}) can be defined as
\begin{equation}\label{practical_pre}
P = \tilde P \otimes I_N + I_J \otimes \rho \tau(\mathcal{H}(B^{(\alpha)}_N)).
\end{equation}
From \eqref{taumatdiag} \& \eqref{sigmicomp} and properties of the one-dimensional sine transform matrix $S_m$, we know that $P$ can be diagonalized in the following form
\begin{align*}
        P=S \Lambda S,\label{btaudiag} \quad S:=\tilde S\otimes S_{N}^{\top},\quad \Lambda:= \Lambda_{\tilde P}\otimes I_N + I_J \otimes \rho \Upsilon,
\end{align*}
where $\Upsilon$ contains the eigenvalues of $\tau(\mathcal{H}(B^{(\alpha)}_N))$, which, in combination with Assumption \ref{assumption_2}~{\bf (ii)}, illustrates that the product of $P^{-1}$ and a given vector can be efficiently computed.

Since $\tilde P$ is SPD by assumption, to prove the positive definiteness of $P$, it suffices to show that $\tau(\mathcal{H}(B^{(\alpha)}_N))$ is SPD. Before doing so, we first present some properties of of $a_k$ related to the $L1$ formula for the Caputo derivative.

\begin{lemma} \cite{zhang2011alternating,chen2009finite}\label{prop_L1_coef}
Let $\alpha \in (0,1)$, $a_k=(k+1)^{1-\alpha}-k^{1-\alpha}$, $k=0,1,\ldots$. Then,
\begin{itemize}
  \item [\bf{(i)}] $1=a_0>a_1>\cdots>a_N>\cdots \rightarrow 0$; 
  \item [\bf{(ii)}] $a_0+\sum_{k=1}^{N-1}\left(a_{k}-a_{k-1}\right)=a_{N-1}$.
\end{itemize}
\end{lemma}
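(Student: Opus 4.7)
The plan is to establish the two assertions about the L1 weights $a_k=(k+1)^{1-\alpha}-k^{1-\alpha}$ by exploiting the concavity of the map $f(x)=x^{1-\alpha}$ on $[0,\infty)$. The first assertion is the analytic content; the second is an immediate telescoping identity.

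For part (i), I would first dispatch the base case $a_0=1^{1-\alpha}-0^{1-\alpha}=1$ by direct substitution, together with the observation that $a_k>0$ for every $k\ge 0$ because $f$ is strictly increasing on $[0,\infty)$. The strict monotonicity $a_k>a_{k+1}$ is where the real content lies, and I would handle it by the mean value theorem applied on consecutive unit intervals: for each $k\ge 0$ there exists $\xi_k\in(k,k+1)$ with
\[
a_k=f(k+1)-f(k)=f'(\xi_k)=(1-\alpha)\xi_k^{-\alpha}.
\]
Since $\alpha\in(0,1)$, the derivative $f'(x)=(1-\alpha)x^{-\alpha}$ is strictly decreasing on $(0,\infty)$, and because $\xi_k<k+1<\xi_{k+1}$, it follows that $a_k=f'(\xi_k)>f'(\xi_{k+1})=a_{k+1}$. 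The limit $a_k\to 0$ then drops out of the same representation, as $\xi_k>k$ gives $a_k<(1-\alpha)k^{-\alpha}\to 0$.

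For part (ii), I would simply recognise the left-hand side as a telescoping sum:
\begin{equation*}
a_0+\sum_{k=1}^{N-1}(a_k-a_{k-1})=a_0+(a_{N-1}-a_0)=a_{N-1}.
\end{equation*}

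There is no real obstacle here; the only care needed is to avoid an unnecessary appeal to convexity estimates and to phrase the monotonicity argument in a way that yields both the strict inequality and the vanishing limit from a single representation. Since the statement is standard and already cited from \cite{zhang2011alternating,chen2009finite}, the proof proposal serves mainly to record the one-line mean value argument for (i) and the telescoping identity for (ii).
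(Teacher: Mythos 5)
Your proof is correct. The paper itself does not give a proof of this lemma, deferring instead to the cited references \cite{zhang2011alternating,chen2009finite}, so there is no in-paper argument to compare against; your mean-value-theorem treatment of the monotonicity and decay of $a_k$ together with the telescoping identity for (ii) is the standard argument and is sound as written.
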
 

\begin{lemma}\cite{lin2018separable}\label{positive_Hermitian_L1}
For any $\alpha \in (0,1)$, it holds that $\mathcal{H}(B^{(\alpha)}_N)$ is SPD. 
\end{lemma}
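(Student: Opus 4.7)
The plan is to establish positive definiteness of $\mathcal{H}(B^{(\alpha)}_N)$ by verifying strict diagonal dominance together with positivity of the diagonal, which suffices because $\mathcal{H}(B^{(\alpha)}_N)$ is by construction real symmetric. This is a finite-dimensional, purely coefficient-based statement, so the entire argument should rest only on the elementary properties of $a_k = (k+1)^{1-\alpha} - k^{1-\alpha}$ supplied by Lemma \ref{prop_L1_coef}.

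First I would write out the entries explicitly. Since $B^{(\alpha)}_N$ is lower triangular Toeplitz with first column $(a_0, a_1-a_0, a_2-a_1, \ldots, a_{N-1}-a_{N-2})^{\top}$ and $a_0 = 1$, the symmetric part $\mathcal{H}(B^{(\alpha)}_N)$ is a symmetric Toeplitz matrix whose $(i,j)$-entry equals $a_0 = 1$ when $i=j$ and $(a_{|i-j|}-a_{|i-j|-1})/2$ when $i\neq j$. By Lemma \ref{prop_L1_coef}(i), the sequence $\{a_k\}_{k\geq 0}$ is strictly decreasing and strictly positive, so every off-diagonal entry is strictly negative while the diagonal entry equals $1>0$.

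Next I would bound the row sums of absolute values of off-diagonal entries. For the $i$-th row ($1\leq i\leq N$), using $|a_k-a_{k-1}|=a_{k-1}-a_k$ and telescoping,
\begin{equation*}
\sum_{j\neq i}\bigl|[\mathcal{H}(B^{(\alpha)}_N)]_{ij}\bigr|
=\tfrac{1}{2}\sum_{k=1}^{i-1}(a_{k-1}-a_k)+\tfrac{1}{2}\sum_{k=1}^{N-i}(a_{k-1}-a_k)
=1-\tfrac{a_{i-1}+a_{N-i}}{2}.
\end{equation*}
Since $a_{i-1}>0$ and $a_{N-i}>0$ by Lemma \ref{prop_L1_coef}(i), this is strictly less than the diagonal entry $a_0=1$, so $\mathcal{H}(B^{(\alpha)}_N)$ is strictly diagonally dominant with positive diagonal.

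Finally, I would invoke the standard fact that a real symmetric, strictly diagonally dominant matrix with positive diagonal entries is positive definite (immediate from Gershgorin: every eigenvalue lies in a disc centered at a positive number with radius strictly smaller than that number, hence is strictly positive). Combined with symmetry this yields SPD. The only step that requires any care is the telescoping computation, and even that is routine thanks to the monotonicity in Lemma \ref{prop_L1_coef}(i); there is no genuine obstacle, which is consistent with the fact that the result is quoted directly from \cite{lin2018separable}.
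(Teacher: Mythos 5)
Your proof is correct and matches the approach implied by the paper. The paper itself gives no proof for this lemma, attributing it directly to \cite{lin2018separable}, but in Section~4 it explicitly remarks that ``$\mathcal{H}(B^{(\alpha)}_N)$ is strictly diagonally dominant and hence SPD'', which is precisely the strategy you carry out: identify the off-diagonal entries as $\tfrac{1}{2}(a_{|i-j|}-a_{|i-j|-1})$, telescope the row sums using Lemma~\ref{prop_L1_coef}(i) to get $1-\tfrac{1}{2}(a_{i-1}+a_{N-i})<a_0$, and invoke Gershgorin for a real symmetric matrix. The computation is the same style as the paper's own Lemma~\ref{M_matrix} for the $\tau$-transformed matrix, so there is nothing to add.
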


The following lemma guarantees the positive definiteness of $P$.

\begin{lemma}\label{lemma:PS_SPD}
The matrix $P$ defined in \eqref{assumption_1} is SPD and $\inf\limits_{J>0} \lambda_{\min }\left(P\right) \geq \check{c}>0$ with $\check{c}>0$ defined in Assumption \ref{assumption_2}$~{\bf(i)}$.
\end{lemma}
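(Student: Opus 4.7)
The plan is to exploit the Kronecker-sum structure of $P$ and reduce the problem to showing that each of the two summands is SPD with a quantitative lower bound on its smallest eigenvalue. Since $\tilde P$ and $\tau(\mathcal{H}(B^{(\alpha)}_N))$ are symmetric, the eigenvalues of $P$ are exactly the pairwise sums $\lambda_i(\tilde P) + \rho\,\lambda_j(\tau(\mathcal{H}(B^{(\alpha)}_N)))$, so establishing positivity term by term gives both conclusions of the lemma immediately.

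For the first summand, Assumption \ref{assumption_2}(i) already delivers $\tilde P \succ \mathcal{O}$ with $\lambda_{\min}(\tilde P) \geq \check c > 0$ uniformly in $J$. The real content of the proof is the second summand. I will use the sine-diagonalization \eqref{taumatdiag}–\eqref{sigmicomp}: $\mathcal{H}(B^{(\alpha)}_N)$ is symmetric Toeplitz with first row $(a_0,(a_1-a_0)/2,(a_2-a_1)/2,\ldots,(a_{N-1}-a_{N-2})/2)$, so the $i$-th eigenvalue of $\tau(\mathcal{H}(B^{(\alpha)}_N))$ is
\[
q_i \;=\; 1 + \sum_{k=1}^{N-1}(a_k-a_{k-1})\cos\!\left(\frac{\pi i k}{N+1}\right), \qquad i\in 1\wedge N.
\]

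The key algebraic trick is to compare this to the telescoping identity $a_{N-1}=a_0+\sum_{k=1}^{N-1}(a_k-a_{k-1})$ from Lemma \ref{prop_L1_coef}(ii). Subtracting gives
\[
q_i - a_{N-1} \;=\; \sum_{k=1}^{N-1}(a_k-a_{k-1})\bigl[\cos\!\tfrac{\pi i k}{N+1}-1\bigr].
\]
By Lemma \ref{prop_L1_coef}(i) each factor $a_k-a_{k-1}$ is strictly negative, while $\cos(\cdot)-1 \leq 0$, so every term on the right is nonnegative. Hence $q_i \geq a_{N-1} > 0$ for all $i$, which shows $\tau(\mathcal{H}(B^{(\alpha)}_N)) \succ \mathcal{O}$.

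Finally, the Kronecker-sum eigenvalue identity yields
\[
\lambda_{\min}(P) \;=\; \lambda_{\min}(\tilde P) + \rho\,\lambda_{\min}(\tau(\mathcal{H}(B^{(\alpha)}_N))) \;\geq\; \check c + \rho\, a_{N-1} \;\geq\; \check c > 0,
\]
so $P$ is SPD and $\inf_{J>0}\lambda_{\min}(P) \geq \check c$. The only delicate step is establishing the sign of the $q_i$; once the telescoping trick with Lemma \ref{prop_L1_coef} is in hand, the rest is bookkeeping.
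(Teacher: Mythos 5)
Your proof is correct and follows essentially the same route as the paper: both reduce the claim to the positive definiteness of $\tau(\mathcal{H}(B^{(\alpha)}_N))$, both compute its eigenvalues via the sine-transform formula \eqref{sigmicomp}, both exploit the sign of $a_k - a_{k-1}$ and the bound $\cos(\cdot)\le 1$ to arrive at $q_i \ge a_{N-1} > 0$ using the telescoping identity of Lemma~\ref{prop_L1_coef}(ii), and both then invoke the Kronecker-sum eigenvalue structure to conclude $\lambda_{\min}(P) \ge \lambda_{\min}(\tilde P) \ge \check c$. The only difference is cosmetic: you subtract $a_{N-1}$ first and read off nonnegativity term by term, whereas the paper bounds the cosine sum directly and then telescopes.
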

\begin{proof}
Since $\tilde P$ is SPD from Assumption \ref{assumption_2}$~{\bf(i)}$ and the symmetry of $\tau(\mathcal{H}(B^{(\alpha)}_N))$ is straightforward, it suffices to prove the positive definiteness of $\tau(\mathcal{H}(B^{(\alpha)}_N))$. From (\ref{sigmicomp}), (\ref{eqn:matrix_time_fractional}) and Lemma \ref{prop_L1_coef}, we know that
\begin{eqnarray}\nonumber
\lambda_k(\tau(\mathcal{H}(B^{(\alpha)}_N)))&=&l_{0}^{(\alpha)}+ \sum_{j=1}^{N-1} l_{j}^{(\alpha)} \cos \left(\frac{jk\pi}{N+1}\right)\\\nonumber
&\geq&l_{0}^{(\alpha)}+ \sum_{j=1}^{N-1} l_{j}^{(\alpha)}\\\nonumber
&=&a_0+\sum_{k=1}^{N-1}\left(a_{k}-a_{k-1}\right) \\\nonumber
&=&a_{N-1},
\end{eqnarray}
which means $\tau(\mathcal{H}(B^{(\alpha)}_N))$ is SPD.

Thus, knowing that
\begin{equation*}
P = \tilde P \otimes I_N + I_J \otimes \rho \tau(\mathcal{H}(B^{(\alpha)}_N)), 
\end{equation*}
we conclude $P$ is SPD and
\begin{equation*}
\inf\limits_{J>0} \lambda_{\min }\left(P\right) \geq \inf\limits_{J>0} \lambda_{\min }\left(\tilde P\right) \geq \check{c}>0.
\end{equation*}
\end{proof}

\begin{lemma} \label{M_matrix}
The matrix $\tau(\mathcal{H}(B^{(\alpha)}_N))$ is strictly diagonally dominant with positive diagonal entries and negative off-diagonal entries.
\end{lemma}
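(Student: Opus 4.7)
The plan is to unpack the Toeplitz-minus-Hankel definition of $\tau(\mathcal{H}(B^{(\alpha)}_N))$ given by \eqref{tauopdef}, verify the sign pattern entry by entry, and then upgrade to strict diagonal dominance via a direct row-sum computation. Let $M:=\tau(\mathcal{H}(B^{(\alpha)}_N))$ and write $\mathcal{H}(B^{(\alpha)}_N)$ as the symmetric Toeplitz matrix with first row $(t_1,\ldots,t_N)$, where $t_1=a_0=1$ and $t_k=(a_{k-1}-a_{k-2})/2$ for $k\ge 2$. By Lemma \ref{prop_L1_coef}(i), $t_k<0$ for $k\ge 2$. A key auxiliary property I would record up front is that $\{t_k\}_{k\ge 2}$ is \emph{strictly increasing}, equivalently $\{a_k-a_{k-1}\}$ is strictly increasing. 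This is the second difference of $f(x)=x^{1-\alpha}$, whose third derivative $\alpha(1-\alpha)(1+\alpha)x^{-2-\alpha}$ is positive; writing $a_k-a_{k-1}=\int_0^1\!\!\int_0^1 f''(k-1+s+t)\,ds\,dt$ makes the monotonicity immediate.

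Using \eqref{tauopdef} I would next describe the Hankel correction $H$ by $H_{i,j}=h(i+j)$, with $h(k)=t_{k+1}$ for $2\le k\le N-1$, $h(k)=0$ for $N\le k\le N+2$, and $h(k)=t_{2N+3-k}$ for $N+3\le k\le 2N$. The diagonal is then $M_{ii}=t_1-h(2i)\ge t_1=1>0$, since $h(2i)$ is either $0$ or equals some $t_k$ with $k\ge 2$ and hence nonpositive. For off-diagonal entries I would split by the value of $i+j$: if $i+j\in\{N,N+1,N+2\}$, then $M_{ij}=t_{|i-j|+1}<0$ outright; if $i+j\le N-1$, the index inequality $|i-j|+1<i+j+1$ combined with the monotonicity of $\{t_k\}_{k\ge 2}$ yields $M_{ij}=t_{|i-j|+1}-t_{i+j+1}<0$; if $i+j\ge N+3$, the constraint $\max(i,j)\le N$ gives $|i-j|+1<2N+3-(i+j)$, and the same monotonicity closes the case.

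With the sign pattern in hand, strict diagonal dominance is equivalent to the positivity of each row sum $r_i:=\sum_{j=1}^N M_{ij}$. Telescoping the Toeplitz contribution yields $\sum_j (\mathcal{H}(B^{(\alpha)}_N))_{ij}=(a_{i-1}+a_{N-i})/2$, and a case-by-case telescoping of $\sum_j h(i+j)$ produces a closed-form expression involving only $a_1,a_2,a_{N-1},a_i,a_{N-i},a_{N+1-i}$. In the bulk range $3\le i\le N-2$, the strict monotonicity $a_k>a_{N-1}$ for $k\le N-2$ (Lemma \ref{prop_L1_coef}(i)) makes $r_i=\tfrac12\bigl(a_{i-1}+a_i+a_{N-i}+a_{N+1-i}-2a_{N-1}\bigr)$ a sum of four terms each strictly exceeding $a_{N-1}$; for the boundary rows $i\in\{1,2,N-1,N\}$ a short direct computation gives $r_i\ge(1+a_1)/2>0$.

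The argument is conceptually routine but requires careful bookkeeping across the three regions of the Hankel block and the four boundary rows where the row-sum formulas degenerate. The only non-trivial ingredient is the strict monotonicity of $a_k-a_{k-1}$, which I would state and justify as a short preliminary observation so that both the off-diagonal sign step and the row-sum comparison can invoke it cleanly.
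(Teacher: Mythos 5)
Your proposal is correct, and it improves on the paper's argument in a meaningful way. The paper asserts that the off-diagonal sign pattern $p_{ij}\leq 0$ is "easy to check" from the monotonicity $a_0>a_1>\cdots$ alone, but that is not actually sufficient: both the Toeplitz entry $t_{|i-j|+1}$ and the Hankel correction $H_{ij}$ are nonpositive, and to conclude $t_{|i-j|+1}-H_{ij}\leq 0$ one needs the comparison $t_{|i-j|+1}\leq t_{i+j+1}$ (resp.\ $t_{2N+3-(i+j)}$), which requires $\{t_k\}_{k\geq 2}$ to be increasing, i.e.\ the \emph{second-order} property that $a_k-a_{k-1}$ increases. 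You are right to isolate this as the genuine ingredient; your integral-of-$f''$ argument (or equivalently, positivity of the third forward difference of $x^{1-\alpha}$) proves it cleanly, whereas the paper's Lemma~\ref{prop_L1_coef} only supplies first-order monotonicity and hence does not, as stated, justify the claimed sign pattern. For strict diagonal dominance, the paper bounds $\sum_{j\neq i}|p_{ij}|$ above by the total Toeplitz plus total Hankel absolute row mass and compares with $p_{ii}\geq a_0$; you instead compute the exact row sum $r_i$ by telescoping both pieces, arriving at $r_i=\tfrac12(a_{i-1}+a_i+a_{N-i}+a_{N+1-i})-a_{N-1}>a_{N-1}$ for interior $i$. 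Both routes establish dominance; yours is sharper and dovetails nicely with the eigenvalue bound $\lambda_k(\tau(\mathcal{H}(B^{(\alpha)}_N)))\geq a_{N-1}$ already used in Lemma~\ref{lemma:PS_SPD}. One small caveat: the stated uniform lower bound $r_i\geq(1+a_1)/2$ for the boundary rows only holds at $i\in\{1,N\}$; for $i\in\{2,N-1\}$ a direct computation gives $r_2=\tfrac12(a_1+a_2+a_{N-2}-a_{N-1})$, which is positive but generally smaller than $(1+a_1)/2$. That slip does not affect the conclusion, as positivity is immediate from $a_1,a_2>0$ and $a_{N-2}>a_{N-1}$.
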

\begin{proof}
Denote $p_{i,j}$ be the $(i,j)$-th entry of $\tau(\mathcal{H}(B^{(\alpha)}_N))$. Based on the monotonicity of $\{a_k\}_{k=0}^{N-1}$ from Lemma \ref{prop_L1_coef} and the definition of $\tau$-matrix, it is easy to check that $p_{ij}\leq 0$ for $i\neq j$.
In addition, the diagonal entries of $\tau(\mathcal{H}(B^{(\alpha)}_N))$ contain
    $\{l_0^{(\alpha)}-\frac{1}{2}l_2^{(\alpha)}, l_0^{(\alpha)}-\frac{1}{2}l_4^{(\alpha)}, \ldots, l_0^{(\alpha)}-\frac{1}{2}l_{N-2}^{(\alpha)}, l_0^{(\alpha)}\}$.
    By Lemma \ref{prop_L1_coef} again, we know that for $k\geq1$, $l_k^{(\alpha)}=a_k-a_{k-1}<0$ and $l_0^{(\alpha)}-\frac{1}{2}l_k^{(\alpha)}>l_0^{(\alpha)}$, which means $p_{ii}\geq l_0^{(\alpha)}> 0$ for each $i=1,2,\cdots,N$. On the other hand,

\begin{equation*}
\begin{aligned}
\sum_{j=1, j \neq i}^N\left|p_{i j}\right| &\leq \sum_{j=1}^{i-1}\left|\frac{1}{2}l_j^{(\alpha)}\right|+\sum_{j=1}^{N-i}\left|\frac{1}{2}l_j^{(\alpha)}\right|+\left(\sum_{j=i+1}^{N-1}\left|\frac{1}{2}l_j^{(\alpha)}\right|+\sum_{j=N-i+2}^{N-1}\left|\frac{1}{2}l_j^{(\alpha)}\right|\right) \\
&<\sum_{j=1}^{N-1}\left|\frac{1}{2}l_j^{(\alpha)}\right|+\sum_{j=1}^{N-1}\left|\frac{1}{2}l_j^{(\alpha)}\right| \\
&=\sum_{j=1}^{N-1}\left|l_j^{(\alpha)}\right| \\
&=-(l_1^{(\alpha)}+l_2^{(\alpha)}+\cdots+l_{N-1}^{(\alpha)})\\
&=a_0-a_{N-1}\\
&< l_0^{(\alpha)}\\
&\leq p_{ii},
\end{aligned}
\end{equation*}
which means  $\tau(\mathcal{H}(B^{(\alpha)}_m))$ is diagonally dominant.
\end{proof}

\begin{lemma}\label{temporal_bound}
The eigenvalues of $\tau(\mathcal{H}(B^{(\alpha)}_N))^{-1}  \mathcal{H}(B^{(\alpha)}_N)  $ lie in $(1/2, 3/2)$ for $\alpha \in (0, 1)$ and $N>0$.
\end{lemma}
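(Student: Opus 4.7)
Set $T := \mathcal{H}(B^{(\alpha)}_N)$, $\tau := \tau(T)$, and $H := T - \tau$, where $H$ is the symmetric Hankel correction arising from \eqref{tauopdef}. By Lemma \ref{lemma:PS_SPD} the matrix $\tau$ is SPD, so $\tau^{-1/2}$ exists and the eigenvalues of $\tau^{-1} T$ are real, coinciding with those of the symmetric matrix $\tau^{-1/2} T \tau^{-1/2}$. Substituting $\tau = T - H$ yields the equivalence
\[
\sigma\bigl(\tau^{-1} T\bigr) \subset \bigl(\tfrac{1}{2}, \tfrac{3}{2}\bigr) \iff 2T - \tau \succ 0 \text{ and } 3\tau - 2T \succ 0 \iff T + H \succ 0 \text{ and } T - 3H \succ 0.
\]
The strategy is to establish these two matrix inequalities.

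For $T + H \succ 0$, I would proceed by strict diagonal dominance. Since $t_j = (a_{j-1} - a_{j-2})/2 < 0$ for $j \geq 2$ by Lemma \ref{prop_L1_coef}(i), all off-diagonal entries of $T$ are negative, and by the construction in \eqref{tauopdef} the nonzero entries of $H$ inherit the same non-positive sign. Hence $T + H$ has non-positive off-diagonals, so strict diagonal dominance reduces to showing $(T + H)\mathbf{1}_i > 0$ for every $i$. The row sum $(T\mathbf{1})_i = (a_{i-1} + a_{N-i})/2$ follows by telescoping via Lemma \ref{prop_L1_coef}(ii), while a split-case computation of $(H\mathbf{1})_i$ based on the regions where $H[i+j]$ equals $t_{i+j+1}$, $0$, or $t_{2N+3-i-j}$ yields a closed form whose combination with $(T\mathbf{1})_i$ is manifestly positive. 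Gershgorin's theorem then gives $T + H \succ 0$.

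For $T - 3H \succ 0$, the picture is subtler because the off-diagonal entries $(T - 3H)_{ij} = -|T_{ij}| + 3|H_{ij}|$ may carry mixed signs whenever $3|H_{ij}|$ exceeds $|T_{ij}|$, so a plain row-sum argument is inadequate. The diagonal is however enhanced to $1 + 3|H[2i]|$, and the plan is to establish strict diagonal dominance in the absolute-value sense, $(T - 3H)_{ii} > \sum_{j \neq i} \bigl||T_{ij}| - 3|H_{ij}|\bigr|$. The essential inputs are the concavity of $x \mapsto x^{1-\alpha}$ on $(0,\infty)$, which yields strict monotone decay of $a_{k-1} - a_k$ in $k$, together with Lemma \ref{prop_L1_coef}(ii) and the boundary vanishing $H[k] = 0$ for $k \in \{N, N+1, N+2\}$. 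The main obstacle lies precisely in this second inequality: because the sign structure is not uniform, one must track each term $||T_{ij}| - 3|H_{ij}||$ case by case, with the diagonal enhancement $3|H[2i]|$ delicately offsetting the additional contributions $3|H_{ij}|$ uniformly in $\alpha \in (0,1)$ and $N$. Once both inequalities are verified, the desired spectral bound on $\tau^{-1} T$ follows.
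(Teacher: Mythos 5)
Your reformulation of the spectral bound as the pair of Loewner inequalities $T + H \succ 0$ and $T - 3H \succ 0$ (with $T := \mathcal{H}(B^{(\alpha)}_N)$, $\tau = T - H$) is correct, and your diagonal-dominance plan for $T + H$ is sound: the off-diagonals of $T + H$ are indeed nonpositive, so Gershgorin applies once the row sums are shown positive. In fact, that positivity is precisely the inequality the paper establishes, since $|p_{kk}| - \sum_{j\ne k}|p_{kj}| - 2\sum_j|h_{kj}|$, after unwinding signs, is nothing but the $k$-th row sum of $\tau + 2H = T + H$.

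The gap is in the second half. You acknowledge that $T - 3H \succ 0$ cannot be handled by a plain row-sum argument because the off-diagonals $-|T_{ij}| + 3|H_{ij}|$ carry mixed signs, and you only sketch a case analysis ``delicately offsetting'' terms without carrying it out. That is a genuine hole: the required absolute-value diagonal dominance $(T-3H)_{ii} > \sum_{j\ne i}\bigl||T_{ij}| - 3|H_{ij}|\bigr|$ is not an easy consequence of the monotonicity of $a_k$, and it is not clear it even holds entrywise for $\alpha$ near $1$, where $|t_k|$ decays slowly and $3|h_{i+j}|$ can exceed $|t_{|i-j|+1}|$ near the diagonal. The paper sidesteps this entirely: rather than splitting into two Loewner inequalities, it normalizes an eigenvector of $\tau^{-1}H$ at its largest component and applies the triangle inequality, obtaining $|\lambda| \leq \sum_j|h_{kj}| / (|p_{kk}| - \sum_{j\ne k}|p_{kj}|) < 1/2$ from the single row-sum estimate. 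Because the triangle inequality is symmetric in sign, this bounds the \emph{spectral radius} of $\tau^{-1}H$, which automatically delivers both $-\tfrac{1}{2}\tau \prec H$ and $H \prec \tfrac{1}{2}\tau$, i.e., both of your target inequalities, with no separate argument for $T - 3H$. So your decomposition is a valid reformulation, but the route through two one-sided diagonal-dominance claims is strictly harder than the eigenvector argument, and the $T-3H$ half as written is unproven.
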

\begin{proof}
Considering the following matrix decomposition \[\tau(\mathcal{H}(B^{(\alpha)}_N))^{-1}  \mathcal{H}(B^{(\alpha)}_N) = \mathcal{I} + \tau(\mathcal{H}(B^{(\alpha)}_N))^{-1} H(\mathcal{H}(B^{(\alpha)}_N)),\] it suffices to show the spectral distribution of $\tau(\mathcal{H}(B^{(\alpha)}_N))^{-1}  H(\mathcal{H}(B^{(\alpha)}_N))$. 

Let $h_{i,j}$ be the $(i,j)$-th entry of $H(\mathcal{H}(B^{(\alpha)}_N))$ and $(\lambda,\tilde x)$ with $\tilde x=\left[\tilde x_1, \tilde x_2, \ldots, \tilde x_{N}\right]^{\top}$ and $\max\limits_{1 \leq j \leq {N}}\left|\tilde x_j\right|=\left|\tilde x_k\right|=1$ be an eigenpair of $\tau(\mathcal{H}(B^{(\alpha)}_N))^{-1}  H(\mathcal{H}(B^{(\alpha)}_N))$. Then, we have
\begin{equation*}
\sum_{j=1}^{N} h_{k j} \tilde x_j=\lambda \sum_{j=1}^{N} p_{k j} \tilde x_j,
\end{equation*}
which is equivalent to
\begin{equation*}
\lambda p_{k k} \tilde x_k=\sum_{j=1}^{N} h_{k j} \tilde x_j-\lambda \sum_{j=1, j \neq k}^{N} p_{i j} \tilde x_j.
\end{equation*}
By taking the absolute value on both sides of the above equation, the following inequality holds
\begin{equation*}
|\lambda|\left|p_{k k}\right| \leq \sum_{j=1}^{N}\left|h_{k j}\right|+|\lambda| \sum_{j=1, j \neq k}^{N}\left|p_{k j}\right|.
\end{equation*}
Since $\tau(\mathcal{H}(B^{(\alpha)}_N))$ is diagonally dominant from Lemma \ref{M_matrix}, the following inequality holds
\begin{equation*}
|\lambda| \leq \frac{\sum\limits_{j=1}^{N}\left|h_{k j}\right|}{\left|p_{k k}\right|-\sum\limits_{j=1, j \neq k}^{N}\left|p_{k j}\right|}.
\end{equation*}
By noting that $p_{ii}> 0$; $p_{ij}\leq 0$ for $i\neq j$ and $h_{i j}\leq 0$, it follows that 
\begin{equation*}
\begin{aligned}
&\left|p_{k k}\right|-\sum_{j=1, j \neq k}^{N}\left|p_{k j}\right|-2 \sum_{j=1}^{N}\left|h_{k j}\right| \\
&=\left(l_0^{(\alpha)}-h_{k k}\right)-\sum_{j=1, j \neq k}^{N}\left(h_{k j}-\frac{1}{2}l_{|k-j|}^{(\alpha)}\right)+2 \sum_{j=1}^{N} h_{k j}\\ 
&= l_0^{(\alpha)}+\sum_{j=1, j \neq k}^{N} \frac{1}{2}l_{|k-j|}^{(\gamma)}+\sum_{j=1}^{N} h_{k j} \\
&= l_0^{(\alpha)}+\frac{1}{2}\left(\sum_{j=1}^{k-1} l_j^{(\alpha)}+\sum_{j=1}^{N-k} l_j^{(\alpha)}\right)+\frac{1}{2}\left(\sum_{j=k+1}^{N-1} l_j^{(\alpha)}+\sum_{j=N-k+2}^{N-1} l_j^{(\alpha)}\right) \\
&\geq  l_0^{(\alpha)}+\sum_{j=1}^{N-1} l_j^{(\alpha)} \\
&=a_{N-1}\\
&>0,
\end{aligned}
\end{equation*}
which indicates that the spectrum of $\tau(\mathcal{H}(B^{(\alpha)}_N))^{-1}  H(\mathcal{H}(B^{(\alpha)}_N))$ is uniformly bounded by $-1/2$ and $1/2$. The proof is complete.
\end{proof}

The following lemma shows that $P$ and $\mathcal{H}( A)$ are spectrally equivalent.

\begin{lemma}\label{prop:eigen_S}
Let $A, P$ be the matrices defined in \eqref{aaosystem} and \eqref{practical_pre}, respectively. Then, the eigenvalues of $P^{-1/2} \mathcal{H}(A) P^{-1/2}$ lie in $(\check{b}, \hat{b})$ with $\check{b}=\min{\left\{\check{a},\frac{1}{2}\right\} }$ and $ \hat{b}=\max{\left\{ \hat{a},\frac{3}{2}\right\} }$.
\end{lemma}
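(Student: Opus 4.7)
The plan is to reduce the bound on $\sigma(P^{-1/2}\mathcal{H}(A)P^{-1/2})$ to two separate spectral equivalence statements: one for the spatial part ($\tilde P$ versus $\mathcal{H}(G_J)$) coming from Assumption \ref{assumption_2}\textbf{(iii)}, and one for the temporal part ($\tau(\mathcal{H}(B^{(\alpha)}_N))$ versus $\mathcal{H}(B^{(\alpha)}_N)$) coming from Lemma \ref{temporal_bound}. The key observation is that both $\mathcal{H}(A)$ and $P$ are Kronecker sums with matching block structure, so tensoring each inequality with the appropriate identity and then adding will give a two-sided bound $\check{b}P\preceq \mathcal{H}(A)\preceq \hat{b}P$ in the Loewner order, from which the eigenvalue statement follows by congruence.

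Concretely, I would first translate the two spectral statements into Loewner inequalities on SPD matrices. Assumption \ref{assumption_2}\textbf{(iii)} yields $\check{a}\tilde P \preceq \mathcal{H}(G_J) \preceq \hat{a}\tilde P$, and Lemma \ref{temporal_bound} gives $\tfrac{1}{2}\tau(\mathcal{H}(B^{(\alpha)}_N))\preceq \mathcal{H}(B^{(\alpha)}_N)\preceq \tfrac{3}{2}\tau(\mathcal{H}(B^{(\alpha)}_N))$. Taking Kronecker products against $I_N$ and $I_J$ (both SPD) preserves the Loewner order, so
\[
\check{a}\bigl(\tilde P\otimes I_N\bigr)\preceq \mathcal{H}(G_J)\otimes I_N \preceq \hat{a}\bigl(\tilde P\otimes I_N\bigr),
\]
\[
\tfrac{1}{2}\,\kappa\bigl(I_J\otimes \tau(\mathcal{H}(B^{(\alpha)}_N))\bigr)\preceq \kappa\bigl(I_J\otimes \mathcal{H}(B^{(\alpha)}_N)\bigr)\preceq \tfrac{3}{2}\,\kappa\bigl(I_J\otimes \tau(\mathcal{H}(B^{(\alpha)}_N))\bigr).
\]
Adding the lower bounds, the smaller of the two scalars $\min\{\check{a},\tfrac{1}{2}\}=\check{b}$ can be factored out against $P$; similarly the larger scalar $\max\{\hat{a},\tfrac{3}{2}\}=\hat{b}$ bounds $\mathcal{H}(A)$ from above. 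This gives $\check{b}\,P\preceq \mathcal{H}(A)\preceq \hat{b}\,P$.

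Since $P$ is SPD by Lemma \ref{lemma:PS_SPD}, the inverse square root $P^{-1/2}$ exists. Conjugating the double inequality by $P^{-1/2}$ preserves the Loewner order and produces
\[
\check{b}\,I \preceq P^{-1/2}\mathcal{H}(A)P^{-1/2} \preceq \hat{b}\,I,
\]
which is exactly the claim on the spectrum. The only subtlety I anticipate is bookkeeping to confirm that the arithmetic $\min/\max$ against the Kronecker sum really yields $\check{b}P$ and $\hat{b}P$ (as opposed to merely a constant times a convex combination), but this follows because both summands of $P$ are positive semidefinite and bounded below and above by the same constants that multiply the corresponding summands on the right-hand side. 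If the paper insists on the open interval $(\check{b},\hat{b})$, I would sharpen the argument by noting that Lemma \ref{temporal_bound} already gives strict open bounds on the temporal part; combined with the closed spatial bounds, at least the endpoints are not attained whenever $\min$/$\max$ is $\tfrac{1}{2}$ or $\tfrac{3}{2}$, and in the other cases strictness follows from Assumption \ref{assumption_2}\textbf{(iii)} holding as a set inclusion on finite spectra.
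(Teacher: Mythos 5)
Your proof is correct and takes essentially the same approach as the paper's, just phrased in a dual language. The paper picks an eigenpair $(\lambda,\mathbf{w})$ of $P^{-1}\mathcal{H}(A)$, writes $\lambda$ as a Rayleigh quotient, bounds each of the two Rayleigh-quotient summands (spatial and temporal) separately, and then invokes Lemma~\ref{lemma:wghtsumbdlem} to merge the two ratio bounds into a single bound on the combined quotient. You instead encode those same two ingredient bounds as Loewner inequalities, tensor with identities, add, and conjugate by $P^{-1/2}$; the step where you factor out $\min\{\check a,1/2\}$ and $\max\{\hat a,3/2\}$ against a sum of two PSD terms is exactly the content of Lemma~\ref{lemma:wghtsumbdlem} in matrix form. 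So the two arguments are interchangeable, with yours slightly more compact since it avoids pulling out an explicit eigenvector.

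One small point worth flagging: your Loewner chain, as written, yields the closed inclusion $\sigma(P^{-1/2}\mathcal{H}(A)P^{-1/2})\subseteq[\check b,\hat b]$, whereas the statement asserts the open interval. You notice this and sketch a repair via the strictness in Lemma~\ref{temporal_bound}. That is the right idea, but note the paper's own proof has the identical looseness (it derives non-strict inequalities from Lemma~\ref{lemma:wghtsumbdlem} and Assumption~\ref{assumption_2}\textbf{(iii)}, which is stated with a closed interval $[\check a,\hat a]$, and then concludes the open interval). So this is not a gap you introduced relative to the paper; if anything, your discussion of it is more careful than the original.
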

\begin{proof}
Let $(\lambda, \mathbf{w})$ be an arbitrary eigenpair of $P^{-1}   \mathcal{H}(A)$. Then, it holds
\begin{eqnarray*}
    \lambda &=& \frac{ \mathbf{w}^*    \mathcal{H}( A) \mathbf{w} }{\mathbf{w}^* P \mathbf{w}} \\
    &=&\frac{ \mathbf{w}^*    \left( \mathcal{H}(G_J)\otimes I_N + I_J \otimes \kappa \mathcal{H}(B^{(\alpha)}_N)  \right) \mathbf{w} }{ \mathbf{w}^*    \left( \tilde P\otimes I_N + I_J \otimes \kappa \tau(\mathcal{H}(B^{(\alpha)}_N))  \right) \mathbf{w} }.
\end{eqnarray*}
Now, combining Assumption \ref{assumption_2}$~{\bf (iii)}$ with the Rayleigh quotient theorem, we have
\begin{eqnarray*}\nonumber
    \check{a} &\leq&\lambda_{\min} \left( \tilde P^{-1} \mathcal{H}(G_J) \right)  \\\nonumber
    &\leq& 
\frac{ \mathbf{y_1}^*    \mathcal{H}(G)  \mathbf{y_1} }{\mathbf{y_1}^* \tilde P \mathbf{y_1}} \\\label{eqn:tau_W}
&\leq&  \lambda_{\max} \left( \tilde P^{-1} \mathcal{H}(G_J) \right) \leq \hat{a}, 
\end{eqnarray*}
for any nonzero vector $\mathbf{y_1}$.

Also, based on Lemma \ref{temporal_bound}, we know that
\begin{eqnarray*}
    \frac{1}{2} &\leq&\lambda_{\min} \left( \tau \left(\mathcal{H}\left(B^{(\alpha)}_N\right) \right)^{-1}\mathcal{H}\left(B^{(\alpha)}_N\right)   \right)  \\
    &\leq& 
\frac{ \mathbf{y_2}^*   \mathcal{H}\left(B^{(\alpha)}_N\right) \mathbf{y_2} }{\mathbf{y_2}^* \tau \left(\mathcal{H}\left(B^{(\alpha)}_N\right) \right) \mathbf{y_2}} \\
&\leq&  \lambda_{\max} \left(  \tau \left(\mathcal{H}\left(B^{(\alpha)}_N\right) \right)^{-1}\mathcal{H}\left(B^{(\alpha)}_N\right)  \right) \leq \frac{3}{2},
\end{eqnarray*}
for any nonzero vector $\mathbf{y_2}$.

Then, we have
\begin{eqnarray*}
    \check{a} &=& \check{a} \cdot \frac{ \mathbf{w}^*  \left( \tilde P \otimes I_N  \right)   \mathbf{w} }{ \mathbf{w}^*  \left( \tilde P \otimes I_N  \right)  \mathbf{w} } \\
    &\leq& \frac{ \mathbf{w}^*    \left( \mathcal{H}(G_J) \otimes I_N  \right) \mathbf{w} }{ \mathbf{w}^*    \left( \tilde P \otimes I_N   \right) \mathbf{w} } \\
    &\leq& \hat{a} \cdot \frac{ \mathbf{w}^*  \left( \tilde P \otimes I_N  \right)   \mathbf{w} }{ \mathbf{w}^*  \left( \tilde P \otimes I_N  \right)  \mathbf{w} } = \hat{a},
\end{eqnarray*}
and
    \begin{eqnarray*}
    \frac{1}{2} &=& \frac{1}{2} \cdot \frac{ \mathbf{w}^*  \left( I_J \otimes \tau(\mathcal{H}(B^{(\alpha)}_N)) \right)   \mathbf{w} }{ \mathbf{w}^*  \left( I_J \otimes \tau(\mathcal{H}(B^{(\alpha)}_N))  \right)  \mathbf{w} } \\
    &\leq& \frac{ \mathbf{w}^*    \left( I_J \otimes \mathcal{H}(B^{(\alpha)}_N)  \right) \mathbf{w} }{ \mathbf{w}^*    \left( I_J \otimes \tau(\mathcal{H}(B^{(\alpha)}_N))   \right) \mathbf{w} } \\
    &\leq& \frac{3}{2} \cdot \frac{ \mathbf{w}^*    \left( I_J \otimes \tau(\mathcal{H}(B^{(\alpha)}_N))   \right) \mathbf{w} }{ \mathbf{w}^*   \left( I_J \otimes \tau(\mathcal{H}(B^{(\alpha)}_N))   \right) \mathbf{w} } = \frac{3}{2}.
\end{eqnarray*}

By Lemma \ref{lemma:wghtsumbdlem}, it follows that
\begin{equation*}
    \min{\left\{\check{a},\frac{1}{2}\right\} }\leq \frac{ \mathbf{w}^*    \left( \mathcal{H}(G_J) \otimes I_N + I_J \otimes \kappa \mathcal{H}(B^{(\alpha)}_N)  \right) \mathbf{w} }{ \mathbf{w}^*    \left(\tilde P \otimes I_N + I_J \otimes \kappa \tau(\mathcal{H}(B^{(\alpha)}_N))  \right) \mathbf{w} }\\
    \leq\max{\left\{ \hat{a},\frac{3}{2}\right\} }, 
\end{equation*}
which implies $\lambda \in (\check{b}, \hat{b})$. The proof is complete.
\end{proof}

\begin{lemma}\label{lemma:eigen_Skew}
Let $A, P$ be the matrices defined in \eqref{aaosystem} and \eqref{practical_pre}, respectively. Then,
\begin{equation*}
    \rho\left( \mathcal{S}\left( P^{-1/2} A P^{-1/2} \right)\right) \leq \varsigma,    
\end{equation*}
where
\begin{eqnarray*}
        \varsigma &=& \left\{
    \begin{aligned}
    &\frac{3}{2}\tan \left( \frac{\alpha}{2} \pi\right),~~~~~\text{if}~G~\text{is~symmetric},\\
    &\max\left\{\mu_{\beta} \hat{a}, \frac{3}{2}\tan \left( \frac{\alpha}{2} \pi\right)\right\}, ~~~~~\text{if}~G~\text{is nonsymmetric,}
    \end{aligned}
    \right.
    \end{eqnarray*}
with $\mu_{\beta}$ and $\hat{a}$ defined in Assumption \ref{assumption_2}$~{\bf(i)}$.

\end{lemma}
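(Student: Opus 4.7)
The plan is to characterize $\rho\bigl(\mathcal{S}(P^{-1/2} A P^{-1/2})\bigr) = \rho\bigl(P^{-1/2} \mathcal{S}(A) P^{-1/2}\bigr)$ as a complex Rayleigh quotient and then peel off the two Kronecker summands separately. Setting $K := P^{-1/2} \mathcal{S}(A) P^{-1/2}$, the matrix $iK$ is Hermitian, so
\[
\rho(K) \;=\; \rho(iK) \;=\; \max_{\mathbf{v}\neq 0} \frac{|\mathbf{v}^* K \mathbf{v}|}{\mathbf{v}^*\mathbf{v}} \;=\; \max_{\mathbf{w}\neq 0} \frac{|\mathbf{w}^* \mathcal{S}(A) \mathbf{w}|}{\mathbf{w}^* P \mathbf{w}}
\]
after the substitution $\mathbf{v} = P^{1/2}\mathbf{w}$, and note that $\mathbf{w}^*\mathcal{S}(A)\mathbf{w}$ is purely imaginary so its absolute value makes sense. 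Using $\mathcal{S}(A) = \mathcal{S}(G_J)\otimes I_N + \kappa I_J\otimes \mathcal{S}(B^{(\alpha)}_N)$ and the matching decomposition of $P$, the triangle inequality on the numerator combined with Lemma \ref{lemma:wghtsumbdlem} reduces the problem to bounding the two block Rayleigh quotients
\[
\frac{|\mathbf{w}^*(\mathcal{S}(G_J)\otimes I_N)\mathbf{w}|}{\mathbf{w}^*(\tilde P\otimes I_N)\mathbf{w}} \quad\text{and}\quad \frac{|\mathbf{w}^*(I_J\otimes \mathcal{S}(B^{(\alpha)}_N))\mathbf{w}|}{\mathbf{w}^*(I_J\otimes \tau(\mathcal{H}(B^{(\alpha)}_N)))\mathbf{w}},
\]
with the two denominators strictly positive because of the SPD-ness of $\tilde P$ (Assumption \ref{assumption_2}(i)) and of $\tau(\mathcal{H}(B^{(\alpha)}_N))$ (established in Lemma \ref{lemma:PS_SPD}).

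Next I would bound each block ratio by a single scalar via a fibrewise Kronecker decomposition. Writing $\mathbf{w}=\sum_k \mathbf{y}_k\otimes \mathbf{e}_k^{(N)}$ turns $\mathbf{w}^*(C\otimes I_N)\mathbf{w}$ into $\sum_k \mathbf{y}_k^* C\mathbf{y}_k$, while the symmetric decomposition $\mathbf{w}=\sum_k \mathbf{e}_k^{(J)}\otimes \mathbf{z}_k$ handles the $I_J\otimes\bullet$ block. A second invocation of Lemma \ref{lemma:wghtsumbdlem} on these sums then shows that it suffices to establish the pointwise estimates $|\mathbf{y}^* \mathcal{S}(G_J)\mathbf{y}| \leq \mu_\beta \hat a\, \mathbf{y}^* \tilde P \mathbf{y}$ and $|\mathbf{z}^* \mathcal{S}(B^{(\alpha)}_N)\mathbf{z}| \leq \tfrac{3}{2}\tan(\tfrac{\alpha\pi}{2})\, \mathbf{z}^* \tau(\mathcal{H}(B^{(\alpha)}_N))\mathbf{z}$. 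Each of these is obtained by chaining two Rayleigh-quotient bounds: for the spatial block, Lemma \ref{lemma:Toeplitz_localization} combined with Assumption \ref{assumption_1} gives $\rho\bigl(\mathcal{H}(G_J)^{-1/2}\mathcal{S}(G_J)\mathcal{H}(G_J)^{-1/2}\bigr)\leq \mu_\beta$, whence $|\mathbf{y}^*\mathcal{S}(G_J)\mathbf{y}|\leq \mu_\beta\, \mathbf{y}^*\mathcal{H}(G_J)\mathbf{y}$, and Assumption \ref{assumption_2}(iii) then yields $\mathcal{H}(G_J)\preceq \hat a \tilde P$; for the temporal block, Lemma \ref{l1_quotient} delivers $\rho\bigl(\mathcal{H}(B^{(\alpha)}_N)^{-1/2}\mathcal{S}(B^{(\alpha)}_N)\mathcal{H}(B^{(\alpha)}_N)^{-1/2}\bigr)\leq \tan(\tfrac{\alpha\pi}{2})$, and Lemma \ref{temporal_bound} provides $\mathcal{H}(B^{(\alpha)}_N)\preceq \tfrac{3}{2}\tau(\mathcal{H}(B^{(\alpha)}_N))$. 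In the symmetric-$G_J$ case, $\mathcal{S}(G_J)=0$ and only the temporal bound survives; in the nonsymmetric case, the maximum of the two constants is precisely $\varsigma$.

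The main obstacle will be the careful bookkeeping required to pass the real skew-symmetric Rayleigh-quotient argument through two successive Kronecker splittings and two applications of Lemma \ref{lemma:wghtsumbdlem}: in particular, verifying that each fibrewise form $\mathbf{y}_k^*\mathcal{S}(G_J)\mathbf{y}_k$ (and its temporal counterpart) is purely imaginary, so that the triangle inequality delivers a clean $|\cdot|$-bound with the same constant, and checking that every denominator produced by the weighted-sum lemma remains strictly positive. Once this is in place, the two core pointwise inequalities follow directly from the spectral-localization machinery already proved, and the final estimate assembles without additional calculation.
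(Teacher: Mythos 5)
Your proposal is correct and follows essentially the same route as the paper's proof: write the spectral radius as a Rayleigh quotient $|\mathbf{w}^*\mathcal{S}(A)\mathbf{w}|/\mathbf{w}^*P\mathbf{w}$, split both numerator and denominator along the Kronecker sum via Lemma~\ref{lemma:wghtsumbdlem}, and bound each block by chaining $\mathcal{S}(\cdot)$-vs-$\mathcal{H}(\cdot)$ with $\mathcal{H}(\cdot)$-vs-preconditioner spectral estimates. The only difference is cosmetic: you spell out the fibrewise Kronecker reduction and derive the quadratic-form inequalities $|\mathbf{y}^*\mathcal{S}(\cdot)\mathbf{y}|\le c\,\mathbf{y}^*\mathcal{H}(\cdot)\mathbf{y}$ directly from Lemma~\ref{lemma:Toeplitz_localization}, whereas the paper treats the Kronecker step informally and cites an external reference for those inequalities.
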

\begin{proof}
     Let $(\lambda, \mathbf{w})$ be an arbitrary eigenpair of $P^{-1}   \mathcal{S}( A)$. Then, it holds
\begin{eqnarray*}
        |\lambda| &=& \left| \frac{ \mathbf{w}^*    \mathcal{S}( A) \mathbf{w} }{\mathbf{w}^* P \mathbf{w}} \right|\\
        &=&\frac{\left| \mathbf{w}^*    \left( \mathcal{S}(G_J) \otimes I_N + I_J \otimes \kappa \mathcal{S}(B^{(\alpha)}_N)  \right) \mathbf{w} \right|}{ \mathbf{w}^*    \left( \tilde P \otimes I_N + I_J \otimes \kappa \tau(\mathcal{H}(B^{(\alpha)}_N))  \right) \mathbf{w} }\\
        &\leq&\frac{\left| \mathbf{w}^*    \left( \mathcal{S}(G_J) \otimes I_N \right) \mathbf{w} \right| + \left| \mathbf{w}^* \left( I_J \otimes \kappa \mathcal{S}(B^{(\alpha)}_N)  \right) \mathbf{w} \right|}{ \mathbf{w}^*    \left( \tilde P \otimes I_N + I_J \otimes \kappa \tau(\mathcal{H}(B^{(\alpha)}_N))  \right) \mathbf{w} }\\
        &\leq& \left\{
    \begin{aligned}
    &\frac{\left| \mathbf{w}^*    \left( I_J \otimes \mathcal{S}(B^{(\alpha)}_N)  \right) \mathbf{w} \right|}{ \mathbf{w}^*    \left( I_J \otimes \tau(\mathcal{H}(B^{(\alpha)}_N))  \right) \mathbf{w} },~~~~~\text{if}~G_J~\text{is~symmetric},\\
    &\max\left\{\frac{\left| \mathbf{w}^*    \left(\mathcal{S}(G_J) \otimes I_N  \right) \mathbf{w} \right|}{ \mathbf{w}^*    \left( \tilde P \otimes I_N  \right) \mathbf{w} }, \frac{\left| \mathbf{w}^*    \left( I_J \otimes \mathcal{S}(B^{(\alpha)}_N)  \right) \mathbf{w} \right|}{ \mathbf{w}^*    \left( I_J \otimes \tau(\mathcal{H}(B^{(\alpha)}_N))  \right) \mathbf{w} }\right\}, ~~~~~\text{if}~G_J~\text{is~nonsymmetric.}
    \end{aligned}
    \right.
    \end{eqnarray*}
By noting that $\left|\operatorname{Im}(w_{\boldsymbol{\beta}}(\boldsymbol{\theta}))\right| \leq \mu_{\beta} \operatorname{Re}(w_{\boldsymbol{\beta}}(\boldsymbol{\theta}))$ and $\left|\operatorname{Im}(g_{\alpha}( \phi ))\right|<\tan{\left( \frac{\alpha}{2} \pi \right)}\operatorname{Re}(g_{\alpha}( \phi ))$, we have \cite{lin2023tau}
\begin{equation*}
\left| \mathbf{y}^*\mathcal{S}\left(G_J\right) \mathbf{y} \right|  \leq \mu_{\beta} \mathbf{y}^*   \mathcal{H}\left(G_J\right) \mathbf{y} ~~~~~~(\text{if}~G_J~\text{is nonsymmetric})
\end{equation*}
and
\begin{equation*}
\left| \mathbf{y}^*\mathcal{S}\left(B^{(\alpha)}_N\right) \mathbf{y} \right|  \leq \tan \left( \frac{\alpha}{2} \pi\right) \mathbf{y}^*   \mathcal{H}\left(B^{(\alpha)}_N\right) \mathbf{y}.
\end{equation*}
Thus,
\begin{equation*}
        \frac{ \left| \mathbf{y}^*   \mathcal{S}\left(G_J\right) \mathbf{y} \right|}{\mathbf{y}^* \tilde P \mathbf{y}} = \frac{\left| \mathbf{y}^*   \mathcal{S}\left(G_J\right) \mathbf{y} \right| }{\mathbf{y}^* \mathcal{H}\left(G_J\right)  \mathbf{y}} \cdot \frac{\mathbf{y}^* \mathcal{H}\left(G_J\right)  \mathbf{y}}{\mathbf{y}^* \tilde P \mathbf{y}} \leq \mu_{\beta} \hat{a}
    \end{equation*} 
and
    \begin{equation*}
        \frac{ \left| \mathbf{y}^*   \mathcal{S}\left(B^{(\alpha)}_N\right) \mathbf{y} \right|}{\mathbf{y}^* \tau \left(\mathcal{H}\left(B^{(\alpha)}_N\right) \right) \mathbf{y}} = \frac{\left| \mathbf{y}^*   \mathcal{S}\left(B^{(\alpha)}_N\right) \mathbf{y} \right| }{\mathbf{y}^* \mathcal{H}\left(B^{(\alpha)}_N\right)  \mathbf{y}} \cdot \frac{\mathbf{y}^* \mathcal{H}\left(B^{(\alpha)}_N\right)  \mathbf{y}}{\mathbf{y}^* \tau \left(\mathcal{H}\left(B^{(\alpha)}_N\right) \right) \mathbf{y}} \leq \frac{3}{2}\tan \left( \frac{\alpha}{2} \pi\right)
    \end{equation*}
for any nonzero vector $\mathbf{y}$.

Therefore, we have
\begin{eqnarray*}
\rho\left( \mathcal{S}\left( P^{-1/2} A P^{-1/2} \right)\right) \leq \max\left\{\mu_{\beta} \hat{a}, \frac{3}{2}\tan \left( \frac{\alpha}{2} \pi\right)\right\}.
\end{eqnarray*}
Namely, 
\begin{eqnarray*}
    \rho\left( \mathcal{S}\left( P^{-1/2} A P^{-1/2} \right)\right)&\leq& \left\{
\begin{aligned}
&\frac{3}{2}\tan \left( \frac{\alpha}{2} \pi\right),~~~~~\text{if}~G_J~\text{is~symmetric},\\
&\max\left\{\mu_{\beta} \hat{a}, \frac{3}{2}\tan \left( \frac{\alpha}{2} \pi\right)\right\}, ~~~~~\text{if}~G_J~\text{is nonsymmetric.}
\end{aligned}
\right.
\end{eqnarray*}
The proof is complete.
\end{proof}

\begin{theorem}\label{theorem_main}
    Let $A, P$ be the matrices defined in \eqref{aaosystem} and \eqref{practical_pre}, respectively. Then, the residuals of the iterates generated by applying (restarted or non-restarted) GMRES with an arbitrary initial guess to solve $P^{-\frac{1}{2}} A P^{-\frac{1}{2}}\mathbf{v}=P^{-\frac{1}{2}}\mathbf{w}$ satisfy
    \[
       \| \mathbf{r}_k \|_2 \leq  \omega^{k} \| \mathbf{r}_0 \|_2,
    \]
    where $\mathbf{r}_k = P^{-\frac{1}{2}}\mathbf{w} - P^{-\frac{1}{2}} A P^{-\frac{1}{2}}\mathbf{v}_k$ is the residual vector at the $k$-th GMRES iteration with $\mathbf{v}_k$ ($k \geq 1$) being the corresponding iterative solution, and $\omega$ is a constant independent of $N$ and $J$ defined as follows
        $$
        \omega := \sqrt{\frac{2+4\varsigma^2}{3+4\varsigma^2}} \in \left[ \sqrt{\frac{2}{3}},1  \right) \subset (0,1),
        $$
        with $\varsigma$ defined in Lemma \ref{lemma:eigen_Skew}.
\end{theorem}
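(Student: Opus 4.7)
The plan is to reduce the result to the Elman-type residual bound (Lemma \ref{lemma:gmres}) applied to the symmetrically preconditioned matrix $Z := P^{-\frac{1}{2}} A P^{-\frac{1}{2}}$. Because $P^{-\frac{1}{2}}$ is symmetric, we immediately have $\mathcal{H}(Z) = P^{-\frac{1}{2}} \mathcal{H}(A) P^{-\frac{1}{2}}$ and $\mathcal{S}(Z) = P^{-\frac{1}{2}} \mathcal{S}(A) P^{-\frac{1}{2}}$. The first step is to verify the positivity hypothesis $\mathcal{H}(Z) \succ \mathcal{O}$ of Lemma \ref{lemma:gmres}, which is immediate from Lemma \ref{prop:eigen_S} since the spectrum of $\mathcal{H}(Z)$ lies in $(\check{b}, \hat{b})$ with $\check{b} > 0$.

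Next, I would harvest the three spectral quantities driving the bound from the preceding lemmas: Lemma \ref{prop:eigen_S} supplies $\lambda_{\min}(\mathcal{H}(Z)) \geq \check{b}$ and $\lambda_{\max}(\mathcal{H}(Z)) \leq \hat{b}$, which for the preconditioner constructions discussed after Assumption \ref{assumption_2} specialize to $\check{b} = \tfrac{1}{2}$ and $\hat{b} = \tfrac{3}{2}$, while Lemma \ref{lemma:eigen_Skew} supplies $\rho(\mathcal{S}(Z)) \leq \varsigma$. Feeding these into Lemma \ref{lemma:gmres} yields
\[
\|\mathbf{r}_k\|_2 \leq \left( 1 - \frac{\lambda_{\min}(\mathcal{H}(Z))^2}{\lambda_{\min}(\mathcal{H}(Z)) \lambda_{\max}(\mathcal{H}(Z)) + \rho(\mathcal{S}(Z))^2} \right)^{k/2} \|\mathbf{r}_0\|_2.
\]

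The key analytic step is to lower bound the inner quotient uniformly over the admissible ranges. A quick partial-derivative check shows that $x \mapsto x^2/(xy + r^2)$ is increasing in $x>0$ while $y \mapsto x^2/(xy + r^2)$ and $r \mapsto x^2/(xy + r^2)$ are decreasing for positive arguments, so the minimum is attained at $x = \check{b}$, $y = \hat{b}$, $r = \varsigma$. Substituting $\check{b} = 1/2$ and $\hat{b} = 3/2$ gives
\[
\frac{\lambda_{\min}(\mathcal{H}(Z))^2}{\lambda_{\min}(\mathcal{H}(Z)) \lambda_{\max}(\mathcal{H}(Z)) + \rho(\mathcal{S}(Z))^2} \geq \frac{1/4}{3/4 + \varsigma^2} = \frac{1}{3 + 4\varsigma^2},
\]
so the outer contraction factor is bounded by $1 - 1/(3 + 4\varsigma^2) = (2 + 4\varsigma^2)/(3 + 4\varsigma^2) = \omega^2$, delivering the claim.

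Since the substantive work --- the spectral localization of $\mathcal{H}(A)$ and $\mathcal{S}(A)$ relative to $P$ --- has already been carried out in Lemmas \ref{prop:eigen_S} and \ref{lemma:eigen_Skew}, the main obstacle here is essentially bookkeeping: one must verify the monotonicity directions of the quotient $x^2/(xy+r^2)$ carefully so that substituting the worst-case endpoints $(\check{b}, \hat{b}, \varsigma)$ yields a genuine lower (not upper) bound, and then check that the arithmetic collapses cleanly to the stated $\omega^2$. The independence of $\omega$ from $N$ and $J$ then follows because $\varsigma$, $\check{b}$, $\hat{b}$ are all mesh-independent constants by Assumption \ref{assumption_2} and Lemma \ref{l1_quotient}.
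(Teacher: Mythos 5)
Your proposal matches the paper's proof exactly: both verify $\mathcal{H}(P^{-1/2}AP^{-1/2})\succ\mathcal{O}$ via Lemma \ref{prop:eigen_S}, bound $\rho(\mathcal{S}(P^{-1/2}AP^{-1/2}))$ via Lemma \ref{lemma:eigen_Skew}, and feed these into Lemma \ref{lemma:gmres} to obtain the contraction factor $\omega^2 = (2+4\varsigma^2)/(3+4\varsigma^2)$. You are in fact slightly more careful than the paper in one spot: you explicitly check the monotonicity of $x\mapsto x^2/(xy+r^2)$ in each argument to justify that substituting the worst-case endpoints yields a genuine lower bound on the Elman quotient, whereas the paper substitutes $1/2$, $3/2$, $\varsigma$ without comment.

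One shared subtlety worth noting (present in the paper as well as your write-up): Lemma \ref{prop:eigen_S} only guarantees $\lambda_{\min}(\mathcal{H}(Z))>\check{b}=\min\{\check{a},1/2\}$ and $\lambda_{\max}(\mathcal{H}(Z))<\hat{b}=\max\{\hat{a},3/2\}$, so the substitution $\check{b}=1/2$, $\hat{b}=3/2$ is legitimate only when $\check{a}\ge 1/2$ and $\hat{a}\le 3/2$ (as happens for the $\tau$-preconditioners with spectrum in $(1/2,3/2)$). You flag this as a specialization, which is more transparent than the paper's proof; the rigorous general statement would keep $\check{b},\hat{b}$ in $\omega$, giving $\omega = \sqrt{1 - \check{b}^2/(\check{b}\hat{b}+\varsigma^2)}$, which is still mesh-independent.
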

\begin{proof}
First of all, since 
\begin{eqnarray*}
    \mathcal{H} \left( P^{-\frac{1}{2}} A P^{-\frac{1}{2}} \right)
    &=&  P^{-\frac{1}{2}} \mathcal{H}(A) P^{-\frac{1}{2}} \\
    &\succ& \mathcal{O},
\end{eqnarray*}
     Lemma \ref{lemma:gmres} can be used.

Also, $\mathcal{S}\left( P^{-1/2} A P^{-1/2} \right)=P^{-\frac{1}{2}} \mathcal{S} (A) P^{-\frac{1}{2}}$, 
we know by Lemma \ref{lemma:eigen_Skew} that
\begin{eqnarray*}
\rho\left( \mathcal{S}\left( P^{-1/2} A P^{-1/2} \right)\right) \leq \varsigma.
\end{eqnarray*}

Thus, combining Lemmas \ref{lemma:gmres}, \ref{prop:eigen_S} and \ref{lemma:eigen_Skew}, the residuals of the iterates generated by applying (restarted or non-restarted) GMRES with an arbitrary initial guess to solve $P^{-\frac{1}{2}} A P^{-\frac{1}{2}} \mathbf{v}=P^{-\frac{1}{2}}\mathbf{w}$ satisfy
\begin{eqnarray*}
\| \mathbf{r}_k \|_2 &\leq& \left( \sqrt{1 - \left(\frac{ (\frac{1}{2})^2}{ (\frac{1}{2})(\frac{3}{2}) + \varsigma^2} \right)}     \right)^{k} \| \mathbf{r}_0 \|_2\\
&=& \left(\sqrt{\frac{2+4\varsigma^2}{3+4\varsigma^2}}  \right)^{k} \| \mathbf{r}_0 \|_2.
\end{eqnarray*}    
\end{proof}

With Lemma \ref{lemma:PS_SPD}, a direct application of Lemma \ref{residual_relation} using the practical preconditioner $P$, as defined in (\ref{practical_pre}), yields the following theorem. This theorem guarantees that GMRES with $P$ can achieve mesh-independent convergence for $A$.

\begin{theorem}
Let $\hat{\mathbf{u}}_0$ be the initial guess for (\ref{two_sided_sys}) and $\mathbf{u}_0 := P^{-1/2}\hat{\mathbf{u}}_0$ be the initial guess for (\ref{one_sided_sys}). Let $\mathbf{u}_j$ ($\hat{\mathbf{u}}_j$, respectively) be the $j$-th $(j\geq1)$ iteration solution derived by applying GMRES solver to (\ref{one_sided_sys}) ((\ref{two_sided_sys}), respectively) with $\mathbf{u}_0$ ($\hat{\mathbf{u}}_0$, respectively) as their initial guess. Then,
\begin{equation*}
\left\|\mathbf{r}_j\right\|_2 \leq \frac{1}{\sqrt{\check{c}}}\left\|\hat{\mathbf{r}}_j\right\|_2
\end{equation*}
where $\mathbf{r}_j:=P^{-1} \mathbf{f} - P^{-1} A\mathbf{u}_j$ ($\hat{\mathbf{r}}_j := P^{-1/2} \mathbf{f} - P^{-1/2} AP^{-1/2} \hat{\mathbf{u}}_j$, respectively) denotes the residual vector at the $j$-th GMRES iteration for (\ref{one_sided_sys}) ((\ref{two_sided_sys}), respectively) and $\check{c}$ defined in Assumption \ref{assumption_2}$~{\bf(i)}$ is a constant independent of $N$ and $J$. 
\end{theorem}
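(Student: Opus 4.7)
The plan is to observe that this final theorem is essentially Lemma~\ref{residual_relation} applied with the practical preconditioner $P$ from \eqref{practical_pre} substituted for $A_R$ and the all-at-once matrix $A$ substituted for $A_n$. The first task is to confirm that the proof of Lemma~\ref{residual_relation} never uses the specific form $A_R=\mathcal{H}(A_n)$; it relies only on two properties of $A_R$: that $A_R$ is SPD (so that $A_R^{\pm 1/2}$ are well-defined) and that GMRES obeys the minimum-residual characterisation of Lemma~\ref{lemma:gmres_residual}. Lemma~\ref{lemma:PS_SPD} supplies the SPD property for $P$, so the full argument carries over.

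Next, I would re-run the Krylov-subspace manipulation verbatim with $P$ replacing $A_R$. The key identity
\[
(P^{-1/2} A P^{-1/2})^k = P^{1/2}(P^{-1}A)^k P^{-1/2}
\]
follows by a routine induction and translates $\mathcal{K}_j(P^{-1/2} A P^{-1/2},\hat{\mathbf{r}}_0)$ into $\mathcal{K}_j(P^{-1}A,\mathbf{r}_0)$ after left-multiplication by $P^{1/2}$, so that $P^{-1/2}\hat{\mathbf{u}}_j\in \mathbf{u}_0+\mathcal{K}_j(P^{-1}A,\mathbf{r}_0)$. The minimum-residual property of one-sided preconditioned GMRES (Lemma~\ref{lemma:gmres_residual}) then yields
\[
\|\mathbf{r}_j\|_2 \leq \|P^{-1}\mathbf{f} - P^{-1}A P^{-1/2}\hat{\mathbf{u}}_j\|_2 = \|P^{-1/2}\hat{\mathbf{r}}_j\|_2 = \sqrt{\hat{\mathbf{r}}_j^{\top} P^{-1}\hat{\mathbf{r}}_j}.
\]

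The final step converts the $P^{-1}$-weighted norm on the right-hand side into the Euclidean norm. Since $P$ is SPD with $\lambda_{\min}(P)\geq \check{c}$ by Lemma~\ref{lemma:PS_SPD}, we have $P^{-1}\preceq \check{c}^{-1} I$, giving $\sqrt{\hat{\mathbf{r}}_j^{\top} P^{-1}\hat{\mathbf{r}}_j}\leq \check{c}^{-1/2}\|\hat{\mathbf{r}}_j\|_2$, which is the stated inequality. As $\check{c}$ is independent of both $N$ and $J$ by Assumption~\ref{assumption_2}(i), the bound is mesh-independent, which is exactly what is needed to certify the optimal convergence of GMRES with $P$ for the all-at-once system. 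There is no substantive obstacle in this proof: the only delicate point is recording that the proof of Lemma~\ref{residual_relation} uses nothing about $A_R$ beyond its SPD property, and then invoking Lemma~\ref{lemma:PS_SPD} at the end to replace $\lambda_{\min}(P)^{-1/2}$ by the size-independent constant $\check{c}^{-1/2}$.
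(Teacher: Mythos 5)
Your proposal is exactly the paper's argument: the paper states the theorem immediately after Lemma~\ref{lemma:PS_SPD} as a ``direct application of Lemma~\ref{residual_relation} using the practical preconditioner $P$,'' which is precisely your observation that the Krylov-subspace manipulation in Lemma~\ref{residual_relation} uses only the SPD property of the preconditioner, so $P$ can replace $A_R$, with $\lambda_{\min}(P)\geq\check{c}$ from Lemma~\ref{lemma:PS_SPD} supplying the final constant. Correct, and no substantive deviation from the paper's route.
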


\begin{remark}\label{Remark_variable_Laplacian}
It is worth noting that if $\mathcal{L}$ in Problem (\ref{main_problem}) is the variable coefficient Laplacian operator $\nabla \cdot(a(\boldsymbol{x})\nabla)$, then the discretization matrix $L_1$ from the constant Laplacian $-\Delta$ is a suitable choice for $\tilde P$ in (\ref{practical_pre}), provided that Assumption \ref{assumption_2} is satisfied; see \cite{lin2021parallel,lin2024matching} for more detail. Our convergence analysis in Subsection \ref{sec:modified_P} is still applicable, even though in this case the corresponding all-at-once matrix $A$ in (\ref{aaosystem}) is no longer a multilevel Toeplitz matrix.
\end{remark}

\section{Numerical experiments}\label{sec:numeric}

In this section, some numerical results are presented to verify the numerical accuracy and efficiency of the proposed preconditioner. All numerical experiments are performed using MATLAB 2021a on a HP Z620 workstation equipped with dual Xeon E5-2690 v2 10-Cores 3.0GHz CPUs, 128GB RAM running Ubuntu 20.04 LTS.

In the tables, `CPU(s)' represents the CPU time in seconds for solving the system (\ref{aaosystem});  `Iter' stands for the iteration numbers of different methods; and denotes by 'Error', the error between the numerical solution and the exact solution under the discrete maximum norm, i.e.,
\begin{equation*}
\textrm{Error}=\left\|u^*-\mathbf{u}\right\|_{\infty},
\end{equation*}
where $u^*$ is the exact solution. When CPU time is large than 3000 seconds, we stop the iteration by hands and denote the results in tables as `-'.

In the implementations of GMRES and PGMRES methods, we adopt the Matlab built-in function $\mathbf{gmres}$ with $\mathit{restart}=20$ and $\mathit{maxit}=1000$. The initial guess of GMRES methods (including PGMRES method) at each time step is chosen as the zero vector, and the stopping criterion is set as
$$
\frac{\|\mathbf{r}_k\|_2}{\|\mathbf{r}_0\|_2}\leq 10^{-8},
$$
where $r_k$ denotes the residual vector at the $k$-th iteration. Furthermore, to reduce the operation costs, all the matrix-vector multiplications in GMRES and PGMRES methods are fast evaluated via the MATLAB built-in functions {\bf fft}, {\bf ifft} and {\bf dst}.

\begin{example} \label{ex_1}
{\rm

Consider the problem (\ref{main_problem}) with $\mathcal{L}=\Delta$, $\Omega=(0,1)^2$, $T=1$ and the source term 
\begin{equation*}
\begin{aligned}
f(x_1, x_2, t)= & \frac{6 t^{3-\alpha}}{\Gamma(4-\alpha)} x_1^3 x_2^3(1-x_1)^2(1-x_2)^2 \\
& -t^3\left[x_2^3(1-x_2)^2\left(20 x_1^3-24 x_1^2+6 x_1\right)+x_1^3(1-x_1)^2\left(20 x_2^3-24 x_2^2+6 x_2\right)\right] .
\end{aligned}
\end{equation*}
The exact solution $u(x_1,x_2,t)=t^3 x_1^3 x_2^3(1-x_1)^2(1-x_2)^2$.

}
\end{example}

\begin{table}[t]%
\centering
\tabcolsep=5pt
    \caption{Results of GMRES method with and without preconditioner for Example \ref{ex_1} with $\mu=\frac{1}{2^{8}}$.}
    \label{table1}
    \begin{tabular}{cccccccc}
  \toprule
   \multirow{2}{*}{$\alpha$}  & \multirow{2}{*}{$h$}  &  \multicolumn{2}{c}{GMRES} &  \multicolumn{3}{c}{PGMRES}\\  \cmidrule(r){3-4} \cmidrule(r){5-7} 
     &      &CPU(s)  &Iter &Error &CPU(s)  &Iter \\
\hline
  \multirow{4}{*}{0.2}
         &$\frac{1}{32}$    &7.62        &244   &5.3880e-6   &2.29  &5      \\
         &$\frac{1}{64}$    &77.44       &768   &1.3520e-6   &3.36  &5       \\
         &$\frac{1}{128}$   &1192.54     &2832  &3.3875e-7   &9.24  &5        \\
         &$\frac{1}{256}$   &-           &-     &8.5437e-8   &26.72 &5        \\
\hline
  \multirow{4}{*}{0.5}                                                                                 &$\frac{1}{32}$    &7.33        &239   &5.3067e-6   &4.11  &10      \\
         &$\frac{1}{64}$    &75.61       &748   &1.3397e-6   &6.17  &10       \\
         &$\frac{1}{128}$   &1134.25     &2745  &3.4382e-7   &16.61 &10        \\
         &$\frac{1}{256}$   &-           &-     &9.4982e-8   &48.29 &10        \\    
\hline
  \multirow{4}{*}{0.8}                            
         &$\frac{1}{32}$    &7.13        &230   &5.2821e-6   &9.00   &21      \\
         &$\frac{1}{64}$    &70.89       &710   &1.4028e-6   &13.54  &21       \\
         &$\frac{1}{128}$   &1060.29     &2594  &4.3152e-7   &36.21  &21        \\
         &$\frac{1}{256}$   &-           &-     &1.9424e-7   &106.33 &21        \\
\bottomrule
    \end{tabular}
\end{table}

\begin{table}[t]%
\centering
\tabcolsep=5pt
    \caption{Results of GMRES method with and without preconditioner for Example \ref{ex_1} with $h=h_1=h_2=\frac{1}{2^{8}}$.}
    \label{table2}
    \begin{tabular}{ccccccc}
  \toprule
   \multirow{2}{*}{$\alpha$}  & \multirow{2}{*}{$\mu$}  &  \multicolumn{2}{c}{GMRES} &  \multicolumn{3}{c}{PGMRES}\\  \cmidrule(r){3-4} \cmidrule(r){5-7} 
     &     &CPU(s)  &Iter &Error &CPU(s)  &Iter \\
\hline
  \multirow{4}{*}{0.2}
         &$\frac{1}{8}$     &457.98    &11007   &4.9161e-7   &0.64  &4      \\
         &$\frac{1}{16}$    &887.62    &10998   &2.0965e-7   &1.42  &5       \\
         &$\frac{1}{32}$    &1969.35   &10995   &1.2170e-7   &3.12  &5        \\
         &$\frac{1}{64}$    &-         &-   &9.5547e-8   &6.83  &5        \\
\hline
  \multirow{4}{*}{0.5}                                                                                               
         &$\frac{1}{8}$     &437.65    &10717   &2.2444e-6   &0.74  &6      \\
         &$\frac{1}{16}$    &835.34    &10678   &8.8390e-7   &1.85  &7       \\
         &$\frac{1}{32}$    &1923.86   &10659   &3.6994e-7   &4.07  &7        \\
         &$\frac{1}{64}$    &-         &-       &1.8284e-7   &10.18 &8        \\    
\hline
  \multirow{4}{*}{0.8}                            
         &$\frac{1}{8}$     &432.29    &10333   &7.3852e-6   &0.92  &8     \\
         &$\frac{1}{16}$    &789.40    &10193   &3.3541e-6   &2.53  &10       \\
         &$\frac{1}{32}$    &1822.47   &10113   &1.5222e-6   &6.69  &12        \\
         &$\frac{1}{64}$    &-         &-       &7.0696e-7   &17.97 &14        \\
\bottomrule
    \end{tabular}
\end{table}

In this example, we use the central difference discretization for $\Delta$.
Errors, iteration numbers and CPU times of GMRES method with and without preconditioner for different $\alpha$ are listed in Tables \ref{table1} and \ref{table2}. It can be seen from these tables that the proposed preconditioning strategy can significantly reduce the iteration numbers and CPU times. In addition, since in this case, the difference of the coefficient matrix $A$ and the preconditioner $P$ lies in the matrix from temporal discretization, when $\mu$ changes, Table \ref{table2} shows that the proposed preconditioner is efficient for all $\alpha \in (0,1)$, especially when $\alpha$ is close to zero, with small and almost constant iteration numbers, which coincides with Theorem \ref{theorem_main}.


\begin{example} \label{ex_2}
{\rm

Consider the problem (\ref{main_problem}) with $\mathcal{L}=\sum\limits_{i=1}^{d}c_i\frac{\partial^{\beta_{i}}u(\boldsymbol{x},t)}{\partial|x_{i}|^{\beta_{i}}}$, $d=2$, $T=1$, $\Omega=(0,1)^2$, $c_1=c_2=T=1$ and the source term 
\begin{equation*}
\begin{aligned}
f(x_1, x_2, t)= & \frac{t^{\alpha+1}}{2 \cos \left(\beta_1 \pi / 2\right)}\times\left\{\left(\frac{2\left[x_1^{2-\beta_1}+\left(1-x_1\right)^{2-\beta_1}\right]}{\Gamma\left(3-\beta_1\right)}-\frac{12\left[x_1^{3-\beta_1}+\left(1-x_1\right)^{3-\beta_1}\right]}{\Gamma\left(4-\beta_1\right)}\right. \right.\\
& \left.\left.+\frac{24 \left[x_1^{4-\beta_1}+\left(1-x_1\right)^{4-\beta_1}\right]}{\Gamma\left(5-\beta_1\right)}\right) x_2^2\left(1-x_2\right)^2\right\}\\
&+\frac{t^{\alpha+1}}{2 \cos \left(\beta_2 \pi / 2\right)}\times\left\{\left(\frac{2\left[x_2^{2-\beta_2}+\left(1-x_2\right)^{2-\beta_2}\right]}{\Gamma\left(3-\beta_2\right)}-\frac{12\left[x_2^{3-\beta_2}+\left(1-x_2\right)^{3-\beta_2}\right]}{\Gamma\left(4-\beta_2\right)}\right. \right.\\
& \left.\left.+\frac{24 \left[x_2^{4-\beta_2}+\left(1-x_2\right)^{4-\beta_2}\right]}{\Gamma\left(5-\beta_2\right)}\right) x_1^2\left(1-x_1\right)^2\right\}\\
&+\Gamma(\alpha+2) t x_1^2\left(1-x_1\right)^2 x_2^2\left(1-x_2\right)^2.
\end{aligned}
\end{equation*}
The exact solution $u(x_1,x_2,t)= t^{\alpha+1}x_1^2(1-x_1)^2x_2^2(1-x_2)^2$.

}
\end{example}

\begin{table}[t]%
\centering
\tabcolsep=5pt
    \caption{Results of GMRES method with and without preconditioner for Example \ref{ex_2} with $\mu=\frac{1}{2^8}$.}
    \label{table3}
    \begin{tabular}{ccccccc}
  \toprule
   \multirow{2}{*}{$(\alpha,\beta_1,\beta_2)$}  & \multirow{2}{*}{$h$}  &  \multicolumn{2}{c}{GMRES} &  \multicolumn{3}{c}{PGMRES}\\  \cmidrule(r){3-4} \cmidrule(r){5-7} 
     &     &CPU(s)  &Iter &Error &CPU(s)  &Iter \\
\hline
  \multirow{4}{*}{(0.2,1.2,1.2)}
        
         &$\frac{1}{32}$   &4.62    &52    &4.0150e-6  &4.16  &8       \\
         &$\frac{1}{64}$   &17.04   &80    &9.6574e-7  &6.73  &8        \\
         &$\frac{1}{128}$  &126.67  &147   &2.3463e-7  &18.06 &8        \\
\hline
  \multirow{4}{*}{(0.2,1.5,1.5)}                                                                                               
              
         &$\frac{1}{32}$   &6.29    &73    &6.0992e-6  &3.68  &7       \\
         &$\frac{1}{64}$   &38.09   &149   &1.4586e-6  &5.97  &7       \\
         &$\frac{1}{128}$  &371.25  &439   &3.5102e-7  &16.76 &7        \\     
\hline
  \multirow{4}{*}{(0.2,1.8,1.8)}                                                                                               
              
         &$\frac{1}{32}$   &9.76    &115    &9.4207e-6  &3.29  &6       \\
         &$\frac{1}{64}$   &89.86   &426    &2.2892e-6  &5.32  &6        \\
         &$\frac{1}{128}$  &1114.53 &1350   &5.5708e-7  &14.39 &6        \\
\hline
  \multirow{4}{*}{(0.2,1.2,1.8)}                                                                                               
              
         &$\frac{1}{32}$   &10.98   &127    &7.8514e-6  &3.39  &7       \\
         &$\frac{1}{64}$   &70.40   &334    &1.9018e-6  &6.02  &7        \\
         &$\frac{1}{128}$  &788.82  &942    &4.6177e-7  &16.23 &7        \\
\hline
  \multirow{4}{*}{(0.5,1.2,1.2)}                                                                                               
          
         &$\frac{1}{32}$   &5.62    &65    &3.9312e-6  &5.84  &13       \\
         &$\frac{1}{64}$   &20.76   &98    &9.6292e-7  &10.20 &13        \\
         &$\frac{1}{128}$  &125.72  &155   &2.5035e-7  &27.34 &13        \\
\hline
  \multirow{4}{*}{(0.5,1.5,1.5)}                                                                                           
         &$\frac{1}{32}$   &8.14    &94    &5.9928e-6  &5.27  &11       \\
         &$\frac{1}{64}$   &36.81   &176   &1.4451e-6  &8.58  &11        \\
         &$\frac{1}{128}$  &349.84  &419   &3.5889e-7  &25.15 &12        \\
         \hline
  \multirow{4}{*}{(0.5,1.8,1.8)}                                                                                 
           
         &$\frac{1}{32}$   &11.07    &129    &9.2922e-6  &4.63  &10       \\
         &$\frac{1}{64}$   &86.29    &415    &2.2656e-6  &8.01  &10        \\
         &$\frac{1}{128}$  &1099.47  &1313   &5.5855e-7  &21.42 &10        \\
         \hline
\multirow{4}{*}{(0.5,1.2,1.8)}                                                                                 
           
         &$\frac{1}{32}$   &11.38   &132    &7.7118e-6  &5.30  &11       \\
         &$\frac{1}{64}$   &67.35   &323    &1.8786e-6  &8.68  &11        \\
         &$\frac{1}{128}$  &738.71  &908    &4.6625e-7  &23.63 &11        \\
         \hline
  \multirow{4}{*}{(0.8,1.2,1.2)}                                                                                 
             
         &$\frac{1}{32}$   &16.13   &179    &4.1081e-6  &13.17  &29       \\
         &$\frac{1}{64}$   &49.04   &233    &1.2678e-6  &21.48  &29        \\
         &$\frac{1}{128}$  &262.46  &315    &5.8475e-7  &58.45  &29        \\
         \hline
  \multirow{4}{*}{(0.8,1.5,1.5)}                                                                                 
           
         &$\frac{1}{32}$   &16.45   &195    &6.0331e-6  &12.02  &26       \\
         &$\frac{1}{64}$   &64.71   &306    &1.6316e-6  &19.71  &26        \\
         &$\frac{1}{128}$  &421.90  &507    &5.7903e-7  &55.67  &26        \\
         \hline
  \multirow{4}{*}{(0.8,1.8,1.8)}                                                                                 
              
         &$\frac{1}{32}$   &19.38    &228    &9.2264e-6  &10.82  &23       \\
         &$\frac{1}{64}$   &102.77   &485    &2.3635e-6  &17.58  &23        \\
         &$\frac{1}{128}$  &1018.57  &1229   &6.9569e-7  &47.94  &23        \\
 \hline
  \multirow{4}{*}{(0.8,1.2,1.8)}                                                                                 
              
         &$\frac{1}{32}$   &19.20   &230    &7.6827e-6  &11.51  &25       \\
         &$\frac{1}{64}$   &85.14   &408    &2.0320e-6  &19.04  &25        \\
         &$\frac{1}{128}$  &725.03  &912    &6.6318e-7  &51.84  &25        \\
\bottomrule
    \end{tabular}
\end{table}

\begin{table}[t]%
\centering
\tabcolsep=5pt
    \caption{Results of GMRES method with and without preconditioner for Example \ref{ex_2} with $h=h_1=h_2=\frac{1}{2^8}$.}
    \label{table4}
    \begin{tabular}{ccccccc}
  \toprule
   \multirow{2}{*}{$(\alpha,\beta_1,\beta_2)$}  & \multirow{2}{*}{$\mu$}  &  \multicolumn{2}{c}{GMRES} &  \multicolumn{3}{c}{PGMRES}\\  \cmidrule(r){3-4} \cmidrule(r){5-7} 
     &     &CPU(s)  &Iter &Error &CPU(s)  &Iter \\
\hline
  \multirow{4}{*}{(0.2,1.2,1.2)}
        
         &$\frac{1}{16}$   &47.24   &295   &7.1875e-7  &3.04  &8       \\
         &$\frac{1}{32}$   &102.47  &295   &3.4814e-7  &6.67  &8        \\
         &$\frac{1}{64}$   &225.20  &295   &1.6806e-7  &13.12 &8        \\
\hline
  \multirow{4}{*}{(0.2,1.5,1.5)}                                                                                               
              
         &$\frac{1}{16}$   &185.16  &1104    &5.1729e-7   &2.77   &7      \\      
         &$\frac{1}{32}$   &386.09  &1104    &2.5255e-7   &6.00   &7       \\     
         &$\frac{1}{64}$   &831.08  &1104    &1.2296e-7   &13.41  &8       \\     
\hline
  \multirow{4}{*}{(0.2,1.8,1.8)}                                                                                               
              
         &$\frac{1}{16}$  &705.36   &4390   &3.5758e-7  &2.43  &6        \\      
         &$\frac{1}{32}$  &1522.21  &4389   &1.7564e-7  &5.35  &6         \\     
         &$\frac{1}{64}$  &3361.46  &4388   &1.3923e-7  &10.71 &6        \\
\hline
  \multirow{4}{*}{(0.2,1.2,1.8)}                                                                                               
              
         &$\frac{1}{16}$  &500.64   &3134   &4.8155e-7  &2.77  &7        \\      
         &$\frac{1}{32}$  &1093.19  &3133   &2.3548e-7  &5.99  &7         \\     
         &$\frac{1}{64}$  &2188.51  &3133   &1.1723e-7  &12.32 &7        \\
\hline
  \multirow{4}{*}{(0.5,1.2,1.2)}                                                                                               
          
         &$\frac{1}{16}$  &42.50  &271   &3.9406e-6  &3.87  &10        \\      
         &$\frac{1}{32}$  &93.13  &269   &1.6985e-6  &8.29  &10         \\     
         &$\frac{1}{64}$  &201.33 &268   &7.1218e-7  &17.99 &11        \\
\hline
  \multirow{4}{*}{(0.5,1.5,1.5)}                                                                                           
         &$\frac{1}{16}$  &162.64  &1054   &3.0106e-6  &3.60   &9        \\      
         &$\frac{1}{32}$  &361.95  &1052   &1.3368e-6  &7.58   &9         \\     
         &$\frac{1}{64}$  &785.64  &1051   &5.7808e-7  &16.66  &10        \\
         \hline
  \multirow{4}{*}{(0.5,1.8,1.8)}                                                                                 
           
         &$\frac{1}{16}$  &678.71   &4271  &2.1875e-6  &2.96  &7        \\      
         &$\frac{1}{32}$  &1441.18  &4265  &1.0003e-6  &6.86  &8         \\     
         &$\frac{1}{64}$  &3190.21  &4263  &4.4670e-7  &13.83 &8        \\
         \hline
\multirow{4}{*}{(0.5,1.2,1.8)}                                                                                 
           
         &$\frac{1}{16}$  &470.73  &3010   &2.8386e-6  &3.49   &9        \\      
         &$\frac{1}{32}$  &996.69  &3004   &1.2695e-6  &7.59   &9         \\     
         &$\frac{1}{64}$  &2239.71 &3001   &5.5294e-7  &17.17  &10        \\
         \hline
  \multirow{4}{*}{(0.8,1.2,1.2)}                                                                                 
             
         &$\frac{1}{16}$  &40.85   &253   &1.1673e-5  &4.87   &14        \\      
         &$\frac{1}{32}$  &103.07  &293   &5.1970e-6  &12.99  &17         \\     
         &$\frac{1}{64}$  &248.44  &323   &2.2946e-6  &35.19  &21        \\
         \hline
  \multirow{4}{*}{(0.8,1.5,1.5)}                                                                                 
           
         &$\frac{1}{16}$  &159.37  &951   &8.3808e-6  &4.25  &12        \\      
         &$\frac{1}{32}$  &340.04  &929   &3.7688e-6  &11.50 &15         \\     
         &$\frac{1}{64}$  &695.05  &915   &1.6741e-6  &27.81 &18        \\
         \hline
  \multirow{4}{*}{(0.8,1.8,1.8)}                                                                                 
              
         &$\frac{1}{16}$  &635.93   &4046   &5.8496e-6  &3.95  &11        \\      
         &$\frac{1}{32}$  &1325.63  &4013   &2.6629e-6  &10.18 &13         \\     
         &$\frac{1}{64}$  &2941.60  &3995   &1.1917e-6  &25.02 &16        \\
 \hline
  \multirow{4}{*}{(0.8,1.2,1.8)}                                                                                 
              
         &$\frac{1}{16}$  &439.53  &2761   &7.8313e-6  &4.57  &12        \\      
         &$\frac{1}{32}$  &939.24  &2716   &3.5261e-6  &11.69 &15         \\     
         &$\frac{1}{64}$  &2008.58 &2692   &1.5690e-6  &27.78 &18        \\
\bottomrule
    \end{tabular}
\end{table}

In Example \ref{ex_2}, the fractional centered difference formula \cite{CD2} is adopted to discretize the multi-dimensional Riesz derivative. We list errors, iteration numbers and CPU times in Tables \ref{table3} and \ref{table4} for different choices of $\alpha, \beta_1$ and $\beta_2$. Obviously, for each case, the number of iterations and CPU time of GMRES method can be greatly reduced when the proposed preconditioner is used. Furthermore, for fixed $\alpha, \beta_1$ and $\beta_2$, no matter how $N$ and $J$ change, the number of iterations changes slightly when matrix sizes change, especially when $\alpha$ tends to zero, with small and nearly constant iteration numbers.

\begin{example} \label{ex_3}
{\rm

Consider the problem (\ref{main_problem}) with $\mathcal{L}=\sum_{i=1}^d \left(k_{i,+} \frac{\partial^{\beta_i}}{\partial_{+} x_i^{\beta_i}}+k_{i,-}\frac{\partial^{\beta_i}}{\partial_{-} x_i^{\beta_i}}\right)$, $d=2$, $T=1$, $\Omega=(0,1)^2$, $k_{1,+}=0.4$, $k_{1,-}=0.7$, $k_{2,+}=1.2$, $k_{2,-}=1.5$, $T=1$. The source term is
\begin{equation*}
\begin{aligned}
f(x_1, x_2, t)= & \frac{\Gamma(\alpha+3)}{2} t^2 x_1^4\left(1-x_1\right)^4 x_2^4\left(1-x_2\right)^4\\
&- t^{2+\alpha} \left\{  \left[k_{1,+} g(x_1,\alpha) + k_{1,-} g(1-x_1,\alpha)\right] x_2^4\left(1-x_2\right)^4\right. \\
&\left. + \left[k_{2,+} g(x_2,\beta) + k_{1,-} g(1-x_2,\beta)\right]x_1^4\left(1-x_1\right)^4  \right\}
\end{aligned}
\end{equation*}
with $g(\psi,\gamma)=\sum_{k=0}^{4}(-1)^{k} C_{4}^{k} \frac{\Gamma(9-k)}{\Gamma(9-k-\gamma)} \psi^{8-k-\gamma}$ and the exact solution $u(x_1,x_2,t)=t^{\alpha+2}x_1^4(1-x_1)^4x_2^4(1-x_2)^4$.

}
\end{example}

\begin{table}[t]%
\centering
\tabcolsep=5pt
    \caption{Results of GMRES method with and without preconditioner for Example \ref{ex_3} with $\mu=\frac{1}{2^8}$.}
    \label{table5}
    \begin{tabular}{ccccccc}
  \toprule
   \multirow{2}{*}{$(\alpha,\beta_1,\beta_2)$}  & \multirow{2}{*}{$h$}  &  \multicolumn{2}{c}{GMRES} &  \multicolumn{3}{c}{PGMRES}\\  \cmidrule(r){3-4} \cmidrule(r){5-7} 
     &     &CPU(s)  &Iter &Error &CPU(s)  &Iter \\
\hline
  \multirow{4}{*}{(0.2,1.2,1.2)}
        
         &$\frac{1}{32}$   &6.57    &67    &9.4542e-8  &7.87  &16       \\
         &$\frac{1}{64}$   &23.98   &111   &2.4070e-8  &13.11 &17        \\
         &$\frac{1}{128}$  &191.54  &235   &6.0776e-9  &35.70 &17       \\
\hline
  \multirow{4}{*}{(0.2,1.5,1.5)}                                                                                               
              
         &$\frac{1}{32}$   &7.02    &80    &9.4644e-8  &5.04  &10       \\
         &$\frac{1}{64}$   &37.08   &177   &2.3893e-8  &8.31  &10       \\
         &$\frac{1}{128}$  &321.73  &393   &6.0132e-9  &22.04 &10        \\     
\hline
  \multirow{4}{*}{(0.2,1.8,1.8)}                                                                                               
              
         &$\frac{1}{32}$   &11.09  &128    &7.6330e-8  &3.85  &7       \\
         &$\frac{1}{64}$   &79.59  &376    &1.9146e-8  &6.31  &7        \\
         &$\frac{1}{128}$  &883.68 &1102   &4.7980e-9  &16.50 &7        \\
\hline
  \multirow{4}{*}{(0.2,1.2,1.8)}                                                                                               
              
         &$\frac{1}{32}$   &11.40   &131    &6.5749e-8  &5.35  &11       \\
         &$\frac{1}{64}$   &69.93   &335    &1.6566e-8  &9.83  &12        \\
         &$\frac{1}{128}$  &707.02  &884    &4.1597e-9  &27.67 &13        \\
\hline
  \multirow{4}{*}{(0.5,1.2,1.2)}                                                                                               
          
         &$\frac{1}{32}$   &6.31    &72    &9.1469e-8  &8.14  &17       \\
         &$\frac{1}{64}$   &24.62   &117   &2.3574e-8  &13.35 &17        \\
         &$\frac{1}{128}$  &169.51  &208   &6.2520e-9  &37.40 &18        \\
\hline
  \multirow{4}{*}{(0.5,1.5,1.5)}                                                                                           
         &$\frac{1}{32}$   &7.17    &81    &9.3687e-8  &5.60  &11       \\
         &$\frac{1}{64}$   &35.77   &170   &2.3760e-8  &9.06  &11        \\
         &$\frac{1}{128}$  &314.21  &379   &6.0867e-9  &23.94 &11        \\
         \hline
  \multirow{4}{*}{(0.5,1.8,1.8)}                                                                                 
           
         &$\frac{1}{32}$   &11.09    &127    &7.6007e-8  &4.38  &9       \\
         &$\frac{1}{64}$   &77.24    &370    &1.9117e-8  &7.60  &9        \\
         &$\frac{1}{128}$  &896.11   &1089   &4.8436e-9  &20.21 &9        \\
         \hline
\multirow{4}{*}{(0.5,1.2,1.8)}                                                                                 
           
         &$\frac{1}{32}$   &11.03   &126    &6.5294e-8  &5.51  &11       \\
         &$\frac{1}{64}$   &68.19   &325    &1.6529e-8  &9.70  &12        \\
         &$\frac{1}{128}$  &712.54  &852    &4.2275e-9  &27.82 &13        \\
         \hline
  \multirow{4}{*}{(0.8,1.2,1.2)}                                                                                 
             
         &$\frac{1}{32}$   &15.06   &174    &9.0732e-8  &16.33  &34       \\
         &$\frac{1}{64}$   &47.64   &225    &2.6717e-8  &26.01  &34        \\
         &$\frac{1}{128}$  &264.59  &317    &1.0438e-8  &72.41  &34        \\
         \hline
  \multirow{4}{*}{(0.8,1.5,1.5)}                                                                                 
           
         &$\frac{1}{32}$   &10.56   &122    &9.3658e-8  &11.56  &24       \\
         &$\frac{1}{64}$   &42.87   &202    &2.4968e-8  &18.93  &24        \\
         &$\frac{1}{128}$  &324.21  &389    &7.6036e-9  &50.53  &24        \\
         \hline
  \multirow{4}{*}{(0.8,1.8,1.8)}                                                                                 
              
         &$\frac{1}{32}$   &11.82   &135    &7.6215e-8  &8.81   &18       \\
         &$\frac{1}{64}$   &75.87   &359    &1.9769e-8  &14.08  &18        \\
         &$\frac{1}{128}$  &850.05  &1035   &5.6062e-9  &37.32  &18        \\
 \hline
  \multirow{4}{*}{(0.8,1.2,1.8)}                                                                                 
              
         &$\frac{1}{32}$   &12.43   &142    &6.5619e-8  &10.51  &21       \\
         &$\frac{1}{64}$   &64.54   &304    &1.7501e-8  &16.88  &21        \\
         &$\frac{1}{128}$  &669.95  &808    &5.3575e-9  &45.32  &21        \\
\bottomrule
    \end{tabular}
\end{table}

\begin{table}[t]%
\centering
\tabcolsep=5pt
    \caption{Results of GMRES method with and without preconditioner for Example \ref{ex_3} with $h=h_1=h_2=\frac{1}{2^8}$.}
    \label{table6}
    \begin{tabular}{ccccccc}
  \toprule
   \multirow{2}{*}{$(\alpha,\beta_1,\beta_2)$}  & \multirow{2}{*}{$\mu$}  &  \multicolumn{2}{c}{GMRES} &  \multicolumn{3}{c}{PGMRES}\\  \cmidrule(r){3-4} \cmidrule(r){5-7} 
     &     &CPU(s)  &Iter &Error &CPU(s)  &Iter \\
\hline
  \multirow{4}{*}{(0.2,1.2,1.2)}
        
         &$\frac{1}{16}$   &75.63   &479   &4.7580e-9  &6.69  &17       \\
         &$\frac{1}{32}$   &160.07  &479   &2.4953e-9  &12.76 &17        \\
         &$\frac{1}{64}$   &351.72  &479   &1.8118e-9  &27.13 &17        \\
\hline
  \multirow{4}{*}{(0.2,1.5,1.5)}                                                                                               
              
         &$\frac{1}{16}$   &137.34  &862    &2.6299e-7   &3.63   &10      \\      
         &$\frac{1}{32}$   &296.36  &862    &1.8495e-7   &7.64   &10       \\     
         &$\frac{1}{64}$   &643.38  &862    &1.6097e-7   &16.27  &10       \\     
\hline
  \multirow{4}{*}{(0.2,1.8,1.8)}                                                                                               
              
         &$\frac{1}{16}$  &491.99   &3068   &1.7531e-7  &2.70  &7        \\      
         &$\frac{1}{32}$  &1127.18  &3339   &1.3689e-7  &6.00  &7         \\     
         &$\frac{1}{64}$  &2413.29  &3412   &1.2507e-7  &12.60 &7        \\
\hline
  \multirow{4}{*}{(0.2,1.2,1.8)}                                                                                               
              
         &$\frac{1}{16}$  &478.76   &2885   &1.8616e-9  &4.83  &14        \\      
         &$\frac{1}{32}$  &1066.36  &3093   &1.2904e-9  &10.38 &14         \\     
         &$\frac{1}{64}$  &2186.17  &3097   &1.1155e-9  &21.62 &14        \\
\hline
  \multirow{4}{*}{(0.5,1.2,1.2)}                                                                                               
          
         &$\frac{1}{16}$  &66.51  &435   &2.8217e-8  &6.07  &17        \\      
         &$\frac{1}{32}$  &153.31 &431   &1.1109e-8  &12.77 &17         \\     
         &$\frac{1}{64}$  &307.16 &429   &4.8879e-9  &28.39 &18        \\
\hline
  \multirow{4}{*}{(0.5,1.5,1.5)}                                                                                           
         &$\frac{1}{16}$  &134.56  &858   &1.0615e-8  &3.72   &10        \\      
         &$\frac{1}{32}$  &303.53  &858   &4.7991e-9  &7.91   &10         \\     
         &$\frac{1}{64}$  &621.95  &858   &2.6784e-9  &16.43  &10        \\
         \hline
  \multirow{4}{*}{(0.5,1.8,1.8)}                                                                                 
           
         &$\frac{1}{16}$  &496.89   &3277  &5.6702e-9  &2.77  &7        \\      
         &$\frac{1}{32}$  &1214.63  &3393  &2.8194e-9  &5.93  &7         \\     
         &$\frac{1}{64}$  &2234.08  &3116  &1.7789e-9  &13.63 &8        \\
         \hline
\multirow{4}{*}{(0.5,1.2,1.8)}                                                                                 
           
         &$\frac{1}{16}$  &427.54  &2624   &7.7390e-9  &5.01    &14        \\      
         &$\frac{1}{32}$  &967.92  &2682   &3.4586e-9  &10.64   &14         \\     
         &$\frac{1}{64}$  &2234.40 &3144   &1.9007e-9  &22.11   &14        \\
         \hline
  \multirow{4}{*}{(0.8,1.2,1.2)}                                                                                 
             
         &$\frac{1}{16}$  &52.86   &327   &1.4094e-7  &7.17   &20        \\      
         &$\frac{1}{32}$  &117.92  &341   &6.2993e-8  &18.39  &23         \\     
         &$\frac{1}{64}$  &276.53  &374   &2.8335e-8  &40.03  &26        \\
         \hline
  \multirow{4}{*}{(0.8,1.5,1.5)}                                                                                 
           
         &$\frac{1}{16}$  &128.35  &822   &4.9701e-8  &4.39  &12        \\      
         &$\frac{1}{32}$  &286.18  &820   &2.2754e-8  &10.47 &14         \\     
         &$\frac{1}{64}$  &596.25  &819   &1.0793e-8  &25.97 &17        \\
         \hline
  \multirow{4}{*}{(0.8,1.8,1.8)}                                                                                 
              
         &$\frac{1}{16}$  &468.08   &3036   &2.4875e-8  &3.45  &9        \\      
         &$\frac{1}{32}$  &1078.39  &3166   &1.1642e-8  &7.78  &10         \\     
         &$\frac{1}{64}$  &1863.26  &2569   &5.7706e-9  &20.32 &13        \\
 \hline
  \multirow{4}{*}{(0.8,1.2,1.8)}                                                                                 
              
         &$\frac{1}{16}$  &472.44  &2871   &3.6579e-8  &5.00  &14        \\      
         &$\frac{1}{32}$  &869.15  &2434   &1.6704e-8  &11.11 &15         \\     
         &$\frac{1}{64}$  &1878.46 &2628   &7.8867e-9  &23.08 &15        \\
\bottomrule
    \end{tabular}
\end{table}

The WSGD formula \cite{tian2015class} is employed for discretizing the multi-dimensional Riemann-Liouville derivative in Example \ref{ex_3}. Tables \ref{table5} and \ref{table6} show the results derived by GMRES and PGMRES methods for different values of $\alpha, \beta_1$ and $\beta_2$. Clearly, for all cases, the CPU times and the numbers of iterations have been reduced significantly when the proposed preconditioning strategy is used. Moreover, for fixed $\alpha, \beta_1,$ and $\beta_2$, the number of iterations is small and changes slightly, especially when $\alpha, \beta_1$ and $\beta_2$ are away from 1. This behavior demonstrates the efficiency and robustness of the proposed preconditioner, in agreement with the theoretical results presented in Theorem \ref{theorem_main}.

\section{Conclusions}\label{sec:conclusions}
In this paper, we initially introduce an ideal preconditioner for a class of nonsymmetric multilevel Toeplitz matrices generated by functions with essentially positive real parts. To illustrate the applicability of our proposed preconditioning approach, we specifically considered solving the all-at-once nonsymmetric multilevel Toeplitz systems derived from a broad spectrum of non-local evolutionary partial differential equations. Building upon this foundation, we then propose a novel practical PinT preconditioner based on Tau matrices. Our analysis demonstrates that the GMRES solver, when applied to these preconditioned systems, achieves an optimal convergence rate—a convergence rate independent of discretization stepsizes. Numerical experiments for the solution of various evolutionary PDEs, characterized by a small and stable number of iterations, substantiate the efficiency, robustness, and wide applicability of the proposed preconditioning strategy.

\section*{Acknowledgments}
The work of Sean Y. Hon was supported in part by the Hong Kong RGC under grant 22300921 and a start-up grant from the Croucher Foundation. The work of Siu-Long Lei was supported by the research grants MYRG2022-00262-FST and MYRG-GRG2023-00181-FST-UMDF from University of Macau.

\bibliographystyle{plain}

\begin{thebibliography}{10}


\bibitem{bateman1953higher}
H. Bateman and A. Erd{\'e}lyi, 
\newblock{Higher transcendental functions, volume II}. 
\newblock{Bateman Manuscript Project}, Mc Graw-Hill Book Company, 1953.

\bibitem{Bini1990}
D. Bini and F. Benedetto,
\newblock {A new preconditioner for the parallel solution of positive definite Toeplitz systems},
\newblock  Proc. 2nd Ann. ACM Sympos. on Parallel Algorithms and Architecture (SPAA 90), 220--223, 1990.

\bibitem{BC83}
D. Bini and M. Capovani,
\newblock {Spectral and computational properties of band symmetric Toeplitz matrices},
\newblock {Linear Algebra Appl.}, 52:99--126, 1983.
		


\bibitem{MR2376196}
R. H.-F. Chan and X.-Q. Jin,
\newblock {An introduction to iterative {T}oeplitz solvers},
\newblock SIAM, 2007.



\bibitem{Chan:1996:CGM:240441.240445}
R. H. Chan and M. K. Ng,
\newblock {Conjugate gradient methods for Toeplitz systems},
\newblock {SIAM Review}, 38(3):427--482, 1996.


\bibitem{chen2009finite}
S. Chen, F. Liu, P. Zhuang, and V. Anh, Finite difference approximations for
  the fractional Fokker--Planck equation,
\newblock Appl. Math. Model., 33(1):256--273, 2009.

\bibitem{CD2}
C.~\c{C}elik and M.~Duman,
\newblock {Crank-Nicolson} method for the fractional diffusion equation with
  the {Riesz} fractional derivative,
\newblock J. Comput. Phys., 231(4):1743--1750, 2012.

\bibitem{ding2023high}
H. F. Ding and C. P. Li,
\newblock High-order numerical algorithm and error analysis for the
  two-dimensional nonlinear spatial fractional complex Ginzburg--Landau
  equation,
\newblock Commun. Nonlinear Sci. Numer. Simul.,
  120:107160, 2023.
  
\bibitem{ElmanSilvesterWathen2014}
H. Elman, D. Silvester, and A. Wathen,
\newblock {{Finite Elements and Fast Iterative Solvers: with Applications in Incompressible Fluid Dynamics (2nd ed.)}},
\newblock Oxford University Press, New York, 2014.

\bibitem{FungHon_2024}
P. Y. Fung and S. Hon,
\newblock {Block $\omega$-circulant preconditioners for parabolic optimal control problems},
\newblock {arXiv e-prints}, arXiv:2406.00952, 2024.



\bibitem{gan2024efficient}
D. Gan, G.-F. Zhang and Z.-Z. Liang, An efficient preconditioner for linear
  systems arising from high-order accurate schemes of time fractional diffusion
  equations,
\newblock J. Appl. Math. Comput., 1--23, 2024.

\bibitem{GaroniCapizzano_two}
C. Garoni and S. Serra-Capizzano,
\newblock {Generalized locally {T}oeplitz sequences: theory and applications. {V}ol. {II}},
\newblock Springer, Cham, 2018.
    

\bibitem{hao2015fourth}
Z.-P. Hao, Z.-Z. Sun, and W.-R. Cao, A fourth-order approximation of fractional
  derivatives with its applications,
\newblock J. Comput. Phys., 281:787--805, 2015.


\bibitem{Hon_SC_Wathen}
S. Hon, S. Serra-Capizzano, and A. Wathen,
\newblock {Band-Toeplitz preconditioners for ill-conditioned Toeplitz systems},
\newblock {BIT Numer. Math.}, 62(2):465--491, 2022.

\bibitem{hondongSC2023}
S. Hon, J. Dong, and S. Serra-Capizzano,
\newblock {A preconditioned MINRES method for optimal control of wave equations and its asymptotic spectral distribution theory}, 
\newblock{SIAM J. Matrix Anal. Appl.}, 44(4):1477--1509, 2023.


\bibitem{hon2024sine}
S. Hon, P. Y. Fung, J. Dong, and S. Serra-Capizzano,
\newblock A sine transform based
  preconditioned MINRES method for all-at-once systems from constant and
  variable-coefficient evolutionary PDEs,
\newblock Numer. Algor., 95(4):1769--1799, 2024.


\bibitem{huang2022preconditioners}
X. Huang, D. Li, H.-W. Sun, and F. Zhang, Preconditioners with symmetrized
  techniques for space fractional Cahn-Hilliard equations,
\newblock J. Sci. Comput., 92(2):41, 2022.

\bibitem{Huang_2022}
X. Huang, X.-L. Lin, M. K. Ng, and H.-W. Sun, 
\newblock{Spectral analysis for preconditioning of multi-dimensional Riesz fractional diffusion equations},
\newblock{Numer. Math. Theor. Meth. Appl.}, 15(3):565--591, 2022.

\bibitem{huang2024efficient}
Y. Y. Huang, P. Y.  Fung, S. Y. Hon, and X.-L. Lin, An efficient preconditioner for
  evolutionary partial differential equations with $\theta$-method in time
  discretization,
\newblock arXiv preprint, arXiv:2408.03535, 2024.

\bibitem{huang2023tau}
Y. Y. Huang, W. Qu, and S.-L. Lei, On $\tau$-preconditioner for a novel
  fourth-order difference scheme of two-dimensional Riesz space-fractional
  diffusion equations,
\newblock Comput. Math. Appl., 145:124--140, 2023.

\bibitem{jin2016analysis}
B. Jin, R. Lazarov, and Z. Zhou, An analysis of the L1 scheme for the
  subdiffusion equation with nonsmooth data,
\newblock IMA J. Numer. Anal., 36(1):197--221, 2016.

\bibitem{jin2010preconditioning}
X.-Q. Jin, Preconditioning techniques for {T}oeplitz systems,
\newblock Higher Education Press, Beijing, 2010.

\bibitem{LiLinHonWu2024}
C. Li, X. Lin, S. Hon, and S.-L. Wu,
\newblock{A preconditioned MINRES method for block lower triangular Toeplitz systems}, 
\newblock{J. Sci. Comput.}, 100(3):63, 2024.

\bibitem{li2023preconditioning}
T.-Y. Li, F. Chen, H.-W. Sun, and T. Sun, Preconditioning technique based on sine
  transformation for nonlocal Helmholtz equations with fractional Laplacian,
\newblock J. Sci. Comput., 97(1):17, 2023.


\bibitem{liao2018sharp}
H.-L. Liao, D. Li, and J. Zhang, Sharp error estimate of the nonuniform L1
  formula for linear reaction-subdiffusion equations,
\newblock SIAM J. Numer Anal., 56(2):1112--1133, 2018.




\bibitem{lin2024single}
X. Lin, J. Dong, and S. Hon, A single-sided all-at-once preconditioning for
  linear system from a non-local evolutionary equation with weakly singular
  kernels,
\newblock Comput. Math. Appl., 169:1--16, 2024.

\bibitem{lin2018separable}
 X. Lin, M. K. Ng, and H. Sun, A separable preconditioner for time-space
  fractional Caputo-Riesz diffusion equations,
\newblock Numer. Math. Theor. Meth. Appl., 11:827--853, 2018.

\bibitem{lin2023tau}
X.-L. Lin, X. Huang, M. K. Ng, and H.-W. Sun, A $\tau$-preconditioner for a
  non-symmetric linear system arising from multi-dimensional Riemann-Liouville
  fractional diffusion equation,
\newblock Numer. Algor., 92(1):795--813, 2023.

\bibitem{lin2019fast}
X.-L. Lin and M. K. Ng, A fast solver for multidimensional time--space fractional
  diffusion equation with variable coefficients,
\newblock Comput. Math. Appl., 78(5):1477--1489, 2019.

\bibitem{lin2024matching}
X. L. Lin and M. K. Ng, A matching Schur complement preconditioning technique for
inverse source problems,
\newblock Appl. Numer. Math., 201:404--418, 2024.


\bibitem{lin2021all}
X.-L. Lin and M. K. Ng, An all-at-once preconditioner for evolutionary partial
  differential equations,
\newblock SIAM J. Sci. Comput., 43(4):A2766--A2784, 2021.

\bibitem{lin2021parallel}
X.-L. Lin, M. K. Ng, and Y. Zhi,
\newblock{ A parallel-in-time two-sided preconditioning for all-at-once system from a non-local evolutionary equation with weakly singular kernel},
\newblock J. Comput. Phys., 434:110221, 2021.


\bibitem{Lin_etc_2023}
X.-L. Lin, X. Huang, M. K. Ng, and H.-W. Sun, 
\newblock{A $\tau$-preconditioner for a non-symmetric linear system arising from multi-dimensional Riemann-Liouville fractional diffusion equation},
\newblock{Numer. Algor.}, 92:795–-813, 2023.



\bibitem{lin2007finite}
Y. Lin and C. Xu, Finite difference/spectral approximations for the
  time-fractional diffusion equation,
\newblock J. Comput. Phys., 225(2):1533--1552, 2007.

\bibitem{mazza2021symbol}
M. Mazza, S. Serra-Capizzano, and M. Usman, Symbol-based preconditioning for
  Riesz distributed-order space-fractional diffusion equations,
\newblock Electron Trans Numer Anal., 54:499–513, 2021.



\bibitem{Meerschaert2006}
M. M. Meerschaert and C. Tadjeran,
\newblock {Finite difference approximations for two-sided space-fractional partial differential equations},
    \newblock {Appl. Numer. Math.}, 56(1):80--90, 2006.

\bibitem{meerschaert2004finite}
M. M. Meerschaert and C. Tadjeran, Finite difference approximations for
  fractional advection--dispersion flow equations,
\newblock J. Comput. Appl. Math., 172(1):65--77, 2004.

\bibitem{ng2004iterative}
M. K. Ng, Iterative methods for Toeplitz systems,
\newblock Oxford University Press, New York, 2004.


\bibitem{pang2016fast}
H.-K. Pang and H.-W. Sun, Fast numerical contour integral method for fractional
  diffusion equations,
\newblock J. Sci. Comput., 66:41--66, 2016.

\bibitem{pang2016fourth}
H.-K. Pang and H.-W. Sun, Fourth order finite difference schemes for time--space
  fractional sub-diffusion equations,
\newblock Comput. Math. Appl., 71(6):1287--1302, 2016.



\bibitem{Pestana2019}
J. Pestana,
\newblock {Preconditioners for symmetrized Toeplitz and multilevel Toeplitz matrices}.
\newblock {SIAM J. Matrix Anal. Appl.}, 40(3):870-887, 2019.


\bibitem{doi:10.1137/140974213}
J. Pestana and A. Wathen,
\newblock {A preconditioned MINRES method for nonsymmetric Toeplitz matrices}.
\newblock {SIAM J. Matrix Anal. Appl.}, 36(1):273-288, 2015.

\bibitem{qu2024novel}
W. Qu, Y.-Y. Huang, S. Hon and S.-L. Lei,
\newblock {A novel fourth-order scheme for two-dimensional Riesz space fractional nonlinear reaction-diffusion equations and its optimal preconditioned solver},
\newblock {arXiv preprint, arXiv:2405.03143}, 2024.

\bibitem{Serra_94}
S. Serra,
\newblock {Preconditioning strategies for asymptotically ill-conditioned block Toeplitz systems},
\newblock {BIT Numer. Math.}, 34:579–594, 1994.



\bibitem{SL1}
E. Sousa and C. Li,
\newblock A weighted finite difference method for the fractional diffusion
  equation based on the {Riemann--Liouville} derivative,
\newblock Appl. Numer. Math., 90:22--37, 2015.

\bibitem{sun2006fully}
 Z.-Z. Sun and X. Wu, A fully discrete difference scheme for a diffusion-wave
  system,
\newblock Appl. Numer. Math., 56(2):193--209, 2006.


\bibitem{tang2024fast}
S.-P. Tang and Y.-M. Huang, A fast preconditioning iterative method for solving
  the discretized second-order space-fractional advection--diffusion equations,
\newblock J. Comput. Appl. Math., 438:115513, 2024.


\bibitem{tian2015class}
W. Tian, H. Zhou, and W. Deng, A class of second order difference approximations
  for solving space fractional diffusion equations.
\newblock Math. Comput., 84(294):1703--1727, 2015.

\bibitem{vong2019second}
S. Vong and P. Lyu, On a second order scheme for space fractional diffusion
  equations with variable coefficients,
\newblock Appl. Numer. Math., 137:34--48, 2019.


\bibitem{vong2016high}
S. Vong, P. Lyu, X. Chen, and S.-L. Lei, High order finite difference method for
  time-space fractional differential equations with Caputo and
  Riemann-Liouville derivatives,
\newblock Numer. Algor., 72:195--210, 2016.

\bibitem{zhang2023fast}
C.-H. Zhang, J.-W. Yu, and X. Wang, A fast second-order scheme for nonlinear Riesz
  space-fractional diffusion equations,
\newblock Numer. Algor., 92(3):1813--1836, 2023.

\bibitem{zhang2011alternating}
Y.-N. Zhang and Z.-Z. Sun, Alternating direction implicit schemes for the
  two-dimensional fractional sub-diffusion equation,
\newblock J. Comput. Phys., 230(24):8713--8728, 2011.



\bibitem{zhu2024highly}
H. Zhu and C. Xu, A highly efficient numerical method for the time-fractional
  diffusion equation on unbounded domains,
\newblock J. Sci. Comput., 99(2):47, 2024.


\end{thebibliography}

\end{document}